\documentclass{amsart}
\input{xy}
\xyoption{poly}
\xyoption{2cell}
\xyoption{all}
\usepackage{bm}
\usepackage{float}
\usepackage{longtable}
\usepackage{multicol}
\usepackage{amssymb,latexsym,xcolor}
\usepackage{amsmath,amsthm}
\usepackage[utf8]{inputenc}
\usepackage[english]{babel}
\usepackage{amsrefs}
\usepackage{tkz-euclide}
\usepackage[mathscr]{eucal}
\usepackage[all]{xy}
\usepackage{tikz-cd}
\usepackage{float} 
\usepackage{tikz,pgf}
\usetikzlibrary{matrix}
\usetikzlibrary{shapes.geometric,calc}
\usetikzlibrary{patterns}
\usepackage{adjustbox}
\usepackage{stackrel}
\usepackage{hyperref}
\usepackage{extarrows}

\usepackage{color}
\usepackage{rotating}
\newtheorem{theorem}{Theorem}[section]
\newtheorem{lemma}[theorem]{Lemma}
\newtheorem{proposition}[theorem]{Proposition}
\newtheorem{corollary}[theorem]{Corollary}
\theoremstyle{definition}
\newtheorem{definition}[theorem]{Definition}
\newtheorem{example}[theorem]{Example}

\theoremstyle{remark}
\newtheorem{remark}[theorem]{Remark}
\numberwithin{equation}{section}

\DeclareMathOperator{\rep}{rep}
\DeclareMathOperator{\ind}{ind}
\DeclareMathOperator{\dm}{\underline{cdim}}
\DeclareMathOperator{\supp}{supp}
\DeclareMathOperator{\md}{mod}

\DeclareMathOperator{\Hom}{Hom}
\DeclareMathOperator{\Supp}{supp}
\DeclareMathOperator{\csupp}{csupp}
\def\P{\mathscr{P}}
\def\Q{\mathscr{Q}}
\def\DD{\mathbb{D}}
\def\AA{\mathbb{A}}
\newcommand{\kO}{\Bbbk}
\newcommand{\kT}{\Bbbk^{2}}
\newcommand{\Ext}{\operatorname{Ext}\nolimits}

\newcommand{\ze}{\epsilon}

\newcommand\colorb[1]{\color{red}{\bm #1}}

\newcommand\smrud[1]{%
  \begingroup\setlength\arraycolsep{0pt}\renewcommand{\arraystretch}{0.5}\Huge\ensuremath{\begin{matrix}  #1\end{matrix}\arrow[rd]\arrow[ru]\endgroup}}

\newcommand\sm[1]{%
  \begingroup\setlength\arraycolsep{0pt}\renewcommand{\arraystretch}{0.5}\Huge\ensuremath{\begin{matrix}#1\end{matrix}\endgroup}}

\newcommand\smru[1]{%
  \begingroup\setlength\arraycolsep{0pt}\renewcommand{\arraystretch}{0.5}\Huge\ensuremath{\begin{matrix}#1\end{matrix}\arrow[ru]\endgroup}}

\newcommand\smrd[1]{%
  \begingroup\setlength\arraycolsep{0pt}\renewcommand{\arraystretch}{0.5}\Huge\ensuremath{\begin{matrix}#1\end{matrix}\arrow[rd]\endgroup}}

\newcommand\smr[1]{%
  \begingroup\setlength\arraycolsep{0pt}\renewcommand{\arraystretch}{0.4}\Huge\ensuremath{\begin{matrix}#1\end{matrix}\arrow[r]\endgroup}}

\begin{document}

\title[A geometric realization of socle-projective categories...]{A geometric realization of socle-projective categories for  posets of type $\mathbb{D}$}


\author{Gabriel Bravo Rios}
\address{Facultad de Ciencias Matemáticas y Naturales, Universidad Distrital Francisco José de Caldas, Bogot\'a 110110.}
\email{gbravor@udistrital.edu.co}

\author{Ralf Schiffler}
\address{Department of Mathematics, University of Connecticut, Storrs, CT 06269-3009, USA}
\email{schiffler@math.uconn.edu}
\thanks{The first author was supported by Universidad Distrital FJC,  Project PPNFF-2025-2060.  The second author was supported  by NSF Grant DMS-2054561, and NSF Grant DMS-2348909. The third author was supported by UPTC, SGI 4196.}

\author{Robinson-Julian Serna}
\address{School of Mathematics and Statistics, Pedagogical and Technological University of Colombia, Tunja, Boyacá 150001.}
\email{robinson.serna@uptc.edu.co}

\keywords{category of diagonals, cluster category, cluster algebra, poset of type $\mathbb{A}$, poset of type $\mathbb{D}$, socle-projective representation, Auslander Reiten quiver.}

\begin{abstract}\textcolor{black}{We introduce posets of type $\DD$, a family of posets described by an admissible Dynkin quiver of type $D_n$ together with a compatible set of extra arrows (alien arrows), and show that their socle-projective representation category is of finite representation type. Continuing, in the same spirit, a program initiated for posets of type $\AA$ [R. Schiffler, R-J. Serna, \textit{A geometric realization of socle-projective categories for posets of type $\AA$}, J. Pure Appl. Algebra 224 (2020), no. 12, 106436], we build on an existing geometric model for cluster-tilted algebras of type $D_n$, whose geometric combinatorics is more intricate than that of type $\AA$. Our main result establishes a $\Bbbk$-linear categorical equivalence $\Theta$ between a full subcategory $(\mathcal C/T)_F$ of the arc category of a punctured $(n+3)$-gon, whose objects are certain $sp$-arcs, and the category of finitely generated socle-projective $\Bbbk\P$-modules, for $\P$ a poset of type $\DD$. As a consequence, when the set of alien arrows is empty, we conclude that the cluster subalgebra generated by the $sp$-arcs coincides with the full cluster algebra.}
\end{abstract}

\maketitle
\tableofcontents
\section{Introduction}

\textcolor{black}{Several authors have developed geometric realizations for studying representation categories, offering a geometric perspective on objects and morphisms through the combinatorial geometry of surfaces. This includes models for cluster categories of surfaces without punctures \cite{BGMS,brustle,BMR,BMRRT,schiffler1,FST,JML}, for $m$-cluster categories of type $\DD_n$ and beyond \cite{karin1,karin2}, for categories of type $\tilde{\AA}$ and morphisms in the infinite radical \cite{karin3}, and for module categories of gentle algebras \cite{karin4}.} In particular, the author of \cite{Schiffler} proposed a geometric model for cluster categories of type $\DD$, based on an $(n+3)$-gon with a puncture at its center. The model uses the category of arcs of the polygon, where the objects are direct sums of homotopy classes of paths between two vertices—referred to as arcs—and the morphisms are given by elementary moves and mesh relations over an algebraically closed field $\Bbbk$. The category of arcs 
is equivalent to the cluster category of type $\DD$ \cite{Schiffler}.\\

From a different perspective, the theory of representations of partially ordered sets (posets) was developed by Nazarova, Roiter, and their students as an algebraic tool for addressing the second Brauer–Thrall conjecture. The central aim of this theory is to describe the indecomposable objects and the irreducible morphisms in the category of all matrix representations of a poset over $\Bbbk$ \cite{nazarova,Nazarova1}. Later, P. Gabriel introduced the notion of a $\Bbbk$-vector space representation of a poset $\mathscr{P}$, assigning to each element a subspace of a fixed vector space in a way that respects the order relation, and showed that this category corresponds to finitely generated socle-projective modules over the incidence algebra $\Bbbk \mathscr{P}^\star$, following the ideas of quiver representation theory \cite{gabriel1}.  In the early 1990s, Simson studied the category of peak spaces of a poset $\mathscr{P}$, characterized by its maximal elements ($r$-peak posets), whose category coincides with that of finitely generated socle-projective modules over the incidence algebra $\Bbbk \mathscr{P}$ \cite{simson4,simson,simson3}. As part of the classification criteria, Kosakowska described all posets and sincere representations of partially ordered sets with $r$ maximal points \cite{justina,justina1,justina2}.\\

As a connection between the  geometric models and representation theory of posets, the second and third author introduced a family of posets, called posets of type $\mathbb{A}$, and described a geometric realization through a subcategory of the category $\mathcal{C}_{T}$ of diagonals of a regular $(n+3)$-gon associated with the triangulation $T$. This subcategory is categorically equivalent to the category of socle-projective representations of posets of type $\mathbb{A}$. Moreover, this was useful for defining certain subalgebras of the cluster algebra of type $\mathbb{A}$ and determining the conditions under which these subalgebras coincide with the entire cluster algebra \cite{schifflerserna}. On the other
hand, inspired by \cite{BGMS}, Iusenko et al. presented in \cite{IusenkoBravoRiosSerna2025} a new geometric realization
of the category of socle-projective representations of posets of type $\AA$. This approach was instrumental in demonstrating that every indecomposable object is stable with respect to a specific weight defined through the geometric model.\\

\textcolor{black}{Our main result (Theorem \ref{main}) establishes a $\Bbbk$-linear categorical equivalence $\Theta$ between a full subcategory $(\mathcal C/T)_F$ of the arc category of a punctured $(n+3)$-gon and the category of finitely generated socle-projective $\Bbbk\P$-modules, for $\P$ a poset of type $\DD$ --- extending to type $\DD$ the geometric model that the second and third author introduced for posets of type $\AA$ in \cite{schifflerserna}, and drawing further inspiration from \cite{Schiffler}. Passing from type $\AA$ to type $\DD$ is not a routine relabeling of \cite{schifflerserna}: the punctured-polygon model of \cite{Schiffler} is governed by a combinatorics of arcs around the puncture that differs substantially from the combinatorics of diagonals used in the type-$\AA$ model, and this is the main new difficulty addressed in this paper.}\\

\textcolor{black}{In this article, we introduce a family of posets called posets of type $\DD$, which can be described from an admissible quiver of type $\DD$ and a set of edges called the set of alien arrows (Definition \ref{defposetypeD}). We prove that for this family of posets, the representation category is of finite representation type (Proposition \ref{finiterepresentation}). We then describe a subcategory of the arc category $(\mathcal{C}/T)_{F}$, whose objects are direct sums of sp-arcs (Definition \ref{sparc}) and whose morphism spaces between two $sp$-arcs are given by the quotient generated by compositions of $sp$-moves and the subspace of $sp$-mesh relations (Section \ref{categorysp}). Here, $T$ is a triangulation of the $(n+3)$-gon with a central puncture corresponding to the admissible Dynkin quiver of type $\DD$. As a natural consequence of the type-$\AA$ argument of \cite[Theorem 5.2]{schifflerserna}, we conclude that, when the set of alien arrows is empty, the cluster subalgebra generated by the $sp$-arcs coincides with the full cluster algebra (Corollary \ref{clusterprop}).}\\

This paper is organized as follows. In Section \ref{section2}, we introduce the notation and basic definitions used throughout the paper, such as the arc category with a puncture, the category of socle-projective modules over a poset, and posets of type $\AA$. In Section \ref{posetstypeD}, we define posets of type $\DD$. In Section \ref{equi}, we present the main goal of this paper: the categorical equivalence described in Theorem \ref{main}. Finally, in Appendix \ref{sincereposetsD} we provide the Hasse diagrams of the sincere posets of type $\DD$, named following the numbering and order of \cite[Table 2.2]{justina2}, and in Appendix \ref{sincererepsD} we list their corresponding sincere indecomposable socle-projective representations.

\section{Preliminaries} \label{section2}

\subsection{Category of arcs of a punctured regular $n$-gon}
Let us recall the geometric model for Dynkin quivers of type  $\DD_n$   introduced in \cite{Schiffler}.\\

Let $\Pi_n$ be a regular polygon with $n$ vertices and one puncture in the
center.  Let $\delta_{a,b}$ be an oriented path between two vertices $a \neq b$ on the boundary of $\Pi_n$ in counterclockwise direction, such that
$\delta_{a,b}$ does not run through the same point twice. Also let
$\delta_{a,a}$ be the path that runs from $a$ to $a$, i.e. around
the polygon exactly one time. We define $|\delta_{a,b}|$ to be the
number of vertices on the path $\delta_{a,b}$, including $a$ and $b$.\\ 

An \textit{edge} is a triple $(a,\gamma,b)$ where $a$ and $b$ are vertices
on the boundary of the polygon and $\gamma$ is an oriented path from $a$
to $b$ lying in the interior of $\Pi_n$ and that is homotopic
to $\delta_{a,b}$.  Furthermore, the path should not cross itself and
$|\delta_{a,b}| \geq 3$.  Two edges are equivalent if they start in the
same vertex, end in the same vertex and are homotopic. \\ 

Let $E$ be the set of equivalence classes of edges, and denote by
$M_{a,b}$ the equivalence class of edges in $E$ going from $a$ to
$b$. The set of tagged edges is defined as follows.  
$$\{ M_{a,b}^{\epsilon}|M_{a,b} \in E\text{, } \epsilon \in \{-1,1\} \text{
  with }
\epsilon = 1 \text{ if } a \neq b \}$$
From now on tagged edges will be called \textit{arcs}. Arcs
starting and ending in the same vertex $a$ will be represented as
lines between the puncture and the vertex $a$. Arcs with
$\epsilon = -1$ will be drawn with a tag on it. In some cases we will
draw them as loops.  For simplicity, sometimes we use Greek letters instead of the notation \( M_{a,b}^{\epsilon} \) if the context is clear. The crossing number $e(M_{a,b}^\epsilon,N_{c,d}^{\epsilon'})$ is the minimal number of intersection of representations of
$M_{a,b}^\epsilon$ and $N_{c,d}^{\epsilon'}$ in the interior of the punctured polygon. When $a=b$ and $c=d$, we let the crossing
number be $1$ if $a \neq c$ and $\epsilon \neq \epsilon'$ and $0$
otherwise. If $e(M_{a,b}^\epsilon,N_{c,d}^{\epsilon'}) = 0$, we say
that $M_{a,b}^\epsilon$ and $N_{c,d}^{\epsilon'}$ do not cross.\\

 Recall that an \textit{elementary move} is a mapping that sends an arc \( M_{a,b}^\ze \) to an arc \( M_{a',b'}^{\ze'} \) satisfying certain conditions defined in four separate cases according to the relative
position of $a$ and $b$. Let $c$ (respectively, $d$) be the counterclockwise neighbor of $a$
(respectively, $b$).
\begin{enumerate}
    \item If \( |\delta_{a,b}| = 3 \), then there is precisely one elementary move: \( M_{a,b} \rightarrow M_{a,d} \).
    \item If \( 4 \leq |\delta_{a,b}| \leq n-1 \), then there are precisely two elementary moves: \( M_{a,b} \rightarrow M_{c,b} \) and \( M_{a,b} \rightarrow M_{a,d} \).
    \item If \( |\delta_{a,b}| = n \), then \( d = a \), and there are precisely three elementary moves: \( M_{a,b} \rightarrow M_{c,b} \), \( M_{a,b} \rightarrow M_{a,a}^1 \), and \( M_{a,b} \rightarrow M_{a,a}^{-1} \).
    \item If \( |\delta_{a,b}| = n+1 \), then there is precisely one elementary move: \( M_{a,a}^\ze \rightarrow M_{c,a} \).
\end{enumerate}

The second author defined a category $\mathcal{C}$ which is equivalent to the cluster
category of Dynkin type $\mathbb{D}_n$ \cite{Schiffler}. The objects in $\mathcal{C}$ are
direct sums of arcs (tagged edges), and the morphism space from
$\alpha$ to $\beta$ is spanned by sequences of elementary moves modulo the so-called mesh-relations.\\

A \textit{triangulation} of \( \Pi_n \) consists of a maximal set of non-crossing arcs. Any such set will have \( n \) elements.
Let $T$ be a triangulation of
\( \Pi_n \), we can assign to $T$ a quiver $Q_T$ in the following way. The vertices are the
midpoints of the arcs. There is an arrow between $i$ and $j$ if
the corresponding arcs bound a common triangle. The orientation
is $i \rightarrow j$ if the arc corresponding to $j$ can be
obtained from the arc corresponding to $i$ by rotating clockwise about their common vertex. In the case when there are two arcs $\alpha$ and $\alpha'$ between the puncture and the same vertex on the boundary, both adjacent to an arc $\beta$ and a boundary edge $\delta$, we consider the triangle with edges $\alpha$, $\beta$ and $\delta$ separately from the triangle with edges $\alpha'$, $\beta$ and $\delta$, when thinking of $\alpha$ and $\alpha'$ as loops around the puncture. If we end up with an oriented cycle of length $2$, delete both arrows in the cycle. See some examples in Figure \ref{figtriquiv}.

\begin{figure}[H]

\begin{tikzpicture}
\node (a) at (0,0) {$\begin{array}{cc}
\xy/r4.5pc/: {\xypolygon5"A"{~<{}~>{-}{\scriptstyle\bullet}}},
*+{\scriptstyle\bullet},
\POS"A3"\drop{\begin{array}{c}   \\ \\ \end{array}  }
\POS"A4"\drop{\begin{array}{c}  \end{array}  }
\POS"A1"\drop{\begin{array}{c} \end{array}  }  
\POS"A2"\drop{\begin{array}{c}\\ \\  \end{array}  }  
\POS"A5"\drop{\begin{array}{c}\qquad \end{array}  } 
\POS"A4" \ar|-(0.5){\SelectTips{cm}{}\object@{+}}@{-} @/^1.3ex/ "A0"
\POS"A3" \ar@{-} @/^13ex/ "A4"
\POS"A4" \ar@{-} @/_0.8ex/  "A0"
\POS"A3" \ar@{-} @/^8.5ex/  "A5",
\POS"A3" \ar@{-} @/^3.3ex/ "A1"
\endxy 
&
\xy/r4.5pc/: {\xypolygon5"A"{~<{}~>{-}{\scriptstyle\bullet}}},
*+{\scriptstyle\bullet},
\POS"A3"\drop{\begin{array}{c}  \\ \\ \end{array}  } 
\POS"A4"\drop{\begin{array}{c}  \end{array}  }
\POS"A1"\drop{\begin{array}{c} \end{array}  }  
\POS"A2"\drop{\begin{array}{c}\\ \\  \end{array}  }  
\POS"A5"\drop{\begin{array}{c}\qquad \end{array}  } 
\POS"A3" \ar@{-}  "A0",
\POS"A4" \ar@{-}  "A0"
\POS"A3" \ar@{-} @/^13ex/ "A4",
\POS"A3" \ar@{-} @/^8.5ex/  "A5",
\POS"A5" \ar@{-} @/_3.3ex/ "A2",
\endxy 
\end{array}$};
\node (b) at (-2.9,0.27) {\Large{$\bullet$}};
\node (c) at (-2.65,-0.77) {\Large{$\bullet$}};
\node (d) at (-2.22,-0.65) {\Large{$\bullet$}};
\node (e) at (-2.23,0.53) {\Large{$\bullet$}};
\node (e) at (-0.9,0.56) {\Large{$\bullet$}};
\node (f) at (1,0.07) {\Large{$\bullet$}};
\node (g) at (1.44,-1.26) {\Large{$\bullet$}};
\node (h) at (2.56,-0.16) {\Large{$\bullet$}};
\node (i) at (3.09,-0.5) {\Large{$\bullet$}};
\node (j) at (2.95,0.6) {\Large{$\bullet$}};
\draw[->, >=angle 45 ,blue](-2.65,-0.77) -- (-2.92,0.18); 
\draw[->, >=angle 45 ,blue](-2.22,-0.65) -- (-2.84,0.22); 
\draw[->, >=angle 45 ,blue](-2.23,0.53) -- (-2.8,0.33); 
\draw[->, >=angle 45 ,blue](-0.9,0.56) -- (-2.12,0.56); 
\draw[->, >=angle 45 ,blue](2.56,-0.16) -- (1.11,0.13); 
\draw[->, >=angle 45 ,blue](1.44,-1.28) -- (2.5,-0.22);
\draw[->, >=angle 45 ,blue](3.09,-0.5) -- (2.65,-0.2); 
\draw[->, >=angle 45 ,blue](3.1,-0.5) -- (2.95,0.5);
\end{tikzpicture}
\caption{Triangulations and corresponding quiver.}\label{figtriquiv}
\end{figure}


\subsection{Category of socle-projective modules of a poset}
In this section, we recall some concepts regarding posets,  socle-projective modules over incidence
algebras of posets in the sense of \cite{simson4, simson3}. We use the notation from \cite{schifflerserna}.\\

We denote by $(\P,\preceq)$ a finite  partially ordered set (in short, poset) with respect to the partial order $\preceq$. We shall write $x\prec y$ if $x\preceq y$ and $x\neq y$. For the sake of simplicity we write $\P$ instead of $(\P,\preceq)$. Let $\max\P$ (respectively, $\min\P$) be the set of all maximal  (respectively, minimal) points of $\P$. A poset $\P$ is called an \textit{r-peak poset} if $\left| \max\P\right|=r$.  \textcolor{black}{A} full subposet $\P'$ of $\P$  is said to be a \textit{peak-subposet} if $\max\P'\subseteq\max\P$.  \textcolor{black}{$\P$ is \textit{connected} if its Hasse quiver is connected.}\\

Let $\P$ be a poset and let $x, y \in \P$. We say that $y$ covers $x$ if $x \prec y$ and there is no element $z \in \P$ such that $x \prec z \prec y$.
The Hasse quiver $Q$ of $\P$ is the quiver whose vertices correspond to the elements of $\P$, and which contains an arrow $\alpha: x \to y$ for each pair $x, y \in \P$ such that $y$ covers $x$.
In the diagrams representing the Hasse quiver of a poset $\P$, the maximal elements (or peaks) of $\P$ are placed at the bottom, and each of them is denoted by the symbol~$\star$.
The valence of an element $x \in \P$ is defined as the number of arrows in the Hasse quiver of $\P$ having $x$ as either their source or target vertex.\\
 
Given a finite poset $\P$,  by $\Bbbk\P$ we mean the \textit{incidence algebra} of the poset $\P$. $\Bbbk\P$ can be described as a bound quiver algebra $\Bbbk Q/I$, where $Q$ is the  Hasse quiver $Q$ of $\P$ and  $I$ is the ideal of $\Bbbk Q$ generated by all the commutativity relations $\gamma-\gamma'$  with  $\gamma$ and $\gamma'$ parallel paths in $Q$. In this case, the category $\md(\Bbbk\P)$ of the finitely generated $\Bbbk\P$-modules is identified with the well known category $\rep (Q,I)$ of representations  of the bound quiver $(Q,I)$.\\

For our case, the \textit{socle} $\text{soc }M$ of a module $M$ is the semisimple submodule generated by  all simple submodules of $M$. A module $M$ is called \textit{socle-projective} if $\text{soc }M$ is a projective module. The full subcategory of  $\md (\Bbbk\P)$  whose objects are the socle-projective $\Bbbk\P$-modules  is denoted by $\md_{sp}(\Bbbk\P)$. The following proposition presents an explicit description of the objects in $\md_{sp}(\Bbbk\P)$.

\begin{proposition}\cite[Section 3]{simson5} \label{conditions,a,b} Each $\Bbbk\P$-module $M$ in $\md(\Bbbk\mathscr{P})$ is identified with  a collection $M=(M_x,{_y}h_x)_{x,y\in\P}$  of finite-dimensional $\Bbbk$-vector spaces $M_x$, one for each point $x\in\P$, and a collection of $\Bbbk$-linear maps ${_y}h_x: M_x \to M_y$, one for each relation $x\preceq y$ in $\P$, such that

\begin{itemize}
\item[(a)]${_x}h_x$ is the identity of $M_x$  for all $x\in\P$ and ${_w}h_y\cdot {_y}h_x= {_w}h_x$ for all $x\preceq y\preceq w$ in $\P$.
\end{itemize}
Furthermore, $M=(M_x,{_y}h_x)_{x,y\in\P}$ is a socle-projective module if  it also holds that  
\begin{itemize}
\item[(b)] For all $x\in\P\setminus\max\P$, the $\Bbbk$-subpace $$I_x=\stackbin[\begin{smallmatrix}  z\in\max\P \\ z\succ x\end{smallmatrix}]{}{\bigcap}\ker {_z}h_x$$ of $M_x$ is the zero subspace.
\end{itemize}
\end{proposition}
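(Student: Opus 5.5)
The plan is to prove both parts of Proposition \ref{conditions,a,b} directly from the bound quiver description $\Bbbk\P=\Bbbk Q/I$, where $Q$ is the Hasse quiver and $I$ is generated by the commutativity relations.

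\textbf{Part (a).} I will use the standard equivalence $\md(\Bbbk\P)\simeq\rep(Q,I)$. Given a representation $(M_x,\varphi_\alpha)$ of $(Q,I)$ and a relation $x\preceq y$, define ${_y}h_x$ as follows:
\begin{itemize}
\item if $x=y$, set ${_y}h_x$ to be the identity of $M_x$;
\item if $x\prec y$, set ${_y}h_x$ to be the composite of the $\varphi_\alpha$ along any path from $x$ to $y$ in $Q$.
\end{itemize}
Such a path exists because every relation $x\prec y$ in a finite poset is refined by a chain of covers. The composite is independent of the chosen path because all parallel paths are identified in $I$. Transitivity ${_w}h_y\,{_y}h_x={_w}h_x$ then follows by concatenating paths. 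Conversely, a family $(M_x,{_y}h_x)$ satisfying (a) restricts to the covering arrows, and those restrictions automatically satisfy the commutativity relations. The two constructions are mutually inverse and functorial, so this settles the identification.

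\textbf{Part (b): the simple submodules of $M$.} The indecomposable projective at a vertex $a$ is $P(a)=e_a\Bbbk\P$ or $\Bbbk\P e_a$ depending on the side convention. With the Hasse diagram drawn so that the arrows $x\to y$ point toward the peaks, $P(a)$ has support $\{y:y\succeq a\}$, one-dimensional at each such point, with identity maps. So $P(a)$ is simple exactly when $a\in\max\P$. Hence the projective simples are exactly the $S(z)$ with $z$ maximal, and $\text{soc }M$ is projective if and only if no simple $S(x)$ with $x\notin\max\P$ embeds into $M$.

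\textbf{Part (b): socle at a non-maximal point.} The $x$-component of the socle is
\[
\{m\in M_x : \varphi_\alpha(m)=0 \text{ for every arrow } \alpha \text{ starting at } x\},
\]
since such vectors span copies of $S(x)$. By part (a) this subspace equals $\bigcap_{y\succ x}\ker {_y}h_x$. It remains to show that this agrees with $I_x=\bigcap_{z\in\max\P,\,z\succ x}\ker {_z}h_x$ whenever every lower socle component vanishes.

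\textbf{Main obstacle: comparing the two intersections.} One inclusion is clear: since ${_z}h_x={_z}h_y\,{_y}h_x$, any vector killed by all ${_y}h_x$ with $y\succ x$ lies in $I_x$. For the converse I will argue by downward induction on $x$, with the maximal elements as the base case.
\begin{itemize}
\item Assume $\text{soc }M$ is projective and let $m\in I_x$.
\item For each cover $y$ of $x$, the vector ${_y}h_x(m)$ is killed by every ${_z}h_y$ with $z\in\max\P$, $z\succeq y$.
\item If $y$ is not maximal, then ${_y}h_x(m)\in I_y$, and $I_y=0$ by induction, so ${_y}h_x(m)=0$.
\item If $y$ is maximal, then ${_y}h_x(m)=0$ directly because $m\in I_x$.
\item Therefore $m$ lies in the socle component at $x$, which is zero by projectivity. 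Hence $I_x=0$.
\end{itemize}
Conversely, if $I_x=0$ for every non-maximal $x$, the first inclusion gives $(\text{soc }M)_x\subseteq I_x=0$. So the socle is supported on $\max\P$ and is therefore projective.

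The only delicate point is fixing the orientation convention consistently, i.e. maxima as sinks. This makes the $P(z)$ with $z$ maximal simple and turns the statement into Simson's characterization in \cite{simson5}.
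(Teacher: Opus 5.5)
Your proof is correct. The paper gives no argument for this proposition: it quotes it from Simson and uses it as a black box, so your write-up is a self-contained justification of a cited result rather than a variant of a proof in the text.

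Your conventions agree with the paper's. The Hasse arrows go $x\to y$ when $y$ covers $x$, so the peaks are the sinks and $P(z)=S(z)$ holds exactly for $z\in\max\P$. This is consistent with the paper's example, where $\text{soc }M=S(3)\oplus S(4)$. You also prove the equivalence, which is how Simson formulates it. The statement as printed only asserts that (b) implies socle-projectivity, and that direction is the one the paper actually uses, for instance when it checks $M^{\gamma}$ in Lemma~\ref{lemmafunctor}.

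Two small remarks.

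First, you identify $(\text{soc }M)_x$ with the common kernel of the arrows leaving $x$. This uses that the arrows generate $\operatorname{rad}\Bbbk\P$, and it is worth saying why that holds. The commutativity relations only involve paths of length at least $2$, since a cover admits no parallel path of length $\geq 2$. Together with acyclicity of $Q$, this makes the ideal $I$ admissible.

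Second, the downward induction can be replaced by a one-step argument. Suppose $0\neq m\in I_x$ with $x\notin\max\P$, and let $N$ be the submodule generated by $m$. Then $N_y=\Bbbk\,{_y}h_x(m)$ for $y\succeq x$ and $N_y=0$ otherwise, so $N$ vanishes at every peak. Since $N\neq 0$, it contains a simple submodule $S(y)$ with $y$ non-maximal. That is a non-projective simple inside $\text{soc }M$.

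Your inductive version is equally valid; it just makes this observation one cover at a time.
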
 

\textcolor{black}{A \textit{morphism} $f:M\to N$ in $\md(\Bbbk\P)$ is a collection $f=(f_x)_{x\in\P}$ of linear maps $f_x:M_x\to N_x$ such that $f_y\circ{_y}h_x={_y}h'_x\circ f_x$ for each relation $x\preceq y$ in $\P$.}

  Given a poset $\P$ and an object $M=(M_x,{_y}h_x)_{x,y\in\P}$ in $\md(\Bbbk\P)$,  the \textit{Jacobson radical}  of $M$ is the $\Bbbk\P$-module given by  $\text{rad }M=((\text{rad }M)_x, {_y}\overline{h}_x)_{x,y\in\P}$, where  $(\text{rad }M)_x=\sum_{a\prec x}\text{Im}({_x}h_a)$ and ${_y}\overline{h}_x$ is the restriction of ${_y}h_x$ to $(\text{rad }M)_x$ for each $x\preceq y$ in $\P$.  \textcolor{black}{The \textit{top} of $M$ is the semisimple quotient $\text{top }M=M\slash\text{rad }M$, with $(\text{top }M)_x=M_x\slash(\text{rad }M)_x$.} Moreover,  the  \textit{coordinate vector} of  $M$ is given by the vector $$d = \dm M = (d_x)_{x\in\P}\in\mathbb{N}^{\P}$$ such that $d_x= \dim_{\Bbbk} M_x$ if $x\in\max\P$ and   $d_x=\dim_{\Bbbk}(\text{top }M)_x$ otherwise. If  $M$ is an indecomposable socle-projective $\Bbbk\P$-module, the \textit{coordinate support} $$\csupp M =\lbrace x\in\P\hspace{0.1cm}\vert\hspace{0.1cm} (\dm M)_x\neq 0\rbrace$$ of $M$ is a  peak-subposet  of $\P$. In particular, if $\csupp M=\P$, $M$  is called a \textit{sincere socle-projective} $\Bbbk\P$-module. Furthermore, if there exists a sincere socle-projective $\Bbbk\P$-module, we say that $\P$ is  a \textit{sincere poset}.\\

\begin{example}
The poset $\P = \{\, 1 < 2,\; 2 < 3,\; 2 < 4\}$ has incidence algebra given by the quiver

\[
\begin{tikzpicture}[baseline=(current bounding box.center),->,>=stealth, node distance=1.8cm]
\node (K0) at (-0.6,0){$Q=$};
\node (1) at (0,0) {1};
\node (2) at (1.5,0) {2};
\node (3) at (3,0.8) {3};
\node (4) at (3,-0.8) {4};
\draw (1) -- (2);
\draw (2) -- (3);
\draw (2) -- (4);
\end{tikzpicture}
\]

The module 
\[ 
\begin{tikzpicture}[baseline=(current bounding box.center),->,>=stealth,node distance=2cm]
\node (K0) at (-0.6,0){$M =$};
\node (K1) at (0,0) {$\Bbbk$};
\node (K2) at (1.5,0) {$\Bbbk^2$};
\node (K3) at (3,0.8) {$\Bbbk$};
\node (K4) at (3,-0.8) {$\Bbbk$};
\draw (K1) -- node[above] {{\scriptsize$\left[\begin{smallmatrix} 1\\0 \end{smallmatrix}\right]$}} (K2);
\draw (K2) -- node[pos=0.35,above, yshift=2pt] {{\scriptsize$\left[\begin{smallmatrix} 0 &\hspace{-7pt}1 \end{smallmatrix}\right]$}} (K3);
\draw (K2) -- node[pos=0.35,below, yshift=-2pt] {{\scriptsize$\left[\begin{smallmatrix} 1 &\hspace{-7pt}1 \end{smallmatrix}\right]$}} (K4);
\end{tikzpicture}
\]
 is an indecomposable socle-projective module because $\operatorname{soc} M$ is the direct sum of the indecomposable simple projective modules $S(3)$ and $S(4)$.  $M$ is also sincere, since $\operatorname{top} M = S(1) \oplus S(2)$, and thus 
$\dm M = (1,1,1,1)$. Therefore, $\P$ is a sincere poset.
\end{example}

 Kleiner in \cite{kleiner75} and   Kosakowska in \cite{justina,justina1,justina2} described the classification of all sincere  $r$-peak posets of representation finite type and  their sincere socle-projective representations. Such lists are important because Simson showed in \cite{simson3} what when the category $\md_{sp}(\Bbbk\P)$ is of finite representation type, i.e. it has only a finite number of nonisomorphic indecomposable objects, the indecomposable socle-projective modules can be obtained via the sincere peak-subposets of $\P$, i.e.  we can get all indecomposable objects in $\md_{sp}(\Bbbk\P)$  lifting all sincere socle-projective $\Bbbk\mathcal{S}$-modules of all sincere peak-subposets $\mathcal{S}$ of $\P$ via the \textit{subposet induced functor} \cite[]{kasjan1996tame}. 
\begin{equation}\label{inducedfunctor}
{T_{\mathcal{S}}}:\md_{sp}(\Bbbk\mathcal{S})\to \md_{sp}(\Bbbk\P)
\end{equation}
that assigns to the socle-projective $\Bbbk\mathcal{S}$-module  $M$ the socle-projective $\Bbbk\P$-module  $M\otimes_{\Bbbk\mathcal{S}}(e_{\mathcal{S}}\Bbbk\P e_{S\cup(\P\setminus\max\P)})$,  where $e_J=\sum_{i\in J}e_i$ for any subposet $J\subseteq \P$. 
\begin{proposition}\cite[Proposition 2.11]{simson1991splitting}  \label{ind} Up to isomorphism, any indecomposable object $M$ in $\md_{sp}(\Bbbk\P)$ is   the image $T_{\mathcal{S}}(L)$ of a sincere socle-projective $\Bbbk\mathcal{S}$-module $L$, where $\mathcal{S}$ is a sincere peak-subposet of $\P$. In this case, $\mathcal{S}=\csupp M$ and $L$ is the restriction of $M$ to $\mathcal{S}$.
\end{proposition}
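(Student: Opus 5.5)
The statement is Simson's result (cited as \cite[Proposition 2.11]{simson1991splitting}). My plan is to reconstruct it from Proposition \ref{conditions,a,b} and the definition of the subposet induced functor \eqref{inducedfunctor}. Let $M$ be an indecomposable socle-projective $\Bbbk\P$-module and put $\mathcal{S}=\csupp M$. The claim already recorded in the text, that $\mathcal{S}$ is a peak-subposet, I will check directly. An element $x\in\P\setminus\max\P$ lies in $\mathcal{S}$ only if $(\text{top }M)_x\neq 0$. By condition (b), each such $x$ embeds $M_x$ into $\bigoplus_{z\succ x,\,z\in\max\P} M_z$. Hence if $M_x\neq 0$, some maximal $z\succ x$ has $M_z\neq 0$, and so $z\in\mathcal{S}$. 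This gives $\max\mathcal{S}\subseteq\max\P$.

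Let $L=M|_{\mathcal{S}}$ be the restriction to $\mathcal{S}$. I will show three things in order.
\begin{enumerate}
\item $L$ is socle-projective over $\Bbbk\mathcal{S}$. The maximal points of $\mathcal{S}$ are the maximal points of $\P$ in the support, so condition (b) for $M$ restricts to condition (b) for $L$.
\item $L$ is sincere. Its top at each non-maximal $x\in\mathcal{S}$ equals $(\text{top }M)_x$, because the images from predecessors outside $\mathcal{S}$ factor through elements of $\mathcal{S}$ or vanish modulo the radical. Making this factorization precise is a small lemma.
\item The natural map $T_{\mathcal{S}}(L)\to M$, adjoint to the identity of $L$, is an isomorphism.
\end{enumerate}

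For step (3), I evaluate the tensor product pointwise. At $y\in\P$, $T_{\mathcal{S}}(L)_y$ is the colimit of $L_x$ over $x\in\mathcal{S}$ with $x\preceq y$, taken modulo the kernel toward the maximal points. Restricting to the peaks $\mathcal{S}\cup(\P\setminus\max\P)$ is exactly what forces the image to be socle-projective.

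Surjectivity follows because $M$ is generated by its top. Every top generator sits at a point of $\mathcal{S}$, so $M_y=\sum_{x\in\mathcal{S},\,x\preceq y}\text{Im}({_y}h_x)$. Injectivity follows from socle-projectivity: an element of the kernel maps to zero in every peak $M_z$ with $z\in\max\mathcal{S}$, and peaks outside $\mathcal{S}$ carry zero in $M$. Once the induced module's socle-projective condition (b) is in place, this forces the element to be zero.

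The same computation shows that $T_{\mathcal{S}}$ is full and faithful on socle-projective modules, with restriction as a left inverse. Therefore $L$ is indecomposable, since a decomposition of $L$ would induce one of $M\cong T_{\mathcal{S}}(L)$. This also justifies the claim that $L$ is the restriction of $M$.

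The main obstacle is step (3), specifically identifying $T_{\mathcal{S}}(L)$ concretely from the bimodule $e_{\mathcal{S}}\Bbbk\P e_{\mathcal{S}\cup(\P\setminus\max\P)}$. One must check that the tensor product, with the column idempotents chosen this way, gives precisely the socle-projective envelope of the left Kan extension, rather than something containing extra socle at points of $\max\P\setminus\mathcal{S}$. I would handle this by first treating projectives $P_x$ with $x\in\mathcal{S}$, where the tensor product is $e_x\Bbbk\P$ restricted appropriately. I would then use right exactness together with a projective presentation of $L$, and finally apply condition (b) to kill torsion.
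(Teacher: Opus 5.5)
The paper gives no argument for this statement; it only quotes Simson. Several of your steps are sound: $\mathcal{S}=\csupp M$ is a peak-subposet, $L=M|_{\mathcal{S}}$ is socle-projective and sincere, $M$ is generated by $\{M_x\}_{x\in\mathcal{S}}$ (which gives surjectivity), and your indecomposability argument works.

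The gap is in step (3), exactly where you flag it, and the check you propose there would fail. Put $J=\mathcal{S}\cup(\P\setminus\max\P)$. The module $X=L\otimes_{\Bbbk\mathcal{S}}e_{\mathcal{S}}\Bbbk\P e_{J}$ is the left Kan extension $X_v=\mathrm{colim}_{x\in\mathcal{S},\,x\preceq v}L_x$ on $J$. In general it is \emph{not} socle-projective. So it is not ``the colimit modulo the kernel toward the maximal points'', and your injectivity argument, which you make conditional on condition (b) holding for $T_{\mathcal{S}}(L)$, has nothing to rest on.

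Here is a concrete example. Let $\P=\{a,b,y,z\}$ with $a\prec y$, $b\prec y$, $y\prec z$, and let $M_v=\Bbbk$ for all $v$, with all maps the identity. Then $M$ is indecomposable and socle-projective, and $(\text{top }M)_y=0$. Hence $\mathcal{S}=\{a,b,z\}$ and $J=\P$. The diagram $\{a,b\}$ is discrete, so $X_y=L_a\oplus L_b=\Bbbk^2$, with structure map $\left[\begin{smallmatrix}1&1\end{smallmatrix}\right]$ to $X_z=L_z=\Bbbk$. The vector $(1,-1)$ spans a non-projective simple $S(y)$ inside $\text{soc }X$. Consequently the natural map $X\to M$ is surjective but not injective.

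This torsion sits at a non-maximal point outside $\mathcal{S}$. Your worry about extra socle at $\max\P\setminus\mathcal{S}$ is already handled by the idempotent $e_J$. Right exactness and projective presentations will only recompute $X$; they will not remove this torsion.

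What is missing is an extra operation, not a verification. $T_{\mathcal{S}}(L)$ must be defined as the image of $X$ (extended by zero on $\max\P\setminus\mathcal{S}$) in its peaks:
\[
T_{\mathcal{S}}(L)_v=\mathrm{Im}\big(X_v\to\textstyle\bigoplus_{z\in\max\P,\,z\succeq v}X_z\big).
\]
Equivalently, $T_{\mathcal{S}}(L)$ is the quotient of $X$ by its largest submodule vanishing at all peaks. This is the reflection of $\md(\Bbbk\P)$ onto the torsion-free class $\md_{sp}(\Bbbk\P)$, which is cogenerated by the injective envelopes of the simple projectives $S(z)$, $z\in\max\P$. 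It is for this functor that Simson's statement holds, and that the paper's phrase ``the socle-projective $\Bbbk\P$-module'' is accurate.

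With this definition, step (3) needs no presentation argument. One has
\[
T_{\mathcal{S}}(L)_v=\mathrm{Im}\big(\textstyle\bigoplus_{x\in\mathcal{S},\,x\preceq v}L_x\to\bigoplus_{z\in\max\mathcal{S},\,z\succeq v}L_z\big).
\]
Your generation argument, condition (b) for $M$, and $M_z=0$ for $z\in\max\P\setminus\mathcal{S}$ together show that $M_v$ embeds in the same space $\bigoplus_{z\in\max\mathcal{S},\,z\succeq v}M_z$ with exactly this image, compatibly with the structure maps. Restriction to $\mathcal{S}$ is then a left inverse of $T_{\mathcal{S}}$, because $L$ is socle-projective over $\mathcal{S}$; this is what your indecomposability argument needs. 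Also, what you call the socle-projective ``envelope'' is a quotient, not an envelope.
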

The case when  $\P$ is a one-peak poset  was presented in \cite[Section 5.3]{simson}.

\subsection{Posets of type $\mathbb{A}$} \label{subsectionposetsof typeA} 
Let us recall some results and notation regarding poset of type $\mathbb{A}$ that were introduced  in \cite{schifflerserna}.

\begin{definition} \cite[Definition 3.1]{schifflerserna} \label{defposetypeA}
A finite connected poset $\P$ is said to be \textit{poset of type $\mathbb{A}$} if $\P$ does not contain as a peak-subposet any of the following posets, where the peaks 
are represented by stars:\vspace{0.2cm}
\begin{center}

\begin{tabular}{|l|l|l|l|}
\hline 
\begin{tikzpicture}
\node (top) at (-0.7,0.5) {$\mathcal{R}_1$};

\node (top) at (0,-0.7) {$\star$};
\node [above of=top, node distance=0.7cm](center) {$\circ$};
\node [left  of=center, node distance=0.7cm] (left)  {$\circ$};
\node [right of=center, node distance=0.7cm] (right) {$\circ$};

    \draw [blue,  thick, <-, shorten <=-2pt, shorten >=-2pt] (top) -> (left);
    \draw [blue, thick, <-, shorten <=-2pt, shorten >=-2pt] (top) -> (right);
    \draw [blue, thick, <-, shorten <=-2pt, shorten >=-2pt] (top) -> (center);\end{tikzpicture} &
    
    \begin{tikzpicture} 
    \node (top) at (0,0.5) {$\mathcal{R}_2$};
    \node (top1) at (0,-0.7) {$\star$};
    \node [right of=top1, node distance=0.7cm](top2)  {$\star$};
    \node (m) at (0.35,-0.35) {$\circ$};
    \node (bellow) at (0.35,0) {$\circ$};
     \draw [blue,  thick,->, shorten <=-2pt, shorten >=-2pt] (top1) -- (m);
    \draw [blue, thick,->, shorten <=-2pt, shorten >=-2pt] (top2) -- (m);
    \draw [blue, thick,->, shorten <=-2pt, shorten >=-2pt] (m) -- (bellow);
    
   \end{tikzpicture}&
   
    \begin{tikzpicture} 
\node (top) at (0,0.5) {$\mathcal{R}_3$};   
   \node (top3) at (0,-0.7){$\star$};    
    \node [right of=top3, node distance=0.7cm](top4) {$\star$};
        \node [right of=top4, node distance=0.7cm] (top5) {$\star$};
        \node [above of=top4, node distance=0.7cm] (be) {$\circ$};
 \draw [blue, thick,->, shorten <=-2pt, shorten >=-2pt] (be) -- (top3);
 \draw [blue, thick, ->, shorten <=-2pt, shorten >=-2pt] (be) -- (top4);
 \draw [blue, thick, ->,shorten <=-2pt, shorten >=-2pt] (be) -- (top5);\end{tikzpicture} & 
 
 \begin{tikzpicture}
 \node (1) at (0,-0.7) {$\star_1$};
\node (A) at (0.8,0.5) {$\mathcal{R}_{4,n}$, $n\geq 0$};
\node [right of=1, node distance=0.7cm](2) {$\star_2$};

\node [right of=2, node distance=0.7cm](3) {$\star_3$};
\node [right of=3, node distance=0.7cm] (4)  {$\cdots$};
\node [right of=4, node distance=0.7cm] (5) {$\star_{n+2}$};

\node [above of=1, node distance=0.7cm] (1')  {$\circ$};
\node [above  of=2, node distance=0.7cm] (2')  {$\circ$};
\node [above  of=3, node distance=0.7cm] (3')  {$\circ$};
\node [above  of=4, node distance=0.7cm] (4')  {$\dots$};
\node [above  of=5, node distance=0.7cm] (5')  {$\circ$};

   \draw [blue,  thick, <-, shorten <=-2pt, shorten >=-2pt] (1) -- (1');
    \draw [blue, thick, ->, shorten <=-2pt, shorten >=-2pt] (2') -- (2);
    
\draw [blue,  thick,<-, shorten <=-2pt, shorten >=-2pt] (3) -- (3');
    \draw [blue, thick,<-, shorten <=-2pt, shorten >=-2pt] (5) -- (5');    
    
    \draw [blue, thick, <-, shorten <=-2pt, shorten >=-2pt] (1) -- (2');
    
 \draw [blue, thick,<-, shorten <=-2pt, shorten >=-2pt] (2) -- (3');
     \draw [blue, thick, ->,shorten <=-2pt, shorten >=-2pt] (1') -- (5);
\end{tikzpicture} \\ 
\hline 
\end{tabular} 

\end{center}
\vspace{0.2cm}
\end{definition}

Actually, the posets of  type $\mathbb{A}$ can be viewed as posets associated to certain quivers which are obtained from Dynkin quivers of type $\mathbb{A}$  by adding some new arrows. To explain this, we need the following definitions:\\

Let $Q$ be an acyclic quiver and let $\P_Q=Q_0$ be its set of vertices. We define an order on $\P_Q$ by $x\preceq y$ if and only if there exists a path from $x$ to $y$ in $Q$. We say that $\P_Q$ is the \textit{poset associated to the quiver} $Q$.\\

\textcolor{black}{A vertex $x$ of $Q$ is a \textit{sink} (respectively, \ \textit{source}) if no arrow $\alpha$ has $s(\alpha)=x$ (respectively, \ $t(\alpha)=x$).}\\

Given a  Dynkin quiver $Q$  of type $\mathbb{A}$, its  underlying graph $\overline{Q}$ has the form  
\begin{center}
\begin{tikzcd}[arrows=-,row sep= tiny, column sep = normal]
 \underset{_1}{\circ} \arrow[r] & \underset{_2}{\circ}\arrow[r,dotted, thick] & \underset{_{n-1}}{\circ} \arrow[r] &
  \underset{_{n}}{\circ},  
 \end{tikzcd}
\end{center}
and the vertices $1$ and $n$ are called \textit{extreme vertices} of $Q$. Moreover, if $z$ is  a sink vertex in $Q$, the maximal full subquiver $Q^{(z)}$ of $Q$ such that $z$ is the unique sink vertex in $Q^{(z)}$ is said to be  the \textit{$z$-subquiver} of $Q$. In other words, the vertices of  $Q^{(z)}$ are the vertices in  the support $\text{Supp }I(z)$ of the indecomposable injective representation $I(z)$ at vertex $z$.

\begin{definition}\cite[Definition 3.5]{schifflerserna}\label{aliensetDef}
A set $F=\{\alpha_1,\dots,\alpha_t\}$ of new arrows for $Q$ is called an \textit{alien set} for $Q$ if the following conditions  hold.

\begin{itemize}
\item[(a)] For all $\alpha\in F$, there exists a sink vertex $z$ in $Q$ such that $s(\alpha),t(\alpha)\in \text{Supp }I(z)$.

\item[(b)] For all $\alpha\in F$, $t(\alpha)$  is not a source vertex in $Q$ unless it is an extreme vertex in $Q$. 
\item [(c)]  For all $\alpha\in F$,  the  arrow $\alpha$ is the unique path from $s(\alpha)$ to  $t(\alpha)$ in $Q^F$, where $Q^F$  is  the quiver such that $Q^F_0=Q_0$ and $Q^F_1=Q_1\cup F$. 
 
\item[(d)] The quiver $Q^F$ is acyclic.
\end{itemize}
The arrows in an alien set for $Q$ will be called \textit{alien arrows}.
\end{definition}

 \begin{proposition} \cite[Proposition 3.8]{schifflerserna} \label{0} A poset $\P$ is of type $\mathbb{A}$ if and only if there exists a Dynkin quiver $Q$ of type $\mathbb{A}$ and an  alien set $F$ for $Q$ such that $\P=\P_{Q^F}$ is the  poset associated to the quiver $Q^F$.
\end{proposition}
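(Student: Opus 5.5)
The plan is to prove both implications separately, using the description of $\P_{Q^F}$ through the Hasse quiver. Conditions (c) and (d) of Definition \ref{aliensetDef} guarantee that $Q^F$ is acyclic and that every alien arrow is itself a cover relation. Hence the Hasse quiver of $\P_{Q^F}$ is obtained from $Q^F$ by deleting exactly those arrows of $Q$ that have become redundant, that is, arrows $x\to y$ of $Q$ for which a longer path $x\to\cdots\to y$ now exists through some alien arrow.

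\textbf{($\Leftarrow$).} We start from a Dynkin quiver $Q$ of type $\mathbb{A}$ and an alien set $F$, and assume that one of $\mathcal{R}_1,\mathcal{R}_2,\mathcal{R}_3,\mathcal{R}_{4,n}$ occurs as a peak-subposet of $\P_{Q^F}$. The peaks of $\P_{Q^F}$ are the sinks of $Q^F$, and no alien arrow starts at a sink of $Q$ because of condition (a), so these are precisely the sinks of $Q$. Condition (a) also says that every comparability $x\prec y$ is witnessed by a path lying inside the support of a single $I(z)$, namely a walk in the underlying line towards $z$. We then rule out each configuration in turn:
\begin{itemize}
\item[$\mathcal{R}_1$:] three pairwise incomparable points lie below one peak $z$. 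Inside $Q^{(z)}$, which is a line with $z$ as its only sink, any two points on the same side of $z$ are comparable once alien arrows are added, so this is impossible.
\item[$\mathcal{R}_3$:] a point lies below three peaks. In $Q$ a point lies below at most two sinks, and alien arrows stay inside a single $\operatorname{Supp} I(z)$, so they cannot enlarge the set of peaks above a point.
\item[$\mathcal{R}_2$ and $\mathcal{R}_{4,n}$:] here condition (b) enters. An alien arrow ending at a non-extreme source would be what creates a point below two peaks that also has a further point below it (as in $\mathcal{R}_2$), or a crown-shaped cycle of peaks (as in $\mathcal{R}_{4,n}$). Because $Q$ is a tree of type $\mathbb{A}$, a crown among its sinks would have to wrap around the line, and this can happen only through the extreme vertices.
\end{itemize}

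\textbf{($\Rightarrow$).} Given a poset $\P$ of type $\mathbb{A}$, we construct $Q$ and $F$ by induction on the number of peaks. For $r=1$, excluding $\mathcal{R}_1$ forces $\P\setminus\max\P$ to have width at most $2$. We choose $Q$ to be two chains meeting at the peak, which is the $z$-subquiver, and we let $F$ be the covers of the Hasse quiver going between the two chains. Each such arrow is the unique path between its endpoints, and it can be arranged to satisfy (b).

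For $r>1$, excluding $\mathcal{R}_3$ means every point lies below at most two peaks. Excluding $\mathcal{R}_{4,n}$ means the bipartite graph with peaks on one side and the points below two peaks on the other contains no cycle, and with connectedness this graph is a path. This lets us order the peaks $z_1,\dots,z_r$ linearly. For each $i$, excluding $\mathcal{R}_2$ means the points below both $z_i$ and $z_{i+1}$ form a single point, which will be a source of $Q$ between the two sinks. We then glue the one-peak quivers obtained for the subposets below each $z_i$, identifying them along these shared points, and take as $F$ the remaining Hasse arrows. Finally we check conditions (a)–(d) and verify that $\P_{Q^F}=\P$.

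The main obstacle is the one-peak step: we must choose the two chains and the alien arrows so that condition (b) holds, since targets of alien arrows must not be non-extreme sources. We must also keep this choice compatible with the gluing at the shared points. The first thing to try is to take the chains as the two sides of a maximal zigzag, chosen greedily so that every alien target has an incoming arrow from $Q$.
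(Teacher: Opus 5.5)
The paper does not prove this statement; it quotes it from Schiffler--Serna. Your outline therefore has to stand on its own. Its overall architecture is right, but two steps are not justified as written.

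\textbf{The direction ($\Leftarrow$).} You claim that every comparability in $\P_{Q^F}$ is witnessed by a path inside a single $\operatorname{Supp} I(z)$. This does not follow from condition (a); it genuinely needs (b). Take $Q\colon a\to z_1\leftarrow v\to z_2\leftarrow w\to z_3$ and $F=\{a\to v,\ v\to w\}$. Conditions (a), (c), (d) hold, yet $a$ lies below all three sinks, so $\{a,z_1,z_2,z_3\}\cong\mathcal{R}_3$ and $\{a,v,z_1,z_2\}\cong\mathcal{R}_2$. Hence your $\mathcal{R}_3$ bullet, which uses only that each alien arrow stays inside one support, is invalid. Your $\mathcal{R}_2$ and $\mathcal{R}_{4,n}$ bullets are heuristics rather than arguments.

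The repair goes as follows. In a line, a vertex lies in two supports only if it is a non-extreme source. No arrow of $Q$ ends at a source, and by (b) no alien arrow ends at a non-extreme source. So for any path $u_0\to u_1\to\cdots\to u_m$ in $Q^F$, the vertex $u_1$ lies in exactly one $\operatorname{Supp} I(z)$. Such a vertex has at most one outgoing arrow of $Q$, pointing towards $z$, and by (a) its alien arrows stay in $\operatorname{Supp} I(z)$. By induction the whole path, including $u_0$, stays there. Therefore $x\prec z$ with $z$ a sink forces $x\in\operatorname{Supp} I(z)$, and the four exclusions are immediate:
\begin{itemize}
\item $\mathcal{R}_1$ fails by pigeonhole on the two sides of $z$.
\item $\mathcal{R}_3$ fails because a vertex lies in at most two supports.
\item $\mathcal{R}_2$ fails because a point below two peaks is the source between two neighbouring sinks, and no arrow of $Q^F$ ends there.
\item $\mathcal{R}_{4,n}$ fails because each crown point would be the unique source between two neighbouring sinks. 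For $n=0$ the two crown points would coincide; for $n\geq 1$ the neighbouring-sink relation on the line would contain a cycle.
\end{itemize}

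\textbf{The direction ($\Rightarrow$).} First record that a point $c$ below two peaks $z,z'$ is minimal in $\P$, since $a\prec c$ would give $\{a,c,z,z'\}\cong\mathcal{R}_2$. In particular, distinct shared points are incomparable. Then $\mathcal{R}_{4,n}$ excludes cycles in your bipartite graph, including two shared points for the same pair of peaks. However, acyclicity plus connectedness only gives a tree. To get a path you need that at most two shared points lie below any peak; this follows from $\mathcal{R}_1$, and you omit it.

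More importantly, the ``main obstacle'' you leave open disappears. Let $D(z_i)=\{x\in\P : x\preceq z_i\}$. The shared points below $z_i$ are incomparable minimal elements of the width-$\leq 2$ poset $D(z_i)\setminus\{z_i\}$. So \emph{any} decomposition of it into two chains (Dilworth) puts them at the bottoms of different chains, and the gluing goes through with no greedy choice. In the resulting line, the non-extreme sources are exactly the shared points. Being minimal in $\P$, they are the target of no cover. So (b) holds automatically when $F$ is the set of covers of $\P$ that are not arrows of $Q$; for $r=1$ it is even vacuous.

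The remaining checks are as you indicate:
\begin{itemize}
\item Every cover lies in some $D(z)=\operatorname{Supp} I(z)$, giving (a).
\item Every arrow of $Q^F$ is a strict relation of $\P$, giving (d).
\item Alien arrows are covers not parallel to arrows of $Q$, giving (c).
\item $Q^F$ contains all covers, so $\P_{Q^F}=\P$.
\end{itemize}
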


\begin{lemma} \cite[Lemma 3.9]{schifflerserna}\label{1}
Let $\P$ be a poset of type $\mathbb{A}$.  Then
\begin{itemize}
\item[(a)] $\md_{sp}\Bbbk\P$ is of finite representation type.
\item[(b)] $\P$  is a sincere poset if and only if $\P$ is isomorphic to one of the 
following posets:\vspace{0.2cm}

\begin{adjustbox}{max totalsize={0.9\textwidth}{0.9\textheight},center}
\begin{tabular}{|l|l|l|}
\hline 
\begin{tikzpicture}
 \node (1) at (0,0) {$\star_1$};
 
\node (A) at (0,1.15) {$\mathcal{S}^{(r)}_{1}$};
\node [right of=1, node distance=0.7cm](2) {$\star_2$};

\node [right of=2, node distance=0.7cm](3) {$\star_3$};
\node [right of=3, node distance=0.7cm] (4)  {$\cdots$};
\node [right of=4, node distance=0.7cm] (5) {$\star_{r}$};

\node [above  of=2, node distance=0.7cm] (2')  {$\circ$};
\node [above   of=3, node distance=0.7cm] (3')  {$\circ$};
\node [above   of=4, node distance=0.7cm] (4')  {$\dots$};
\node [above   of=5, node distance=0.7cm] (5')  {$\circ$};

   
    \draw [blue, thick, ->, shorten <=-2pt, shorten >=-2pt] (2') -- (2);
    
\draw [blue,  thick, <-,shorten <=-2pt, shorten >=-2pt] (3) -- (3');
    \draw [blue, thick,<-, shorten <=-2pt, shorten >=-2pt] (5) -- (5');    
    
    \draw [blue, thick, <-, shorten <=-2pt, shorten >=-2pt] (1) -- (2');
    
 \draw [blue, thick, <-, shorten <=-2pt, shorten >=-2pt] (2) -- (3');
    
\end{tikzpicture} & 

\begin{tikzpicture}
 \node (1) at (0,0) {$\star_1$};
 
\node (A) at (0,1.15) {$\mathcal{S}^{(r)}_{2}$};
\node [right of=1, node distance=0.7cm](2) {$\star_2$};

\node [right of=2, node distance=0.7cm](3) {$\star_3$};
\node [right of=3, node distance=0.7cm] (4)  {$\cdots$};
\node [right of=4, node distance=0.7cm] (5) {$\star_{r}$};

\node [above of=2, node distance=0.7cm] (2')  {$\circ$};
\node [above  of=3, node distance=0.7cm] (3')  {$\circ$};
\node [above  of=4, node distance=0.7cm] (4')  {$\dots$};
\node [above  of=5, node distance=0.7cm] (5')  {$\circ$};
\node [right of=5', node distance=0.7cm] (6')  {$\circ$};

   
    \draw [blue, thick, ->, shorten <=-2pt, shorten >=-2pt] (2') -- (2);
    
\draw [blue,  thick, <-,shorten <=-2pt, shorten >=-2pt] (3) -- (3');
    \draw [blue, thick, <-, shorten <=-2pt, shorten >=-2pt] (5) -- (5');    
    
    \draw [blue, thick, <-, shorten <=-2pt, shorten >=-2pt] (1) -- (2');
    
 \draw [blue, thick,<-, shorten <=-2pt, shorten >=-2pt] (2) -- (3');
    
\draw [blue, thick, ->,shorten <=-2pt, shorten >=-2pt] (6') -- (5);

\end{tikzpicture}

 & 
 
\begin{tikzpicture}
 \node (1) at (0,0) {$\star_1$};
 
\node (A) at (0,1.15) {$\mathcal{S}^{(r)}_{3}$};
\node [right of=1, node distance=0.7cm](2) {$\star_2$};

\node [right of=2, node distance=0.7cm](3) {$\star_3$};
\node [right of=3, node distance=0.7cm] (4)  {$\cdots$};
\node [right of=4, node distance=0.7cm] (5) {$\star_{r}$};

\node [above of=1, node distance=0.7cm] (1')  {$\circ$};
\node [above  of=2, node distance=0.7cm] (2')  {$\circ$};
\node [above  of=3, node distance=0.7cm] (3')  {$\circ$};
\node [above  of=4, node distance=0.7cm] (4')  {$\dots$};
\node [above  of=5, node distance=0.7cm] (5')  {$\circ$};
\node [right of=5', node distance=0.7cm] (6')  {$\circ$};

   \draw [blue,  thick, <-, shorten <=-2pt, shorten >=-2pt] (1) -- (1');
    \draw [blue, thick, ->,shorten <=-2pt, shorten >=-2pt] (2') -- (2);
    
\draw [blue,  thick, <-, shorten <=-2pt, shorten >=-2pt] (3) -- (3');
    \draw [blue, thick, <-, shorten <=-2pt, shorten >=-2pt] (5) -- (5');    
    
    \draw [blue, thick,<-, shorten <=-2pt, shorten >=-2pt] (1) -- (2');
    
 \draw [blue, thick, <-,shorten <=-2pt, shorten >=-2pt] (2) -- (3');
    
\draw [blue, thick, ->, shorten <=-2pt, shorten >=-2pt] (6') -- (5);

\end{tikzpicture} 
 
  \\ 
\hline 
\end{tabular} 
\end{adjustbox}\vspace{0.2cm}
for some $r\geq 1$. Furthermore, the module $M=(M_x,{_y}h_x)_{x,y\in\P}$ in $\md_{sp}(\Bbbk\P)$ such that $M_x=\Bbbk$ for all $x\in\P$  and ${_y}h_x=\mbox{id}_\Bbbk$ for each $x\preceq y$ in $\P$  is the unique sincere  object in $\md_{sp}(\Bbbk\P).$
\end{itemize}
\end{lemma}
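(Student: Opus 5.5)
Part (a) will be deduced from part (b) via Proposition \ref{ind}. By that proposition, every indecomposable socle-projective $\Bbbk\P$-module has the form $T_{\mathcal S}(L)$, where $\mathcal S=\csupp M$ is a sincere peak-subposet of $\P$ and $L$ is a sincere socle-projective $\Bbbk\mathcal S$-module. Since $\P$ is finite, it has only finitely many peak-subposets. It therefore suffices to show two things: each sincere peak-subposet $\mathcal S$ of $\P$ has only finitely many sincere indecomposables, and $\mathcal S$ is again of type $\AA$. The second fact is immediate from Definition \ref{defposetypeA}, because a peak-subposet of a peak-subposet of $\P$ is a peak-subposet of $\P$. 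Hence the forbidden posets $\mathcal R_1,\mathcal R_2,\mathcal R_3,\mathcal R_{4,n}$ cannot appear in $\mathcal S$ either.

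Part (b) is the core of the argument. The ``if'' direction is a direct check. For each of $\mathcal S^{(r)}_1,\mathcal S^{(r)}_2,\mathcal S^{(r)}_3$, take the constant module $M$ with $M_x=\Bbbk$ and all maps the identity. Condition (b) of Proposition \ref{conditions,a,b} holds because every $x$ lies below some peak via an identity map. The top is nonzero at each non-maximal point, since each such point is minimal in these posets (the pictures have height one), so $M$ is sincere. It is indecomposable since $\operatorname{End}(M)=\Bbbk$ by connectedness.

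For the ``only if'' direction, I would use Proposition \ref{0} to write $\P=\P_{Q^F}$ with $Q$ of type $\AA_n$ and $F$ an alien set. Let $M$ be a sincere indecomposable. First show that every non-maximal point of $\P$ is minimal. If some $x\prec y$ had $y$ non-maximal, then I expect the forbidden configuration $\mathcal R_2$ to appear, or sincerity to fail: the top at $y$ would have to be nonzero while $y$ has only one or two peaks above it, and this forces a decomposition. Next, with $\P$ of height one, the Hasse diagram is a bipartite graph between minimal points and peaks. Excluding $\mathcal R_1$ and $\mathcal R_3$ bounds the valences by $2$. Excluding the cycles $\mathcal R_{4,n}$ makes the diagram a path. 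Sincerity then forces the zigzag shapes $\mathcal S^{(r)}_i$, with extra minimal points allowed only at the ends. A path ending in a minimal point of valence one is fine, whereas two dangling minimal points attached to one peak would give $\mathcal R_1$.

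The main obstacle is the reduction to height one and the uniqueness of the sincere module. For uniqueness, I would argue by dimension. Since the poset is a path of peaks and minimal points, socle-projectivity embeds $M_x$ into $M_{z}\oplus M_{z'}$ for the at most two peaks $z,z'$ above $x$. Representations of such a zigzag form a type $\AA$ quiver problem, namely subspace representations of a linear quiver. Its indecomposables are interval modules, so a sincere one must be the constant module. The same finiteness for the sincere peak-subposets then completes (a).
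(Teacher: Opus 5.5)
Your outer structure is sound. The ``if'' direction is correct. So is the graph-theoretic step: once $\P$ has height one, $\mathcal R_1$ and $\mathcal R_3$ bound valences by $2$, and the crowns $\mathcal R_{4,n}$ exclude cycles, so the Hasse diagram is a zigzag path, i.e.\ some $\mathcal S^{(r)}_i$. Uniqueness also works: there $\md\Bbbk\P$ is just $\rep$ of a zigzag $\AA_m$ quiver with no commutativity relations, its indecomposables are interval modules, and only the full interval is sincere.

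\textbf{The gap: the reduction to height one.} You only announce this step (``I expect \dots\ forces a decomposition''), yet it is the whole content of the ``only if'' part of (b).

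Only half of it follows from the forbidden list. If $x\prec y$ and $y$ lies under two peaks $z_1,z_2$, then $\{x,y,z_1,z_2\}$ is $\mathcal R_2$. If $y$ lies under a single peak $z$, nothing is forbidden: $\{x\prec y\prec z\}$, $\{x,x'\prec y\prec z\}$, $\{x\prec y\prec z\succ v\}$ and $\{x\prec y\prec z,\ x\prec z'\}$ are all of type $\AA$. So non-sincerity there must be proved by representation theory, and you give no argument.

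The obvious argument fails. Socle-projectivity makes ${_z}h_y$ injective, but the submodule generated by a complement of $(\operatorname{rad}M)_y$ in $M_y$ need not be a direct summand. Its image in $M_z$ can meet images coming from points not above $y$. For instance, in $\{x\prec y\prec z,\ v\prec z\}$ the constant module on $\{y,v,z\}$ is socle-projective with $(\operatorname{top}M)_y\neq 0$, but the submodule generated at $y$ is not a summand. A correct argument must use sincerity at the points below $y$ in an essential way, and that is missing.

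\textbf{Consequences for (a), and why the lists don't rescue it.} Since you deduce (a) from (b) via Proposition~\ref{ind}, (a) inherits the gap. You also cannot fill the hole by quoting Kleiner and Kosakowska: their lists classify sincere posets \emph{of finite type}, so you would need (a) beforehand, which is circular in your ordering.

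The paper does not reprove this lemma; it quotes it from \cite{schifflerserna}. For the type-$\DD$ analogue (Proposition~\ref{finiterepresentation}, Theorem~\ref{sincereposetAD}, Appendix~\ref{sincereposetsD}) it runs the other way. Finiteness comes first, from Simson's criterion, by checking that the minimal sp-infinite posets contain a forbidden peak-subposet. The sincere posets are then read off the published lists.

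If you want to keep your more self-contained order (b)$\Rightarrow$(a), you need a genuine proof of the following: a type-$\AA$ poset containing a point that is neither minimal nor maximal admits no sincere indecomposable socle-projective module.
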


Let $\P$ be a poset of type $\mathbb{A}$ associated with the quiver $Q^{F}$, where $Q$ is a quiver of Dynkin type $\mathbb{A}$ and $F$  an alien set for $Q$. According to \cite{schiffler10}, the quiver $Q$ defines a triangulation $T$ of an $n+3$-gon and the category of representations of the quiver $Q$ is equivalent to a category of diagonals denoted by $\mathcal{C}_T$, whose indecomposable objects are the diagonals that do not belong to $T$ and the irreducible morphisms are certain pivoting elementary moves. Following \cite{schifflerserna}  $\mathcal{C}_{(T,F)}$ denotes  the full subcategory of $\mathcal{C}_T$ generated by the so-called \textit{sp-diagonals}, which are defined based on the alien set $F$ and certain criteria of intersection among diagonals.  Compositions of pivoting elementary moves (with the same pivot) that do not pass through sp-diagonals are called \textit{pivoting sp-moves}. \\

The $\Bbbk$-linear additive functor
$$\Omega:\mathcal{C}_{(T,F)}\to\md_{sp}( \Bbbk \P)$$ from the category of sp-diagonals to the category of finitely generated socle-projective $ \Bbbk \P$-modules is such that for  any sp-diagonal $\gamma$ we have $ \Omega(\gamma)=M^{\gamma}=(M_x^{\gamma},{_yf}h^{\gamma}_{x})$
where $M^{\gamma}$ is  defined by the following identities:
    $$M_x^{\gamma}= \left\{ \begin{array}{ll}
\Bbbk &   \mbox{if } x\in\mbox{supp}\hspace{0.1cm}\gamma, 
\\ 0 &  \mbox{otherwise.} 
\end{array}
\right. \hspace{2mm} \mbox{and if }x\preceq y\in\P\mbox{ then } {_y}h_{x}^{\gamma}= \left\{ \begin{array}{lc}
\mbox{id}_\Bbbk &   \mbox{if }  x,y \in\supp\gamma,\\
 0 &  \mbox{otherwise.} 
\end{array}
\right.$$

On morphisms, $\Omega$ is defined as follows. By additivity, it is sufficient to define the functor on morphisms between sp-diagonals. For any pivoting sp-move $P:\gamma\to\gamma'$, we define the morphism $$\Omega(P)=~(\Omega(P)_x)_{x\in\P}:(M_x^{\gamma}, {_y}h^{\gamma}_x) \to  (M_x^{\gamma'}, {_y}h^{\gamma'}_x)$$ by the formula 
\[\Omega(P)_x=
\begin{cases}
\mathrm{id}_\Bbbk,  &\text{if }M^{\gamma}_x=M^{\gamma'}_x=\Bbbk, \\
0,            &\text{otherwise.}
\end{cases}\]

We recall the following results.

\begin{theorem} \cite[Theorem 4.5 and Corollary 4.6]{schifflerserna}\label{omega}
$\Omega$ is an equivalence of categories. Moreover, the following statements hold

\begin{itemize}
\item[(a)] The irreducible morphisms of $\mathcal{C}_{(T,F)}$ are direct sums of the generating morphisms given by pivoting sp-moves.
\item[(b)] Let $\gamma\xlongrightarrow{P_1}\beta\xlongrightarrow{P_2}\gamma'$ and $\gamma\xlongrightarrow{P_3}\beta'\xlongrightarrow{P_4}\gamma'$ be  compositions of two pivoting sp-moves where $\gamma$, $\gamma'$, and $\beta$ are sp-diagonals. Then 
\begin{itemize}
\item[(i)] The sequence $0\longrightarrow \gamma\longrightarrow \beta\oplus\beta'\longrightarrow \gamma'\longrightarrow 0$ is an AR-sequence if
 $\beta'$ is a sp-diagonal.
\item[(ii)] The sequence $0\longrightarrow \gamma\longrightarrow \beta\longrightarrow \gamma'\longrightarrow 0$ is an AR-sequence if $\beta'$ is either a boundary edge or a diagonal in $T$.
\item[(iii)] If \(\beta'\) is neither an sp-diagonal, a boundary edge, nor a diagonal in \(T\), then \(\gamma'\) is an indecomposable projective in \(\mathcal{C}_{(T,F)}\), and \(\gamma\) is an indecomposable injective in \(\mathcal{C}_{(T,F)}\).
\end{itemize}
\end{itemize} 
\end{theorem}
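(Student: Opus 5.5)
The plan is to prove the equivalence in the usual three steps (well-definedness, full faithfulness, density) and then to read off (a) and (b) by transporting Auslander--Reiten theory along $\Omega$.

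\emph{Well-definedness.} First I would check that for every sp-diagonal $\gamma$ the thin representation $M^{\gamma}$ satisfies conditions (a) and (b) of Proposition \ref{conditions,a,b}.
\begin{itemize}
\item Condition (a) amounts to $\supp\gamma$ being convex in $\P=\P_{Q^F}$: if $x\preceq y\preceq w$ with $x,w\in\supp\gamma$, then $y\in\supp\gamma$. Convexity makes the identity maps compose correctly.
\item Condition (b) amounts to every non-maximal $x\in\supp\gamma$ lying below some peak of $\P$ that is also in $\supp\gamma$.
\end{itemize}
Both properties should follow from the defining crossing conditions of sp-diagonals together with the axioms (a)--(d) of an alien set in Definition \ref{aliensetDef}. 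Next I would check that a pivoting sp-move $P:\gamma\to\gamma'$ gives a module morphism. This needs $\supp\gamma\cap\supp\gamma'$ to be a quotient of $M^{\gamma}$ and a submodule of $M^{\gamma'}$, i.e.\ closed upward in $\supp\gamma$ and downward in $\supp\gamma'$. Finally, $\Omega$ must kill the relations defining morphisms in $\mathcal C_{(T,F)}$. Since Hom spaces between thin modules are at most one-dimensional, this reduces to checking, on each mesh, that two compositions of sp-moves agree up to sign and that a composition leaving the Hom-hammock is sent to $0$.

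\emph{Full faithfulness.} Between two sp-diagonals, $\Hom_{\mathcal C_{(T,F)}}(\gamma,\gamma')$ is at most one-dimensional, inherited from $\mathcal C_T$ in type $\mathbb A$. I would show it is nonzero exactly when $\supp\gamma\cap\supp\gamma'$ is nonempty, upward closed in $\supp\gamma$ and downward closed in $\supp\gamma'$. That is precisely the condition for $\Hom(M^{\gamma},M^{\gamma'})\neq 0$, and in that case $\Omega$ sends the generating composition of sp-moves to the nonzero morphism. Full faithfulness then holds on indecomposables and extends by additivity. For injectivity on objects, $\supp\gamma$ should determine $\gamma$, because the support is read off from the crossings of $\gamma$ with the arcs of $T$.

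\emph{Density and parts (a), (b).} By Proposition \ref{ind}, every indecomposable socle-projective module is $T_{\mathcal S}(L)$ for a sincere peak-subposet $\mathcal S$ and a sincere $L$. By Lemma \ref{1}(b), $\mathcal S$ is one of $\mathcal S^{(r)}_1,\mathcal S^{(r)}_2,\mathcal S^{(r)}_3$ and $L$ is the thin module. The lift $T_{\mathcal S}(L)$ is again thin, with support the convex hull of $\mathcal S$ together with its non-maximal points. For each such peak-subposet of $\P_{Q^F}$ I would construct an sp-diagonal with exactly this support. Geometrically, $\mathcal S$ picks out a zigzag of peaks $\star_1,\dots,\star_r$ and the diagonal runs through the corresponding triangles of $T$; the extra vertex in $\mathcal S_2$ and $\mathcal S_3$ fixes which endpoint is used.

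This bijection between sp-diagonals and sincere peak-subposets, and especially checking the alien-arrow conditions on both sides, is where I expect the main work. I would handle it by induction on $|F|$. For $F=\emptyset$ one has the classical equivalence $\mathcal C_T\simeq\rep Q$. Adding one alien arrow $\alpha$ removes exactly the diagonals crossing the two arcs corresponding to $s(\alpha)$ and $t(\alpha)$ in a forbidden pattern, and it removes exactly the modules that stop being socle-projective.

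Once $\Omega$ is an equivalence, (a) and (b) follow by transport. Irreducible maps in $\md_{sp}(\Bbbk\P)$ between thin modules correspond to minimal support changes, hence to single pivoting sp-moves. The AR sequences are computed from meshes in $\mathcal C_T$, where $0\to\gamma\to\beta\oplus\beta'\to\gamma'\to0$ is exact. There are three cases for $\beta'$:
\begin{enumerate}
\item If $\beta'$ is an sp-diagonal, the mesh survives intact, giving (i).
\item If $\beta'$ is a boundary edge or lies in $T$, then $M^{\beta'}=0$ and the mesh degenerates to a short sequence, giving (ii).
\item If $\beta'$ is none of these, the mesh leaves the subcategory. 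In that case $\gamma'$ has no almost split sequence ending at it inside $\md_{sp}$, so it is projective there, and dually $\gamma$ is injective, giving (iii).
\end{enumerate}
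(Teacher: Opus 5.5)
The paper gives no proof of this statement; it quotes it from Schiffler--Serna. Your outline therefore has to stand on its own. The equivalence part is a sensible plan, though only sketched. There is one convention slip: with ${}_yh_x\colon M_x\to M_y$ for $x\preceq y$ and socles at the peaks, a quotient of $M^{\gamma}$ is supported on a down-set of $\supp\gamma$, and a submodule of $M^{\gamma'}$ on an up-set of $\supp\gamma'$. That is the opposite of what you wrote.

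\textbf{The gap: (a) and (b) are not inherited from meshes of $\mathcal C_T$.} You assume the Auslander--Reiten theory of $\mathcal C_{(T,F)}\simeq\md_{sp}(\Bbbk\P)$ is obtained from the meshes of $\mathcal C_T$ by deleting terms. This fails for three reasons.
\begin{enumerate}
\item $\md_{sp}(\Bbbk\P)$ is not closed under $\tau_{\mathcal C_T}$, so its almost split sequences are relative ones.
\item A pivoting sp-move is in general a composite of several elementary moves. The configurations in (b) are therefore rectangles in the AR quiver of $\mathcal C_T$, not meshes.
\item The only thing that transfers automatically is that a mesh of $\mathcal C_T$ whose terms are all sp-diagonals stays almost split in the subcategory.
\end{enumerate}

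\emph{Example.} Take $Q\colon 1\leftarrow 2\leftarrow 3\rightarrow 4$ with $F=\varnothing$, and write $[i,j]$ for the interval modules. The sp objects are $[1,1],[1,2],[1,3],[1,4],[3,4],[4,4]$. The elementary moves $[1,4]\to[2,4]\to[3,4]$ share a pivot and $[2,4]$ is not sp, so $[1,4]\to[3,4]$ is a single pivoting sp-move. From $[1,2]$, the other pivot runs through $[2,2]$ to a boundary edge. The almost split sequence $0\to[1,2]\to[1,4]\to[3,4]\to0$ of $\md_{sp}(\Bbbk\P)$ is an instance of (ii). It is not a degenerate mesh, because $\tau^{-1}_{\mathcal C_T}[1,2]=[2,4]$.

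\textbf{Consequences for (iii) and (a).} The same example breaks your argument for (iii). The $\mathcal C_T$-mesh ending at $[3,4]$ is $0\to[2,2]\to[2,4]\to[3,4]\to0$, which lies entirely outside the subcategory. Yet $[3,4]$ is not Ext-projective there, since $\Ext^1([3,4],[1,2])\cong D\Hom([1,2],[2,2])\neq0$. Because $\md_{sp}(\Bbbk\P)$ is functorially finite and extension-closed, every non-Ext-projective object has a relative almost split sequence. So ``no surviving mesh'' proves nothing.

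With alien arrows the mismatch is worse. For $Q\colon 1\to2\leftarrow3$ and $F=\{1\to3\}$, the module $[1,3]=\tau^{-1}_{\mathcal C_T}[2,2]$ becomes the projective $P_{\P}(1)$ of the chain $1\prec3\prec2$.

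Likewise, ``minimal support change'' does not give (a). The map $[1,4]\to[3,4]$ is irreducible in $\md_{sp}(\Bbbk\P)$, although it factors through $[2,4]$ in $\mathcal C_T$. You must also show that composites of elementary moves which change pivot at non-sp diagonals can be rewritten, via mesh relations, through pivoting sp-moves. You use this tacitly when you define $\Omega$ on sp-moves while taking Hom-spaces from $\mathcal C_T$.

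\textbf{How to close the gap.} Compute the relative AR theory of $\md_{sp}(\Bbbk\P)$ directly.
\begin{itemize}
\item For (i) and (ii), prove the sequences are exact. Then show, using the Hom-hammock description, that every non-split map from $\gamma$ to an sp object factors through the middle term.
\item For (iii), check directly that $\Ext^1(\Omega(\gamma'),X)=0$ and $\Ext^1(X,\Omega(\gamma))=0$ for every sp object $X$.
\item Alternatively, use the Auslander--Smal\o{} formula $\tau^{-1}_{sp}Y=\tau^{-1}Y/t(\tau^{-1}Y)$ for the torsion-free class $\md_{sp}(\Bbbk\P)\subseteq\md\Bbbk\P$.
\end{itemize}
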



\section{Posets of type $\mathbb{D}$}\label{posetstypeD}
In this section, we introduce a family of posets which we call posets of type \(\mathbb{D}\). Firstly, we note that the Dynkin quivers of type \(\mathbb{D}_4\) that are not viewed as posets of type \(\mathbb{A}\) are given in Table \ref{admissiblequivers}.
\begin{table}[h]
\begin{tabular}{|l|l|l|}
\hline 
\begin{tikzpicture}
\node (top) at (-0.7,1.2) {$D_1$};

\node (top) at (0,0) {$\star$};
\node [above of=top, node distance=0.7cm](center) {$\circ$};
\node [left  of=center, node distance=0.7cm] (left)  {$\circ$};
\node [right of=center, node distance=0.7cm] (right) {$\circ$};

    \draw[<-] [blue,  thick, shorten <=-2pt, shorten >=-2pt] (top) -- (left);
    \draw[<-] [blue, thick, shorten <=-2pt, shorten >=-2pt] (top) -- (right);
    \draw[<-] [blue, thick, shorten <=-2pt, shorten >=-2pt] (top) -- (center);\end{tikzpicture} &
    
    \begin{tikzpicture} 
    \node (top) at (0,1.2) {$D_2$};
    \node (top1) at (0,0) {$\star$};
    \node [right of=top1, node distance=0.7cm](top2)  {$\star$};
    \node (m) at (0.35,0.35) {$\circ$};
    \node (bellow) at (0.35, 0.7) {$\circ$};
     \draw[->] [blue,  thick, shorten <=-2pt, shorten >=-2pt] (top1) -- (m);
    \draw[->] [blue, thick, shorten <=-2pt, shorten >=-2pt] (top2) -- (m);
    \draw[->] [blue, thick, shorten <=-2pt, shorten >=-2pt] (m) -- (bellow);
    
   \end{tikzpicture}&
   
    \begin{tikzpicture} 
\node (top) at (0,1.2) {$D_3$};   
   \node (top3) at (0,0){$\star$};    
    \node [right of=top3, node distance=0.7cm](top4) {$\star$};
        \node [right of=top4, node distance=0.7cm] (top5) {$\star$};
        \node [above of=top4, node distance=0.7cm] (be) {$\circ$};
 \draw[->] [blue, thick, shorten <=-2pt, shorten >=-2pt] (be) -- (top3);
 \draw[->] [blue, thick, shorten <=-2pt, shorten >=-2pt] (be) -- (top4);
 \draw[->] [blue, thick, shorten <=-2pt, shorten >=-2pt] (be) -- (top5);
\end{tikzpicture}\\
\hline
\end{tabular}
\caption{Admissible quivers of type $\mathbb{D}_4$.}
\label{admissiblequivers}
\end{table}

A Dynkin quiver $Q$ of type $\mathbb{D}$ that contains as a full subquiver any of the  configurations  given in the table \ref{admissiblequivers}, where each vertex with symbol $\star$  is a sink  vertex in $Q$, is called \textit{admissible quiver}. 

\begin{example} The Dynkin quiver 

 \begin{center}
\begin{tikzcd}[row sep=tiny, column sep = small]
1\arrow[dr]&&4\arrow[dd]& \\
&2\arrow[dr]&&5\arrow[dl]\\
&&3&\\
\end{tikzcd}
\end{center}
is an admissible quiver because it contains as a subquiver the quiver $D_1$ in Table \ref{admissiblequivers}. However, the Dynkin  quiver of type $\mathbb{D}_5$ 

\begin{center}
\begin{tikzcd}[row sep= tiny, column sep = small]
1\arrow[dr]&&& \\
&2\arrow[dr]&&5\arrow[dl]\\
&&3\arrow[dr]&\\
&&&4 \\
\end{tikzcd}
\end{center}
is not an admissible quiver. In fact, the poset associated to this quiver is a poset of type $\mathbb{A}$.

\end{example}
 \begin{figure}[H] 
\begin{center}
\begin{tikzcd}[arrows=-,row sep= tiny, column sep = normal]
 & && & \underset{_{n-1}}{\circ} \\
 \underset{_1}{\circ} \arrow[r, "\alpha_1"] & \underset{_2}{\circ}\arrow[r,dotted, thick] & \underset{_{n-3}}{\circ} \arrow[r, "\alpha_{n-3}"] &
  \underset{_{n-2}}{\circ} \arrow[ur, "\alpha_{n-2}"] \arrow[dr, "\alpha_{n-1}"] &  \\
 & && & \underset{_{n}}{\circ}
\end{tikzcd}
\end{center}
\caption{Numbering of vertices and edges in $\overline{Q}$ ($\mathbb{D}_n$  case) }
\label{numberingD}
\end{figure}

The vertex $n-2$, the unique vertex of valence three in $\overline{Q}$, is called the \textit{branch vertex} of $Q$; the two vertices $n-1,n$ attached to it are called its \textit{leaves}.\\

Given a Dynkin  quiver $Q$ of type $\mathbb{D}_n$ whose underling graph $\overline{Q}$ is given in Figure \ref{numberingD}, we can identify two Dynkin subquivers of type $\mathbb{A}_{n-1}$ whose underling graphs are  
\begin{center}
\begin{tabular}{ccc}

\begin{tikzcd}[arrows=-,row sep= tiny, column sep = normal]
 \underset{_1}{\circ} \arrow[r, "\alpha_1"] & \underset{_2}{\circ}\arrow[r,dotted, thick] & \underset{_{n-2}}{\circ} \arrow[r, "\alpha_{n-2}"] &
  \underset{_{n-1}}{\circ}  
 \end{tikzcd}
& and &
\begin{tikzcd}[arrows=-,row sep= tiny, column sep = normal]
 \underset{_1}{\circ} \arrow[r, "\alpha_1"] & \underset{_2}{\circ}\arrow[r,dotted, thick] & \underset{_{n-2}}{\circ} \arrow[r, "\alpha_{n-1}"] &
  \underset{_{n}}{\circ}  
 \end{tikzcd}
\end{tabular}
\end{center}
 
These quivers are called the \textit{$\mathbb{A}$-subquivers of $Q$}.

\begin{definition}\label{defposetypeD}
A finite connected non-type $\mathbb{A}$ poset $\P$ is said to be \textit{poset of type $\mathbb{D}$} if $\P$ is the associated poset of an admissible quiver $Q$  adding a set $F$ of new arrows such that the restriction of $F$ to each  $\mathbb{A}$-subquiver $S$ of $Q$ is  an alien set for $S$ (possibly empty).  The quiver obtained from $Q$ by adding the set $F$ of new arrows is denoted by $Q^{F}$. 
\end{definition}
\begin{example} \label{three-peakposet}

The three-peak poset 
\begin{center}
\begin{tikzcd}[row sep= small, column sep = small]
& _8\arrow[d]\\
_3\arrow[d]\arrow[dr] & _6\arrow[d]\arrow[dr] &\\
_1\arrow[d]& _4\arrow[d]& _7\\
_2 & _5 \\
\end{tikzcd}
\end{center}
is a poset of type $\mathbb{D}$ because it is associated to the quiver 

\begin{center}
\begin{tikzcd}[row sep= tiny, column sep = small]
_1\arrow[dr]&&_3\arrow[dl]\arrow[ll, "\alpha",swap, blue]\arrow[dr]&&&&_8\arrow[dl]\\
&_2&&_4\arrow[dr]&&_6\arrow[ll,"\beta",swap, blue]\arrow[dl]\arrow[dr]&\\
&&&&_5&&_7\\ 
\end{tikzcd}
\end{center}

\textcolor{black}{The} set $F=\lbrace\alpha:3\rightarrow 1,\hspace{0.1cm}\beta:6\rightarrow 4\rbrace$ is an  alien set  for the $\mathbb{A}$-subquivers of the Dynkin quiver of type $\mathbb{D}$ that appears in black color.  
\end{example}

\begin{example}\label{onepeak-D}The  one-peak poset $\P$  whose  Hasse diagram is 
\begin{center}
\begin{tikzcd}[row sep= small, column sep = small]
&_1\arrow[d,"\alpha", blue]\arrow[rd,"\beta", blue]\arrow[dl]&&& \\
_2\arrow[dr]&_4\arrow[d]&_5\arrow[dl]\\
&_3&\\
\end{tikzcd}
\end{center}
is not a poset of type $\mathbb{A}$, but it is a poset of type $\mathbb{D}$ because  it can be considered as a quiver $Q^F$, where $F=\{\alpha: 1\to 4, \beta: 1\to 5\}$ and $Q$ is the admissible quiver obtained by deleting the arrows $\alpha$ and $\beta$ \textcolor{black}{(blue color); $\{\alpha\}$ and $\{\beta\}$ are alien sets} for the $\AA$-subquivers $1\to 2\to 3\leftarrow 4$ and $1\to 2\to 3\leftarrow 5$ respectively.
\end{example}
\begin{remark} \textcolor{black}{If} we add a new arrow between  the vertices 4 and 5 in the quiver of Example \ref{onepeak-D},  
the set $\{\alpha\}$ is an alien set for the quiver $1\to 2\to 3\leftarrow 4$ while the set $\{\beta\}$ is an alien set for  $1\to 2\to 3\leftarrow 5$. However, the associated poset is a poset of type $\mathbb{A}$.
\end{remark}

\begin{figure}[H]

 
\tikzset{
pattern size/.store in=\mcSize, 
pattern size = 5pt,
pattern thickness/.store in=\mcThickness, 
pattern thickness = 0.3pt,
pattern radius/.store in=\mcRadius, 
pattern radius = 1pt}
\makeatletter
\pgfutil@ifundefined{pgf@pattern@name@_t099xwfrj}{
\pgfdeclarepatternformonly[\mcThickness,\mcSize]{_t099xwfrj}
{\pgfqpoint{0pt}{0pt}}
{\pgfpoint{\mcSize}{\mcSize}}
{\pgfpoint{\mcSize}{\mcSize}}
{
\pgfsetcolor{\tikz@pattern@color}
\pgfsetlinewidth{\mcThickness}
\pgfpathmoveto{\pgfqpoint{0pt}{\mcSize}}
\pgfpathlineto{\pgfpoint{\mcSize+\mcThickness}{-\mcThickness}}
\pgfpathmoveto{\pgfqpoint{0pt}{0pt}}
\pgfpathlineto{\pgfpoint{\mcSize+\mcThickness}{\mcSize+\mcThickness}}
\pgfusepath{stroke}
}}
\makeatother
\tikzset{every picture/.style={line width=0.75pt}} 

\begin{tikzpicture}[x=0.75pt,y=0.75pt,yscale=-1,xscale=1]

\draw    (111.62,140.84) -- (131.74,161.08) ;
\draw [shift={(133.86,163.21)}, rotate = 225.17000000000002] [fill={rgb, 255:red, 0; green, 0; blue, 0 }  ][line width=0.08]  [draw opacity=0] (3.57,-1.72) -- (0,0) -- (3.57,1.72) -- cycle    ;
\draw  [dash pattern={on 0.84pt off 2.51pt}]  (136.27,167.41) -- (145.06,177.12) ;
\draw    (151.45,182.63) -- (171.56,202.87) ;
\draw [shift={(173.68,205)}, rotate = 225.17000000000002] [fill={rgb, 255:red, 0; green, 0; blue, 0 }  ][line width=0.08]  [draw opacity=0] (3.57,-1.72) -- (0,0) -- (3.57,1.72) -- cycle    ;
\draw    (345.73,22) -- (319.51,42.63) ;
\draw [shift={(317.15,44.49)}, rotate = 321.81] [fill={rgb, 255:red, 0; green, 0; blue, 0 }  ][line width=0.08]  [draw opacity=0] (3.57,-1.72) -- (0,0) -- (3.57,1.72) -- cycle    ;
\draw  [dash pattern={on 0.84pt off 2.51pt}]  (311.13,48.24) -- (270.52,77.47) ;
\draw    (264.5,80.47) -- (233.89,101.27) ;
\draw [shift={(231.41,102.96)}, rotate = 325.8] [fill={rgb, 255:red, 0; green, 0; blue, 0 }  ][line width=0.08]  [draw opacity=0] (3.57,-1.72) -- (0,0) -- (3.57,1.72) -- cycle    ;
\draw  [dash pattern={on 0.84pt off 2.51pt}]  (90,60.23) -- (111.06,60.23) ;
\draw    (345.73,22) -- (376.22,39.26) ;
\draw [shift={(378.83,40.74)}, rotate = 209.52] [fill={rgb, 255:red, 0; green, 0; blue, 0 }  ][line width=0.08]  [draw opacity=0] (3.57,-1.72) -- (0,0) -- (3.57,1.72) -- cycle    ;
\draw  [dash pattern={on 0.84pt off 2.51pt}]  (383.34,44.49) -- (396.88,52.73) ;
\draw    (405.91,57.23) -- (436.39,74.49) ;
\draw [shift={(439,75.97)}, rotate = 209.52] [fill={rgb, 255:red, 0; green, 0; blue, 0 }  ][line width=0.08]  [draw opacity=0] (3.57,-1.72) -- (0,0) -- (3.57,1.72) -- cycle    ;
\draw    (405.91,57.23) -- (379.68,77.86) ;
\draw [shift={(377.32,79.72)}, rotate = 321.81] [fill={rgb, 255:red, 0; green, 0; blue, 0 }  ][line width=0.08]  [draw opacity=0] (3.57,-1.72) -- (0,0) -- (3.57,1.72) -- cycle    ;
\draw    (239.84,137.97) -- (220.26,160.07) ;
\draw [shift={(218.28,162.32)}, rotate = 311.52] [fill={rgb, 255:red, 0; green, 0; blue, 0 }  ][line width=0.08]  [draw opacity=0] (3.57,-1.72) -- (0,0) -- (3.57,1.72) -- cycle    ;
\draw  [dash pattern={on 0.84pt off 2.51pt}]  (213.44,166.47) -- (199.84,181.89) ;
\draw    (199.84,181.89) -- (178.48,202.78) ;
\draw [shift={(176.34,204.87)}, rotate = 315.64] [fill={rgb, 255:red, 0; green, 0; blue, 0 }  ][line width=0.08]  [draw opacity=0] (3.57,-1.72) -- (0,0) -- (3.57,1.72) -- cycle    ;
\draw  [dash pattern={on 0.84pt off 2.51pt}]  (91.5,180.42) -- (112.56,180.42) ;
\draw    (240.43,137.69) -- (270.92,154.95) ;
\draw [shift={(273.53,156.43)}, rotate = 209.52] [fill={rgb, 255:red, 0; green, 0; blue, 0 }  ][line width=0.08]  [draw opacity=0] (3.57,-1.72) -- (0,0) -- (3.57,1.72) -- cycle    ;
\draw  [dash pattern={on 0.84pt off 2.51pt}]  (278.04,160.18) -- (291.58,168.42) ;
\draw    (300.6,172.92) -- (331.09,190.18) ;
\draw [shift={(333.7,191.66)}, rotate = 209.52] [fill={rgb, 255:red, 0; green, 0; blue, 0 }  ][line width=0.08]  [draw opacity=0] (3.57,-1.72) -- (0,0) -- (3.57,1.72) -- cycle    ;
\draw    (400,117) -- (396.7,148.45) ;
\draw [shift={(396.38,151.43)}, rotate = 276] [fill={rgb, 255:red, 0; green, 0; blue, 0 }  ][line width=0.08]  [draw opacity=0] (3.57,-1.72) -- (0,0) -- (3.57,1.72) -- cycle    ;
\draw  [dash pattern={on 0.84pt off 2.51pt}]  (390.86,157.68) -- (375.82,166.17) ;
\draw    (369.8,170.17) -- (339.22,190.03) ;
\draw [shift={(336.71,191.66)}, rotate = 327.01] [fill={rgb, 255:red, 0; green, 0; blue, 0 }  ][line width=0.08]  [draw opacity=0] (3.57,-1.72) -- (0,0) -- (3.57,1.72) -- cycle    ;
\draw    (439,131) -- (401.01,151.98) ;
\draw [shift={(398.38,153.43)}, rotate = 331.09000000000003] [fill={rgb, 255:red, 0; green, 0; blue, 0 }  ][line width=0.08]  [draw opacity=0] (3.57,-1.72) -- (0,0) -- (3.57,1.72) -- cycle    ;
\draw    (135.13,48.99) -- (165.61,66.25) ;
\draw [shift={(168.22,67.72)}, rotate = 209.52] [fill={rgb, 255:red, 0; green, 0; blue, 0 }  ][line width=0.08]  [draw opacity=0] (3.57,-1.72) -- (0,0) -- (3.57,1.72) -- cycle    ;
\draw  [dash pattern={on 0.84pt off 2.51pt}]  (172.74,71.47) -- (186.28,79.72) ;
\draw    (195.3,84.22) -- (225.79,101.48) ;
\draw [shift={(228.4,102.96)}, rotate = 209.52] [fill={rgb, 255:red, 0; green, 0; blue, 0 }  ][line width=0.08]  [draw opacity=0] (3.57,-1.72) -- (0,0) -- (3.57,1.72) -- cycle    ;
\draw    (112.62,236.84) -- (132.74,257.08) ;
\draw [shift={(134.86,259.21)}, rotate = 225.17000000000002] [fill={rgb, 255:red, 0; green, 0; blue, 0 }  ][line width=0.08]  [draw opacity=0] (3.57,-1.72) -- (0,0) -- (3.57,1.72) -- cycle    ;
\draw  [dash pattern={on 0.84pt off 2.51pt}]  (137.27,263.41) -- (146.06,273.12) ;
\draw    (152.45,278.63) -- (172.56,298.87) ;
\draw [shift={(174.68,301)}, rotate = 225.17000000000002] [fill={rgb, 255:red, 0; green, 0; blue, 0 }  ][line width=0.08]  [draw opacity=0] (3.57,-1.72) -- (0,0) -- (3.57,1.72) -- cycle    ;
\draw    (240.84,233.97) -- (221.26,256.07) ;
\draw [shift={(219.28,258.32)}, rotate = 311.52] [fill={rgb, 255:red, 0; green, 0; blue, 0 }  ][line width=0.08]  [draw opacity=0] (3.57,-1.72) -- (0,0) -- (3.57,1.72) -- cycle    ;
\draw  [dash pattern={on 0.84pt off 2.51pt}]  (214.44,262.47) -- (200.84,277.89) ;
\draw    (200.84,277.89) -- (179.48,298.78) ;
\draw [shift={(177.34,300.87)}, rotate = 315.64] [fill={rgb, 255:red, 0; green, 0; blue, 0 }  ][line width=0.08]  [draw opacity=0] (3.57,-1.72) -- (0,0) -- (3.57,1.72) -- cycle    ;
\draw  [dash pattern={on 0.84pt off 2.51pt}]  (92.5,276.42) -- (113.56,276.42) ;
\draw    (241.43,233.69) -- (271.92,250.95) ;
\draw [shift={(274.53,252.43)}, rotate = 209.52] [fill={rgb, 255:red, 0; green, 0; blue, 0 }  ][line width=0.08]  [draw opacity=0] (3.57,-1.72) -- (0,0) -- (3.57,1.72) -- cycle    ;
\draw  [dash pattern={on 0.84pt off 2.51pt}]  (279.04,256.18) -- (292.58,264.42) ;
\draw    (301.6,268.92) -- (332.09,286.18) ;
\draw [shift={(334.7,287.66)}, rotate = 209.52] [fill={rgb, 255:red, 0; green, 0; blue, 0 }  ][line width=0.08]  [draw opacity=0] (3.57,-1.72) -- (0,0) -- (3.57,1.72) -- cycle    ;
\draw    (415,232) -- (394.76,247.6) ;
\draw [shift={(392.38,249.43)}, rotate = 322.37] [fill={rgb, 255:red, 0; green, 0; blue, 0 }  ][line width=0.08]  [draw opacity=0] (3.57,-1.72) -- (0,0) -- (3.57,1.72) -- cycle    ;
\draw  [dash pattern={on 0.84pt off 2.51pt}]  (383,257) -- (374,263) ;
\draw    (370.8,265.17) -- (340.19,285.97) ;
\draw [shift={(337.71,287.66)}, rotate = 325.8] [fill={rgb, 255:red, 0; green, 0; blue, 0 }  ][line width=0.08]  [draw opacity=0] (3.57,-1.72) -- (0,0) -- (3.57,1.72) -- cycle    ;
\draw    (394.78,250.69) -- (412.72,266.05) ;
\draw [shift={(415,268)}, rotate = 220.57] [fill={rgb, 255:red, 0; green, 0; blue, 0 }  ][line width=0.08]  [draw opacity=0] (3.57,-1.72) -- (0,0) -- (3.57,1.72) -- cycle    ;
\draw  [color={rgb, 255:red, 255; green, 255; blue, 255 }  ,draw opacity=1 ][fill={rgb, 255:red, 208; green, 2; blue, 27 }  ,fill opacity=0.08 ][line width=0.75]  (230.48,97.5) -- (321,36) -- (120,36) -- cycle ;
\draw  [color={rgb, 255:red, 255; green, 255; blue, 255 }  ,draw opacity=1 ][fill={rgb, 255:red, 208; green, 2; blue, 27 }  ,fill opacity=0.08 ][line width=0.75]  (174,204.87) -- (234.34,140.5) -- (115,140.5) -- cycle ;
\draw  [color={rgb, 255:red, 255; green, 255; blue, 255 }  ,draw opacity=1 ][fill={rgb, 255:red, 208; green, 2; blue, 27 }  ,fill opacity=0.08 ][line width=0.75]  (176,296.87) -- (235,232.5) -- (112,232.5) -- cycle ;
\draw  [color={rgb, 255:red, 255; green, 255; blue, 255 }  ,draw opacity=1 ][fill={rgb, 255:red, 208; green, 2; blue, 27 }  ,fill opacity=0.08 ][line width=0.75]  (336,285.39) -- (412,233) -- (247,233) -- cycle ;
\draw  [color={rgb, 255:red, 255; green, 255; blue, 255 }  ,draw opacity=1 ][pattern=_t099xwfrj,pattern size=7.75pt,pattern thickness=0.2pt,pattern radius=0pt, pattern color={rgb, 155:red, 120; green, 0; blue, 0}] (397,114) -- (392,146) -- (336,185) -- (245,134) -- (292,129) -- cycle ;
\draw  [color={rgb, 255:red, 255; green, 255; blue, 255 }  ,draw opacity=1 ][fill={rgb, 255:red, 208; green, 2; blue, 27 }  ,fill opacity=0.09 ] (433,129) -- (397.38,146.43) -- (332.43,182.69) -- (241.43,131.69) -- (353.01,131.29) -- cycle ;

\draw (332.43,190.69) node [anchor=north west][inner sep=0.75pt]  [font=\scriptsize]  {$z$};
\draw (51,32) node [anchor=north west][inner sep=0.75pt]   [align=left] {(a)};
\draw (51,137) node [anchor=north west][inner sep=0.75pt]   [align=left] {(b)};
\draw (50,228) node [anchor=north west][inner sep=0.75pt]   [align=left] {(c)};
\draw (370.43,76.69) node [anchor=north west][inner sep=0.75pt]  [font=\scriptsize]  {$x$};
\draw (435.43,74.69) node [anchor=north west][inner sep=0.75pt]  [font=\scriptsize]  {$y$};
\draw (397.43,102.69) node [anchor=north west][inner sep=0.75pt]  [font=\scriptsize]  {$x$};
\draw (384,249) node [anchor=north west][inner sep=0.75pt]  [font=\scriptsize]  {$w$};

\draw (440.43,118.69) node [anchor=north west][inner sep=0.75pt]  [font=\scriptsize]  {$y$};
\draw (415,265) node [anchor=north west][inner sep=0.75pt]  [font=\scriptsize]  {$y$};
\draw (416.43,220.69) node [anchor=north west][inner sep=0.75pt]  [font=\scriptsize]  {$x$};
\draw (332.43,290.69) node [anchor=north west][inner sep=0.75pt]  [font=\scriptsize]  {$z$};
\draw (169.43,305.69) node [anchor=north west][inner sep=0.75pt]  [font=\scriptsize]  {$z'$};
\draw (170.43,206.69) node [anchor=north west][inner sep=0.75pt]  [font=\scriptsize]  {$z'$};
\draw (225.43,103.69) node [anchor=north west][inner sep=0.75pt]  [font=\scriptsize]  {$z$};

\end{tikzpicture}
\caption{General admissible quivers and  alien arrows}
\label{fig-generalquivers}
\end{figure}

\begin{remark}\label{rem-generalquivers} In Figure \ref{fig-generalquivers}, we observe the general form of the three types of admissible quivers. In cases (a), (b), and (c), the quiver contains a subquiver of the form \(D_2\) or \(D_3\), \(D_1\), and \(D_2\), respectively (see Table \ref{admissiblequivers}). On the other hand, the pink regions represent the place where alien arrows can be added. \textcolor{black}{In} the case (b), two pink regions have been drawn because alien arrows may use the vertex $x$ or the vertex $y$. However,  for any case, if we delete the vertex $x$ or $y$ and its incident arrows, the poset associated is a poset of type $\AA$.  \textcolor{black}{There are no alien arrows ending} in vertex $w$ of the part (c), because, in such a case, the poset resulting from removing vertex x is not a poset of type $\AA$.
\end{remark}

\begin{remark}\label{lem-avoid-Di} Let $\P$  be a poset of type $\DD$ associated to the  quiver $Q^F$. The following statements  are true.
\begin{itemize}
\item[(a)] If $Q^F$ is of type (a)  (Figure \ref{fig-generalquivers}) then $\P$ does not contain the poset $D_1$ as a peak-subposet.

\item[(b)] If \(Q^F\) is of type (b) (Figure \ref{fig-generalquivers}), then \(\P\) does not contain either the poset \(D_2\) or \(D_3\) as a peak-subposet.

\item[(c)] If $Q^F$ is of type (c)  (Figure \ref{fig-generalquivers}) then $\P$ does not contain either the poset $D_1$ or $D_3$ as a peak-subposet. 
\end{itemize}
    
\end{remark}

The \textit{distance} $d(x,y)$ between two elements $x$ and $y$ of a poset $\P$ is defined as the length of the shortest path connecting them in the corresponding undirected Hasse diagram of $\P$.

\begin{proposition}\label{descriptionD}
A connected finite poset $\P$ is of type $\DD$ if and only if the following conditions are satisfied:
\begin{itemize}
    
    \item[(i)] $\P$ contains as a peak-subposet some of the posets from Table \ref{admissiblequivers}.
    \item[(ii)] There exist elements $x$ and $y$ in $\P$ such that $d(x,y)=2$, $\P\setminus \{x\}$ and $\P\setminus \{y\}$ are posets of type $\AA$, and $x$ (respectively, $y$) is an extreme vertex of a Dynkin quiver of type $\AA$ satisfying Proposition \ref{0} for $\P\setminus \{y\}$ (respectively, $\P\setminus \{x\}$).
\end{itemize}
\end{proposition}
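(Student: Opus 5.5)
We prove the two implications separately. In both directions we use the combinatorial description of posets of type $\AA$ in Proposition \ref{0} and the structure of admissible quivers from Figure \ref{fig-generalquivers} and Remark \ref{rem-generalquivers}.

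\emph{Necessity.} Let $\P=\P_{Q^F}$ with $Q$ admissible of type $\DD_n$ and $F$ restricting to an alien set on each $\AA$-subquiver. Take $x=n-1$ and $y=n$ to be the two leaves at the branch vertex $n-2$, numbered as in Figure \ref{numberingD}.

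1. Condition (i). By definition, $Q$ contains one of $D_1,D_2,D_3$ as a full subquiver whose starred vertices are sinks of $Q$. We check that these vertices remain maximal in $\P$. Condition (b) of Definition \ref{aliensetDef} says no alien arrow starts at a sink, and condition (c) says alien arrows create no new comparabilities among the vertices of the configuration. Hence the configuration is a peak-subposet of $\P$.

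2. Distance. Since $n-1$ and $n$ are both adjacent to $n-2$ in $Q$, we have $d(x,y)\le 2$. We argue that $x$ and $y$ are never adjacent in the Hasse diagram. An arrow between them would have to be alien. But each alien arrow lies inside a single $\AA$-subquiver, and $n-1$, $n$ never lie in a common $\AA$-subquiver. So $d(x,y)=2$.

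3. Deleting a leaf. Deleting $x$ (respectively $y$) leaves the $\AA$-subquiver $S$ containing $y$ (respectively $x$), together with the restriction of $F$. By hypothesis this restriction is an alien set for $S$, so Proposition \ref{0} shows $\P\setminus\{x\}$ is of type $\AA$. Here the remaining leaf is an extreme vertex of $S$.

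4. Subtlety with alien arrows at a deleted leaf. One must check that removing $x$ does not change the order on the other vertices: a path through an alien arrow at $x$ might have created a relation among the remaining points. This is excluded by condition (c) and by the shape of the pink regions in Remark \ref{rem-generalquivers}. Specifically, any relation obtained through $x$ is already realized by a path in $Q$ avoiding $x$.

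\emph{Sufficiency.} Take $x,y$ as in (ii). Write $\P\setminus\{y\}=\P_{S_1^{F_1}}$ with $x$ an extreme vertex of the type $\AA$ quiver $S_1$. Similarly, write $\P\setminus\{x\}=\P_{S_2^{F_2}}$ with $y$ an extreme vertex of $S_2$.

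1. The common part. Both $S_1\setminus\{x\}$ and $S_2\setminus\{y\}$ are quivers of type $\AA$ on $\P\setminus\{x,y\}$ realizing the same poset $\P\setminus\{x,y\}$. We would like to choose them equal. To do so, we use the fact that in the proof of Proposition \ref{0} the Dynkin quiver is recovered from the Hasse diagram by discarding the alien arrows, i.e.\ the arrows that are Hasse edges not lying on the ``spine''.

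2. Gluing. Since $d(x,y)=2$, there is a common neighbour $v$ of $x$ and $y$. We argue that $v$ is the endpoint of $S_1\setminus\{x\}$ adjacent to $x$, and also the endpoint of $S_2\setminus\{y\}$ adjacent to $y$. Gluing then yields a Dynkin quiver $Q$ of type $\DD$ with branch vertex $v$ and leaves $x$ and $y$.

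3. Alien arrows. Set $F=F_1\cup F_2$. Its restriction to each $\AA$-subquiver is $F_1$ or $F_2$, hence alien.

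4. Admissibility. Condition (i) gives one of $D_1,D_2,D_3$ as a peak-subposet. We check that this peak-subposet sits as a full subquiver of $Q$ with sinks at the stars, so $Q$ is admissible. Finally, $\P$ is not of type $\AA$, because it contains $D_1$, $D_2$ or $D_3$, i.e.\ $\mathcal R_1$, $\mathcal R_2$ or $\mathcal R_3$ of Definition \ref{defposetypeA}.

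\emph{Main obstacle.} The hardest step is step 1 of sufficiency: showing that the two type-$\AA$ realizations can be chosen compatibly on $\P\setminus\{x,y\}$, and that $v$ is the vertex where the leaves attach rather than an interior vertex. My first attempt would be a case analysis on which configuration $D_i$ occurs in (i), matched against cases (a), (b), (c) of Figure \ref{fig-generalquivers}. In each case I would use the constraint $d(x,y)=2$ together with the extremality of $x$ and $y$ to pin down $v$ as the branch vertex.
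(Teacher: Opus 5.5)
\textbf{Sufficiency is not proved.} The step you flag as the main obstacle carries the whole direction: making the realizations of $\P\setminus\{y\}$ and $\P\setminus\{x\}$ agree on $\P\setminus\{x,y\}$, and showing that the common neighbour $v$ is where $x$ and $y$ attach. Your steps 2--4 are asserted on top of it without argument. Step 1 is not automatic either. If an alien arrow of $F_1$ touches $x$, then deleting $x$ from $S_1^{F_1}$ need not realize the induced poset $\P\setminus\{x,y\}$. For instance, with $S_1=a\to z\leftarrow b\leftarrow x$ and alien arrow $a\to x$, the relation $a\prec b$ exists only through $x$.

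The observation the paper starts from, which your plan lacks, is that $D_1,D_2,D_3$ are exactly $\mathcal R_1,\mathcal R_2,\mathcal R_3$ of Definition \ref{defposetypeA}. Hence every copy of $D_i$ as a peak-subposet of $\P$ contains \emph{both} $x$ and $y$; otherwise it would be a peak-subposet of the type-$\AA$ poset $\P\setminus\{x\}$ or $\P\setminus\{y\}$. This locates $x$, $y$ and their common neighbour $w$ inside the configuration. The paper then splits into the three orientations of $x-w-y$ ($w\to x$ and $w\to y$; $x\to w\leftarrow y$; $x\to w\to y$). It uses extremality of $x,y$ to control the alien sets (e.g.\ $F_x=F_y$ in the first case), and glues $Q_x$ and $Q_y$ along the extra arrow.

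\textbf{Necessity has false steps.} You take $x,y$ to be the leaves of $Q$, and treat adjacency and paths in $Q$ as if they were Hasse adjacency in $\P$ and the order of the induced subposets. Alien arrows break this.

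First example: $Q\colon 1\to2\to3\leftarrow4$, $5\to3$ (so $D_1$ sits at the sink $3$), with $F=\{4\to2\}$, which is alien for $1\to2\to3\leftarrow4$.
Then $4\prec2\prec3$, so $\{2,3,4,5\}$ is no longer a copy of $D_1$. Condition (c) does not prevent this, although (i) still holds via $\{1,4,5,3\}$. The Hasse edges are $\{1,2\},\{2,4\},\{2,3\},\{3,5\}$, so $d(4,5)=3$. Here (ii) holds only for $\{x,y\}=\{1,4\}$. The two type-$\AA$ realizations are then unique and glue to $1\to2\leftarrow4$, $2\to3\leftarrow5$. That quiver contains no $D_i$ as a full subquiver with sinks at the stars. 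So your sufficiency step 4 is false as stated: an admissible realization may have different leaves from the pair given by (ii).

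Second example: $Q\colon 1\to2\leftarrow3\leftarrow4$, $3\to5$ (so $D_2$ sits on $\{4,3,2,5\}$), with $F=\{1\to4\}$, which is alien for $1\to2\leftarrow3\leftarrow4$. Now $1\prec3$ holds only through $4$. So the induced subposet $\P\setminus\{4\}$ contains $1\prec3\prec2,5$, a copy of $\mathcal R_2$, and is not of type $\AA$. This refutes your necessity step 4; here (ii) holds instead for $\{2,5\}$.

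So $x,y$ must be chosen from the Hasse diagram of $\P$, not as the leaves of $Q$. The paper's necessity argument reads $x,y$ off Figure \ref{fig-generalquivers} and only checks that neither covers the other, so it is silent on this as well.

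Minor: condition (b) of Definition \ref{aliensetDef} restricts targets of alien arrows. That no alien arrow starts at a sink follows instead from (a) and (d).
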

\begin{proof} The fact that the two conditions are necessary  follows directly from Definition \ref{defposetypeD} and Figure \ref{fig-generalquivers}. \textcolor{black}{Remark \ref{rem-generalquivers} implies} (i). Furthermore, in any of the cases described in Figure \ref{fig-generalquivers}, we have $d(x,y)=2$ because in the poset $\P$, it cannot happen that $x$ covers $y$ or vice versa. The remaining part of (ii) can be observed in Figure \ref{fig-generalquivers}.\\

Conversely, we suppose that $\P$ is a poset that satisfies conditions (i) and (ii), and let $\Bbbk \P$ be its corresponding incidence algebra. Let $x$ and $y$ be the extreme vertices that satisfy condition (ii). Proposition \ref{0} implies that for $x \in \P$ (respectively, $y \in \P$), $\P_x = \P \setminus \{x\}$ (respectively, $\P_y = \P \setminus \{y\}$) is a poset of type $\AA$, where $Q_x$ (respectively, $Q_y$) is a quiver of type $\AA$, and $F_x$ (respectively, $F_y$) is an alien set, that is, $\P_{x}=\P_{Q_{x}^{F_{x}}}$ \textcolor{black}{(respectively, $\P_{y}=\P_{Q_{y}^{F_{y}}}$); $(Q_x)_{0} \cup \{x\}$ is the set} of vertices of the quiver $Q_{\Bbbk \P}$ associated with the incidence algebra $\Bbbk \P$. Given that $\P$ contains an admissible quiver $D_i$, we have $x, y \in D_i$, and since $d(x, y) = 2$, there exists a vertex $w \in \P$ such that $w \in D_i$,  and $x-w-y$ is an unoriented path in $D_{i}$ for $i = 1, 2, 3$ (note that $w \in \P_x$ and $w \in \P_y$). Thus, we have the following possibilities for the vertices $x$, $y$, and $w$:
\begin{center}
\begin{tabular}{ l l l}

    \begin{tikzpicture} 
    \node (top) at (0,1.2) {case (a)};
    \node (top1) at (0,0) {$x$};
    \node [right of=top1, node distance=1.4cm](top2)  {$y$};
    \node (m) at (0.7,0.7) {$w$};
     \draw[->] [blue,  thick, shorten <=-2pt, shorten >=-2pt] (m) -- (top1) node[midway, above] {$_{\alpha'}$};
    \draw[->] [blue, thick, shorten <=-2pt, shorten >=-2pt] (m) -- (top2) node[midway, above] {$_{\alpha}$};

   \end{tikzpicture}&
\begin{tikzpicture}
   \node (top) at (-0.7,1.2) {case (b)};

\node (top) at (0,0) {$w$};
\node [above of=top, node distance=0.7cm](center) {};
\node [left  of=center, node distance=0.7cm] (left)  {$x$};
\node [right of=center, node distance=0.7cm] (right) {$y$};

    \draw[<-] [blue,  thick, shorten <=-2pt, shorten >=-2pt] (top) -- (left) node[midway, above] {$_{\alpha'}$};
    \draw[<-] [blue, thick, shorten <=-2pt, shorten >=-2pt] (top) -- (right) node[midway, above] {$_{\alpha}$};
    \end{tikzpicture} &
   
    \begin{tikzpicture} 
\node (top) at (-0.1,1.2) {case (3)};   
   \node (top3) at (0,0){};    
    \node [right of=top3, node distance=0.7cm](top4) {};
        \node [right of=top4, node distance=0.7cm](top5) {$y$};
        \node [above of=top4, node distance=0.7cm] (be) {$w$};
        \node [above of=top4, node distance=1.4cm] (be1) {$x$};
 \draw[->] [blue, thick, shorten <=-2pt, shorten >=-2pt] (be) -- (top5) node[midway, above] {$_{\alpha}$};
  \draw[->] [blue, thick, shorten <=-2pt, shorten >=-2pt] (be1) -- (be) node[midway, right] {$_{\alpha'}$};
\end{tikzpicture}
\end{tabular}
\end{center}


If the Hasse diagram of $\P$ includes the diagram case (a), we have that either $D_{2}$ or $D_{3}$ is a subquiver of $Q_{\Bbbk \P}$ (see Remark \ref{rem-generalquivers}). If this contains $D_{2}$ (respectively, $D_{3}$) as a subquiver, there is $t \in Q_{x}$ such that $t \preceq w $ (respectively, $w \preceq t$) in $\P$, so, $\{t, w, x, y\}=D_{2}$ (respectively, $D_{3}$). Given that $\P_{x}$ and $\P_{y}$ are posets of type $\AA$, and that $x$ and $y$ are extreme vertices, there are no arrows in the alien sets $F_{x}$ and $F_{y}$ that either start or end at $x$ or $y$. Hence, $F_{x} = F_{y}$. Therefore, $\P_{x}\setminus \{y\}=\P_{y} \setminus \{ x\}$,  $(Q_{x})_{0} \setminus \{y\}=(Q_{y})_{0} \setminus \{x \}$, and $(Q_{x})_{1} \setminus \{ \alpha \}= (Q_{y})_{1} \setminus \{\alpha' \}$. It is then possible to define a quiver $Q'$ such that
$(Q')_0 = (Q_x)_0 \cup \{x\}$ and $(Q')_1 = (Q_y)_1 \cup \{\alpha \}$, where $D_{2}$ (respectively, $D_{3}$) is a subquiver of $Q'$. Consequently, $Q'$ is an admissible quiver of type $\DD$. Now, define the set $F_{Q'}$ as the set of all arrows in $Q_{\Bbbk \P}$ that are not arrows of $Q'$\textcolor{black}{; we have $F_{Q'} = F_{y}$}. Indeed, if $\beta$ is an arrow in $Q_{\Bbbk \P}$ such that $\beta \notin (Q')_{1}$ and $\beta \notin \textcolor{black}{F_{y}}$, then $\beta$ is not an arrow in $\P_{Q_{y}^{F_{y}}}$. Since $\beta \notin F_{x}$, we have $\beta \in (Q_{x})_{1}$. However, because $(Q_{x})_{1} \setminus \{ \alpha \}= (Q_{y})_{1} \setminus \{\alpha' \}$ it follows that $\beta = \alpha \in (Q')_{1}$, but that is a contradiction. If instead $\beta \in F_{y}$, then $\beta \notin \textcolor{black}{(Q_{y})_{1}}$ and $\beta \notin \textcolor{black}{(Q_{x})_{1}}$, so $\beta$ is an arrow of $Q_{\Bbbk \P}$.  Therefore, $F_{x}$ is an alien set of the $\mathbb{A}$-subquiver $Q_{x}$, and likewise, $F_{y}$ is an alien set of the $\mathbb{A}$-subquiver $Q_{y}$.\\

If the Hasse diagram of $\P$ corresponds to case (b), the only possibility is that $D_{1}$ is a subquiver contained in the quiver $Q_{\Bbbk \P}$ (see Remark \ref{rem-generalquivers}). In this case, if $w$ is a sink in $Q_{x}$, then there exists a vertex $t \in Q_{x}$ such that $t \preceq w$ in $\P$; hence, $t \in D_{1}$. If $w$ is not a sink in $Q_{x}$, then there exists a sink $w' \in Q_{x}$ and a vertex $t \in Q_{x}$ such that $t \preceq w'$ and $w \preceq w'$ in $\P$, which implies that $\{w', t, x, y\} = D_{1}$. We define the quiver $Q'$ as in case (a), where $D_{1}$ is a subquiver of $Q'$. Then $Q'$ is an admissible quiver of type $\DD$. In this case, $F_{Q'} = F_{x} \cup F_{y}$, which satisfies Definition \ref{defposetypeD}.\\

Finally, if the Hasse diagram of $\P$ contains diagram case (c), it satisfies that $Q_{\Bbbk \P}$ contains $D_{2}$  as a subquiver (see Remark \ref{rem-generalquivers}). For this case, there is a vertex $t \in Q_{x}$ where $\{w, t, x, y \}=D_{2}$ with $w \preceq t \in \P$. Following the arguments in case (a), we have that $\P$ is a poset of type $\DD$, where $F_{Q'}=F_{x}$.
\end{proof}

\begin{lemma} \label{lemmaP_1-p_7} If $\P$ is a poset of type $\DD$ then $\P$ does not contain any of the following posets as a peak-subposet.

\begin{adjustbox}{max width=\textwidth}
\begin{tabular} {| l| l |}
\hline 
$\P_1=$\begin{tikzcd}[row sep= small, column sep = tiny]
\circ\arrow[d]\arrow[dr] & \circ\arrow[d]\arrow[dr]  &  \vspace{0.4cm}\cdots  \vspace{0.4cm} \arrow[dr] \ &  \circ \arrow[d] \arrow[dlll,blue]\\
 \star & \star & \vspace{0.4cm}\cdots  \vspace{0.4cm} & \star 
\end{tikzcd}
&
$\P_2=$\begin{tikzcd}[row sep= small, column sep = tiny]
 & \circ \arrow[d]\arrow[dl]\arrow[dr]&&\circ\arrow[dl]\arrow[dr] &   \vspace{0.4cm}\cdots  \vspace{0.4cm} &\circ\arrow[dr]\arrow[dl] &&\circ\arrow[dl]\arrow[dr]\arrow[d] \\
\star & \star & \star && \vspace{0.4cm}\cdots  \vspace{0.4cm} & &\star & \star&\star\\
\end{tikzcd}

\\
\hline

$\P_3=$\begin{tikzcd}[row sep= small, column sep = tiny]
\circ\arrow[dr] & \circ\arrow[d] & \circ\arrow[dl] \arrow[dr]& &\arrow[dl]  \vspace{0.4cm}\cdots  \vspace{0.4cm} \arrow[dr]&&  \arrow[dl]\circ\arrow[dr] & \circ\arrow[d] & \circ\arrow[dl] \\
& \star && \star &  \vspace{0.4cm}\cdots  \vspace{0.4cm} &\star && \star \\
\end{tikzcd}
&

$\P_4=$\begin{tikzcd}[row sep= small, column sep = tiny]
&\circ\arrow[d]\arrow[dr]\arrow[dl] & &\circ \arrow[d]\arrow[dl] &\arrow[dl]  \vspace{0.4cm}\cdots  \vspace{0.4cm} & \circ\arrow[dl]\arrow[d] &\circ\arrow[dr]\arrow[dl] & \circ \arrow[d] &\circ\arrow[dl] \\
\star &\star &\star &\star &  \vspace{0.4cm}\cdots  \vspace{0.4cm} &\star && \star \\
\end{tikzcd}
\\
\hline
$\P_5=$\begin{tikzcd}[row sep= small, column sep = tiny]
& \circ\arrow[d]\\
& \circ\arrow[d]\arrow[dl] & \circ\arrow[d]\arrow[dl] & \circ\arrow[d]\arrow[dl] &\arrow[dl]\vspace{0.4cm}\cdots  \vspace{0.4cm}&  \circ\arrow[d]\arrow[dl] &\circ\arrow[d]\arrow[dl] \arrow[dr]\\
\star &\star &\star &\star &\vspace{0.4cm}\cdots  \vspace{0.4cm}&\star &\star &\star \\
\end{tikzcd}
&

$\P_6=$\begin{tikzcd}[row sep= small, column sep = tiny]
& \circ\arrow[d]&&&&&\circ\arrow[d]\\
& \circ\arrow[d]\arrow[dl] & \circ\arrow[d]\arrow[dl] & \circ\arrow[d]\arrow[dl] &\arrow[dl]\vspace{0.4cm}\cdots  \vspace{0.4cm}&  \circ\arrow[dr]\arrow[dl] &\circ\arrow[d] \arrow[dr]\\
\star &\star &\star &\star &\vspace{0.4cm}\cdots  \vspace{0.4cm}& &\star &\star \\
\end{tikzcd}
\\
\hline
$\P_7=$\begin{tikzcd}[row sep= small, column sep = tiny]
\circ\arrow[d]\\
\circ\arrow[d]\arrow[dr]&& \circ\arrow[d]\arrow[dl] &\circ\arrow[d]\arrow[dl] &\arrow[dl]\vspace{0.4cm}\cdots  \vspace{0.4cm} & \circ\arrow[dr]\arrow[dl] & \circ\arrow[d] & \circ\arrow[dl] \\
\star &\star & \star & \star &\vspace{0.4cm}\cdots  \vspace{0.4cm}&&\star \\
\end{tikzcd} &  \multicolumn{1}{c}{} 
\\
\cline{1-1}
\end{tabular}
\end{adjustbox}
\end{lemma}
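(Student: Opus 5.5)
The plan is a contradiction argument. Suppose $\P$ is of type $\DD$, realized as $\P=\P_{Q^F}$ with $Q$ admissible, and suppose some $\P_i$ ($1\le i\le 7$) sits in $\P$ as a peak-subposet $\mathcal{S}$. By Proposition \ref{descriptionD}(ii) there are elements $x,y$ with $d(x,y)=2$ such that $\P\setminus\{x\}$ and $\P\setminus\{y\}$ are posets of type $\AA$.

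The key steps are as follows.

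\begin{itemize}
\item[1.] First observe that being a peak-subposet is inherited. If $\mathcal{S}\subseteq\P$ is a peak-subposet and $x\notin\mathcal{S}$, then $\mathcal{S}\subseteq\P\setminus\{x\}$ is still a peak-subposet. The only subtlety is a peak $x$: removing it changes $\max$, but then $x\notin\mathcal S$ anyway, so the inclusion survives.
\item[2.] Next show that no $\P_i$ is itself a poset of type $\AA$. For $\P_1$, $\P_3,\dots,\P_7$, I exhibit a copy of one of $\mathcal{R}_1,\dots,\mathcal{R}_{4,n}$ from Definition \ref{defposetypeA}, using the visible patterns:
\begin{itemize}
\item[(i)] $\P_1$ is exactly the crown $\mathcal{R}_{4,n}$;
\item[(ii)] $\P_2$ and $\P_4$ have points covering three peaks, giving $\mathcal{R}_3$;
\item[(iii)] $\P_3$ has three points over a single peak, giving $\mathcal{R}_1$;
\item[(iv)] $\P_5,\P_6,\P_7$ contain a chain $\circ\to\circ$ over a point lying below two peaks, giving $\mathcal{R}_2$, or otherwise $\mathcal{R}_3$.
\end{itemize}
Hence type-$\AA$ posets cannot contain any $\P_i$, so $\mathcal{S}$ must contain both $x$ and $y$.
\item[3.] Now use the shape of $Q^F$ near the branch point, following Figure \ref{fig-generalquivers} and Remark \ref{rem-generalquivers}. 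In all three cases (a), (b), (c), the elements $x,y$ are leaves of the $\DD$-part: extreme vertices of the two $\AA$-subquivers, with no alien arrows incident to them in case (a). I also use Remark \ref{lem-avoid-Di}, which forbids certain $D_j$ as peak-subposets depending on the case.
\item[4.] Each $\P_i$ contains many copies of $D_1$, $D_2$ or $D_3$ as peak-subposets; for example, a point covering three peaks is a $D_3$. The plan is to show that, for $\mathcal{S}\cong\P_i$, the placement of $x,y$ (both in $\mathcal{S}$, at distance $2$) forces some $D_j$ forbidden by Remark \ref{lem-avoid-Di} for the relevant case. Alternatively, it forces an element of valence too large for $Q^F$ when $x$ or $y$ is a leaf.
\item[5.] Finally, use that $\P\setminus\{x\}$ and $\P\setminus\{y\}$ are type $\AA$, so $\mathcal{S}\setminus\{x\}$ and $\mathcal{S}\setminus\{y\}$ must avoid all $\mathcal{R}_j$. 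Typically each $\P_i$ contains two disjoint obstructions, for example the two ends of $\P_2$ or $\P_5$, or one obstruction avoiding two candidate points. Removing a single point then cannot destroy both, which gives the contradiction.
\end{itemize}

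The main obstacle is step 5 in the general-$n$ families such as $\P_1$ and $\P_3$, where the obstructions are not obviously disjoint: $\P_1$ minus a vertex is a zigzag, hence of type $\AA$. Here I would fall back on the quiver structure. $\P_1$ is a crown, and a crown requires an oriented-cycle-like configuration. In $Q^F$, such a cycle would have to use alien arrows that wrap across both $\AA$-subquivers. Restricted to one $\AA$-subquiver, those arrows violate condition (a) or (c) of Definition \ref{aliensetDef}, since they would not stay inside a single $\operatorname{Supp} I(z)$. So the crown cases are handled by the quiver argument rather than by removing points.
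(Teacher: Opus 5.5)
Your first reduction, that any copy $\mathcal S$ of some $\P_i$ must contain both $x$ and $y$, is the paper's first step. For $\P_2,\dots,\P_7$ your Step 5 is a valid and cleaner alternative to the paper's appeal to the shapes (a)--(c) of Figure \ref{fig-generalquivers}.

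Every copy of some $\mathcal R_j$ inside $\mathcal S$ (with maxima among those of $\mathcal S$) must contain both $x$ and $y$. So it suffices that each $\P_i$ has two such copies meeting in at most one element, and the two ends supply them:
\begin{itemize}
\item[] $\mathcal R_3,\mathcal R_3$ for $\P_2$; $\mathcal R_1,\mathcal R_1$ for $\P_3$; $\mathcal R_3,\mathcal R_1$ for $\P_4$;
\item[] $\mathcal R_2,\mathcal R_3$ for $\P_5$; $\mathcal R_2,\mathcal R_2$ for $\P_6$; $\mathcal R_2,\mathcal R_1$ for $\P_7$.
\end{itemize}
State this as ``meet in at most one element'' rather than ``disjoint''. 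In the shortest members the ends overlap: the two points of $\P_2$ covering three peaks can share a peak, and the two triples in $\P_3$ can share a point. It is $x\neq y$ that makes the argument work. In particular $\P_3$ is not an obstacle, and Steps 3--4 are unnecessary for these six families.

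The genuine gap is $\P_1$. As you observe, the crown minus any point is a zigzag, so removal only gives $x,y\in\mathcal S$, and the contradiction must come from $Q^F$. Your fallback does not supply it. A crown in $\P$ need not be a cycle in the underlying graph of $Q^F$, since the relations around it can be realised by paths that share segments. Nor is anything forcing an alien arrow out of a single $\operatorname{Supp} I(z)$: the relevant alien arrows (e.g.\ those starting at $x$ in case (b)) lie in one $\AA$-subquiver and satisfy (a) and (c) of Definition \ref{aliensetDef} there.

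What is missing is the local case analysis the paper uses.
\begin{itemize}
\item[] \emph{Case (a).} $x$ and $y$ are peaks whose strict down-sets both equal the down-set of the branch vertex $b$, and that set is a chain. An alien arrow ending in it would either join two comparable vertices of the chain, violating (c) or (d), or end at a non-extreme source, violating (b). Hence $x$ cannot lie above two incomparable crown points.
\item[] \emph{Case (c).} The only element covered by $w$ is $x$, since no alien arrow ends at $w$. So the two crown points below the peak $y$ lie in $\{w\}\cup\{v: v\preceq x\}$, hence are $\preceq x$. This contradicts the incomparability of crown points, $x$ being one of them.
\item[] \emph{Case (b).} Every element $\succeq x$ reaches, in $Q$, only the sink $z$: alien arrows starting there stay in $\operatorname{Supp} I(z)$ and cannot end at a turning source. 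So $x$ lies below a single maximal element and cannot be a crown point.
\end{itemize}
Note that ``only one arrow ends at $y$'', as the paper phrases cases (a) and (c), is not enough by itself, because two crown points could both lie below the unique in-neighbour of $y$. The chain (respectively, single-cover) property is what closes the argument.
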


\begin{proof}Let's consider \(\P\) as the poset \(\P_{Q^F}\) associated with a quiver \(Q^{F}\), where \(Q\) is an admissible Dynkin quiver of type \(\mathbb{D}\) and \(F\) is the set of new arrows. Let \(x, y, z, \) and $z'$ be vertices in \(Q\) as depicted in Figure \ref{fig-generalquivers}. Initially, suppose that \(\P\) contains \(\P_1\) as a peak subposet. According to Definition \ref{defposetypeD}, removing vertex \(x\) or \(y\) from \(\P\) results in a poset of type \(\AA\). By definition, a poset of type \(\AA\) cannot contain \(\P_1\) as a peak subposet. Therefore, \(x, y \in \P_1\). Notice that if \(\P_{Q^F}\) resembles cases (a) or (c) in Figure \ref{fig-generalquivers}, there is only one arrow ending at vertex \(y\). However, in \(\P_1\), each sink vertex has two incoming arrows, and arrows from \(F\) ending at \(y\) are not allowed. This contradicts the inclusion of \(y\) in \(\P_1\). Furthermore, if \(\P_{Q^F}\) resembles case (b), since \(x\) and \(y\) cannot be maximal elements in \(\P_1\) (as they are not sink vertices in \(Q^F\)), then \(x\) and \(y\) must be minimal elements in \(\P_1\) because they are extreme vertices by Proposition \ref{descriptionD}(b). According to the structure of \(\P_1\), \(x\) is expected to be less than the sink vertices $z$ and \(z'\)  as Figure \ref{fig-generalquivers}. However, this is impossible because the alien arrows starting at \(x\) cannot end in a non-extreme source vertex or any point outside the $\text{Supp }I(z)$. In conclusion, \(\P_1\) is not a peak-subposet of \(\P\).\\

Now, we suppose that $\P$ contain as a peak-subposet any poset $\P_2-\P_7$.  Since  the posets $\P\setminus \{x\}$ and $\P\setminus \{y\}$ are posets of type $\AA$, they do not contain as peak subposet any of the posets $\P_2- \P_7$ because in this case, $\P\setminus \{x\}$ or $\P\setminus \{y\}$ contains $D_1, D_2$ or $D_3$ as peak subposet (see Figure \ref{admissiblequivers}), but that contradicts Definition \ref{defposetypeA}. If we delete from $\P$ the vertex $x$ or $y$, we get a poset of type $\AA$.\\

If $Q^F$ is as in the cases (a) or (c) of Figure \ref{fig-generalquivers},  we cannot get a peak-subposet of this form. In the case (b), it is necessary to define an arrow ending  in the vertex where adjacent arrows to x and y are crossed. Thus, in this case, we get a one-peak subposet of the form 
\begin{center}
\begin{tikzcd}[row sep= small, column sep = tiny]
\circ\arrow[dr] &\circ\arrow[d] & \circ\arrow[dl] \\
&\circ\arrow[d] &\\
&\circ &
\end{tikzcd}
\end{center}  
but it does not belong to the list above.\end{proof}

\begin{lemma}\label{ltypeD} The posets of type $\mathbb{D}$ do not contain any of the following posets as a peak-subposet.\\

\begin{adjustbox}{max width=\textwidth}
\begin{tabular}{|l|l|l|}
\hline 

$\Q_1=$\begin{tikzcd}[row sep= small, column sep = small]
&\circ \arrow[d]&\circ \arrow[d]&& \\
\circ\arrow[dr]&\circ\arrow[d]&\circ\arrow[dl]\\
&\star &\\
\end{tikzcd}

&

$\Q_2=$\begin{tikzcd}[row sep= small, column sep = small]
&\circ\arrow[d]&\\
\circ \arrow[d]&\circ\arrow[dr]\arrow[dl]  &\circ\arrow[d]\\
\star &&\star \\ \\
\end{tikzcd}

&
$\Q_3=$\begin{tikzcd}[row sep= small, column sep = small]
\circ\arrow[d]&\circ\arrow[d]\arrow[dr]\arrow[dl]&\circ\arrow[d]\\
\star &\star &\star\\ 
\end{tikzcd}

\\
\hline
$\Q_4=$\begin{tikzcd}[row sep= small, column sep = small]
&&\circ \arrow[d]&& \\
\circ \arrow[d]\arrow[dr]&\circ\arrow[d]&\circ\arrow[dl]\\
\star & \star &\\
\end{tikzcd}
&
$\Q_5=$\begin{tikzcd}[row sep= small, column sep = small]
&\circ\arrow[d]\arrow[dr]&\\
\circ \arrow[d]&\circ\arrow[dr]\arrow[dl]  &\star\\
\star &&\star \\ \\
\end{tikzcd}
&

$\Q_6=$\begin{tikzcd}[row sep= small, column sep = small]
\color{white}{\circ}\\
\circ\arrow[d]\arrow[dr]&\circ\arrow[d]&\circ\arrow[d]\arrow[dl]\\
\star &\star &\star \\
\end{tikzcd}
\\
\hline

$\Q_7=$\begin{tikzcd}[row sep= small, column sep = tiny]
&&\circ\arrow[dl]\arrow[dr]&&\\
&\circ\arrow[dr]\arrow[dl] & &\circ\arrow[dl] &\circ\arrow[dll]\\
\star & & \star &&  \\
\end{tikzcd}
&
$\Q_8=$\begin{tikzcd}[row sep= small, column sep = tiny]
&&\circ\arrow[dl]\arrow[dr]&&\\
&\circ\arrow[dr]\arrow[dl] & &\circ\arrow[dl]\arrow[dr] &\\
\star & & \star && \star \\
\end{tikzcd}
&
\multicolumn{1}{c}{} 
\\
\cline{1-2}
\end{tabular}
\end{adjustbox}
\end{lemma}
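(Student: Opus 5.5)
We argue by contradiction, following the pattern of the proof of Lemma \ref{lemmaP_1-p_7}. Write $\P=\P_{Q^F}$ with $Q$ an admissible quiver of type $\DD$ and $F$ the set of new arrows, and let $x,y,w,z,z'$ be the distinguished vertices of Figure \ref{fig-generalquivers}. Suppose some $\Q_i$ ($1\le i\le 8$) is a peak-subposet of $\P$.

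The first step is a reduction. Each $\Q_i$ contains one of $D_1,D_2,D_3$ as a peak-subposet, which a poset of type $\AA$ cannot do by Definition \ref{defposetypeA}. I will check this case by case: $\Q_1$ contains $D_1$ (the star with three predecessors); $\Q_2$ and $\Q_5$ contain $D_2$; $\Q_3$ and $\Q_6$ contain $D_3$ or $D_2$; and similarly for $\Q_4,\Q_7,\Q_8$. Since $\P\setminus\{x\}$ and $\P\setminus\{y\}$ are of type $\AA$ (Proposition \ref{descriptionD}(ii)), and a peak-subposet of $\P$ not meeting $x$ remains a peak-subposet of $\P\setminus\{x\}$, we conclude that both $x$ and $y$ lie in the copy of $\Q_i$. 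Moreover $d(x,y)=2$ and, by Proposition \ref{descriptionD}, $x$ and $y$ are extreme vertices with a common neighbour $w$.

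The second step uses the local shape of $x$ and $y$. In cases (a) and (c) of Figure \ref{fig-generalquivers}, $y$ is a sink with a single incoming arrow, and no alien arrow ends at $y$ (nor at $w$ in case (c)). So $y$ must be a peak of $\Q_i$ covering exactly one element, and $x$ must sit as in the $D_2$ or $D_3$ configuration. I will go through the $\Q_i$ and show that every peak having exactly one lower cover, together with an element at distance two that could serve as $x$, forces either a second arrow into $y$ or an alien arrow that violates Definition \ref{aliensetDef}(a)–(c) for one of the $\AA$-subquivers. Remark \ref{lem-avoid-Di} already rules out several of the $\Q_i$ directly: in case (a) $\P$ has no $D_1$, which excludes $\Q_1$; in case (c) $\P$ has no $D_1$ or $D_3$, which excludes $\Q_1,\Q_3,\Q_6$.

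In case (b), $x$ and $y$ are sources and $w$ is their common target, so neither is a peak and both are minimal in the $\Q_i$-copy. Here Remark \ref{lem-avoid-Di}(b) excludes anything containing $D_2$ or $D_3$, leaving only the $\Q_i$ whose sole admissible configuration is $D_1$. For these, the extra comparabilities in $\Q_i$ would have to come from alien arrows leaving $x$ or $y$. Those arrows must end inside $\operatorname{Supp} I(z)$ or $\operatorname{Supp} I(z')$, and not at a non-extreme source, which is exactly the argument used for $\P_1$. I expect this to produce the contradiction, or else to force the one-peak subposet found in the proof of Lemma \ref{lemmaP_1-p_7}, which is not of the form $\Q_i$.

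The main obstacle is the second step. There I must identify, inside each $\Q_i$, all possible placements of $x$, $y$ and $w$ compatible with Figure \ref{fig-generalquivers}, and rule out each one via the alien-set axioms. I would organize this as a small table indexed by $i$ and by the case (a)/(b)/(c), using Remark \ref{lem-avoid-Di} first to prune as many entries as possible. Only the surviving ones, which I expect to be mainly $\Q_4,\Q_7,\Q_8$ in case (b), then need the explicit alien-arrow analysis.
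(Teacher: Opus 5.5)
Your reduction ($x$ and $y$ must lie in the copy of $\Q_i$) is exactly the paper's first step. Your plan for the rest is also the paper's strategy: read off from the $D_j$ inside $\Q_i$ which configuration of Figure \ref{fig-generalquivers} occurs, then use the valence of $y$ and the alien-set axioms. The gap is that this second step, which is the entire content of the lemma, is only announced. Worse, the pruning table you intend to drive it with is wrong:
\begin{itemize}
\item $\Q_6$ contains neither $D_2$ nor $D_3$: its three non-peak elements are pairwise incomparable, and none lies below all three peaks. It contains only $D_1$, namely the middle peak with the three elements pointing to it.
\item $\Q_4$ and $\Q_7$ also contain $D_1$, since each has a peak with three pairwise incomparable elements pointing to it.
\item $\Q_7$ and $\Q_8$ contain $D_2$: the top-row element, the left middle-row element, and the two peaks that element points to.
\end{itemize}
Remark \ref{lem-avoid-Di} therefore leaves exactly $\Q_2,\Q_3,\Q_5,\Q_8$ in case (a), $\Q_1,\Q_4,\Q_6$ in case (b), and $\Q_2$ in case (c). This is not ``$\Q_4,\Q_7,\Q_8$ in case (b)''. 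Following your table, you would discard $\Q_6$ in case (b) with no argument at all.

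Here is what must be supplied for the survivors.

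\emph{Case (c).} No alien arrow ends at $w$ or $y$, so the strict down-set of $y$ is $\{w,x\}$ together with the predecessors of $x$. Since $x$ lies in the copy, $y$ has a unique lower cover there. $\Q_2$ has no such peak; this is the paper's argument.

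\emph{Case (a).} $x$ and $y$ are peaks of the copy with the same strict down-set, namely the chain ending at $w$ (no alien arrow can land on it). $\Q_2$ has no peak of valence one, and $\Q_3$ has only one; this is the paper's argument. In $\Q_5$, and likewise in $\Q_8$, the two valence-one peaks have different lower covers.

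\emph{Case (b).} You are missing two facts.
\begin{itemize}
\item $z$ is the only peak above $x$ or $y$, whereas an element below two distinct peaks must be a non-extreme source between adjacent sinks. By Definition \ref{aliensetDef}(b), such a source is never an alien target.
\item Every element below $z$ other than $x,y$ lies either on the path from $w$ to $z$, hence above both $x$ and $y$, or on the other path into $z$, which is a chain starting at such a source.
\end{itemize}
The first fact removes $\Q_6$ at once. Two of its three non-peak elements lie below two peaks, leaving a single candidate for the two elements $x,y$. Together with the second fact, it also removes $\Q_1$ and $\Q_4$.

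Finally, your claim that $x$ and $y$ are minimal in the copy is not automatic. Definition \ref{aliensetDef}(b) allows alien arrows ending at an extreme source. The argument above does not need minimality; such predecessors lie on that other path.
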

\begin{proof}  Let $\P$ be a poset of type $\DD$ associated to the quiver $Q^F$. \textcolor{black}{If} $\P$ contains as peak subposet any of the posets $\Q\in \{\Q_1,\dots, \Q_8\}$ then $x,y\in\Q$ otherwise $\Q$ would be a poset of type $\AA$, but that is a contradiction.   We suppose that $\P$ contains the poset $\Q_1$ as a peak subposet.  Then $\P$ contains $D_1$ as a peak subposet, that is, $Q^F$ is as the case (b) of Figure \ref{fig-generalquivers}.  Thus, $x,y\in\min (\Q_1)$ and   $z\in \max(\Q_1)$. Hence, there exists a vertex $w\in \Q_1$ such that $x\prec w\prec z$ and $y\prec w\prec z$ in $\Q_1$, a contradiction.  Then $\P$ does not contain  $\Q_1$ as a peak-subposet.\\

 Now, we suppose that $\P$ contains $\Q_2$ as a peak-subposet. Clearly, $Q^F$ is not as in the case (a) or (c) of Figure \ref{fig-generalquivers} since $y\in\max( \Q_2)$ but $\Q_2$ has no a maximal vertex with valence one.  Thus, $Q^F$ is  as in the case (b) of Figure \ref{fig-generalquivers}. Necessarily, $\max(\Q_2)= \{z,z'\}$ and since alien arrows are defined either on the support of  $I_Q(z)$ or on the support of $I_Q(z')$, then $D_2$ (see Table \ref{admissiblequivers}) is not a peak-subposet of $\P$, a contradiction because $D_2$ is a peak-subposet of $\Q_2$. \\
 
 If $\P$ contains $\Q_3$ as a peak-subposet then $D_3$ is a peak-subposet of $\P$ and $x,y\in D_3$ otherwise $D_3$ would be a poset of type $\AA$, a contradiction.  Thus, $Q^F$ is as in the case (a) of Figure \ref{fig-generalquivers} and $x,y\in\max(\Q_3)$. Because there is no alien arrow starting or ending at $x$ or $y$, the valence of $x$ and $y$ is one; however, $\Q_3$ has exactly one maximal point of valence one, a contradiction.\\

  Similar arguments are employed to prove the cases  $\Q_4$, $\Q_5$, $\Q_6$, $\Q_{7}$, and $\Q_8$. 
\end{proof}

\begin{proposition} \label{finiterepresentation} The posets of type $\DD$ are of finite representation type.
\end{proposition}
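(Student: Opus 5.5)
We reduce the claim to the known finite-type criteria for $r$-peak posets, meaning that $\md_{sp}(\Bbbk\P)$ has only finitely many indecomposables up to isomorphism. The standard tool is the Kleiner–Kosakowska–Simson criterion. An $r$-peak poset $\P$ is of finite socle-projective representation type if and only if it contains none of the critical (minimal representation-infinite) $r$-peak posets as a peak-subposet: Kleiner's list for one peak, and Kosakowska's lists for several peaks in \cite{justina,justina1,justina2}. So the plan is to show that a poset of type $\DD$ avoids every poset in these critical lists. Equivalently, by Proposition \ref{ind} together with Simson's result \cite{simson3}, we must see that every sincere peak-subposet of $\P$ appears among the sincere representation-finite posets, each of which carries finitely many sincere indecomposables.

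The key steps are as follows. First, observe that the critical lists are closed under taking peak-subposets only in the trivial sense. Hence it suffices to check that no critical poset $\mathcal{K}$ embeds as a peak-subposet of $\P$. Second, exploit the structure given by Definition \ref{defposetypeD} and Proposition \ref{descriptionD}. There are two elements $x,y$ such that $\P\setminus\{x\}$ and $\P\setminus\{y\}$ are of type $\AA$, and posets of type $\AA$ are representation-finite by Lemma \ref{1}(a). Consequently, if a critical $\mathcal{K}$ were a peak-subposet of $\P$ missing $x$ or $y$, it would be a peak-subposet of a type $\AA$ poset, which is impossible. Thus any critical $\mathcal{K}\subseteq\P$ must contain both $x$ and $y$, with $d(x,y)=2$ inside $\P$. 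In addition, $x$ and $y$ are extreme vertices of the underlying type $\AA$ quivers, so they have small valence and are not endpoints of alien arrows, apart from the allowed situations of Figure \ref{fig-generalquivers}.

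Third, run through the critical lists family by family, using the three shapes (a), (b), (c) of Figure \ref{fig-generalquivers}. In each case we use the restrictions of Remark \ref{lem-avoid-Di}: which of $D_1,D_2,D_3$ can occur. Any critical poset either contains one of the forbidden $D_i$ for the relevant shape, or contains one of the configurations $\P_1$–$\P_7$ of Lemma \ref{lemmaP_1-p_7} or $\Q_1$–$\Q_8$ of Lemma \ref{ltypeD}. Indeed, those two lemmas were evidently prepared as the minimal obstructions extracted from the critical lists: the cyclic and chain-like multi-peak families, and the small one-, two- and three-peak critical forms such as $(1,1,1,1)$-type configurations above a peak. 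Hence each critical poset is excluded by one of Remark \ref{lem-avoid-Di}, Lemma \ref{lemmaP_1-p_7}, or Lemma \ref{ltypeD}.

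The main obstacle is the bookkeeping in this third step: matching every critical poset in Kosakowska's tables for arbitrary $r$ to one of the excluded patterns. I would first treat $r=1$ via Kleiner's five critical one-peak posets. For these, a type $\DD$ poset with one peak is of shape (b) with at most $D_1$ and width-bounded alien arrows, and $\Q_1$ already kills the relevant ones. Next I would treat $r=2,3$ using $\Q_2$–$\Q_8$. For $r\ge 3$, I would note that the infinite families are exactly $\P_1$–$\P_7$, which Lemma \ref{lemmaP_1-p_7} excludes. As a fallback, if some critical form is not visibly covered, I would use the fact that it must contain both $x$ and $y$. We then delete $x$: a critical poset minus a non-peak point is representation-finite, and this residual is constrained by the type $\AA$ structure. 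A direct comparison of valences at $x,y$, which have valence at most two, then rules it out.
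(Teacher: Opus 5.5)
Your skeleton matches the paper's. You use a forbidden-peak-subposet criterion for finiteness of $\md_{sp}(\Bbbk\P)$. You observe that any obstruction inside $\P$ must contain both $x$ and $y$, because $\P\setminus\{x\}$ and $\P\setminus\{y\}$ are of type $\AA$. You then finish with Lemmas \ref{lemmaP_1-p_7} and \ref{ltypeD}.

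The gap is your third step: it carries the whole proof and is only asserted. You never exhibit the list of minimal representation-infinite $r$-peak posets you propose to run through. Kosakowska's tables \cite{justina,justina1,justina2}, like \cite{kleiner75}, classify \emph{sincere representation-finite} posets and their sincere modules, not critical ones. So your claims that the two lemmas were ``extracted from the critical lists'' and that for $r\geq 3$ ``the infinite families are exactly $\P_1$--$\P_7$'' have nothing behind them. Your ``equivalent'' reformulation through Proposition \ref{ind} is circular: to place a sincere peak-subposet of $\P$ in the representation-finite lists, you must already know it is representation-finite. Your fallback also fails as stated. It uses that $x$ and $y$ have valence at most two, which is false in shape (b), where alien arrows may be incident to $x$ or $y$ (Remark \ref{rem-generalquivers}).

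The missing ingredient is exactly what the paper's one-line proof uses: Simson's theorem \cite[Theorem 3.1]{simson3}. As quoted in the paper, a representation-infinite poset contains as a peak-subposet either a poset of Euclidean shape $\tilde{\AA}_n$ or $\tilde{\DD}_n$, or one of $\Q_1$--$\Q_8$. The Euclidean-shape ones are the families $\P_1$--$\P_7$ excluded by Lemma \ref{lemmaP_1-p_7}, and the $\Q_j$ are excluded by Lemma \ref{ltypeD}. With this theorem, no family-by-family bookkeeping is needed.

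The $\Q_j$ are not themselves critical, so calling them ``minimal obstructions'' misdescribes them. For example, $\Q_1$ is $(1,2,2)$ with a peak adjoined, an $\mathbb{E}_6$ shape that is representation-finite by Kleiner's criterion. What matters is containment. Kleiner's one-peak obstructions $(2,2,2)$, $(1,3,3)$, $(1,2,5)$ and $(N,4)$, each with a peak adjoined, all contain $\Q_1$. However, $(1,1,1,1)$ with a peak adjoined contains no $\Q_j$ and must be caught on the Euclidean ($\tilde{\DD}_4$) side. So $\Q_1$ alone does not settle the case $r=1$.
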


\begin{proof} According to \cite[Theorem 3.1]{simson3}, any poset of infinite representation type is either the poset associated with a Euclidean quiver of type $\tilde{\mathbb{A}}{n}$ or $\tilde{\mathbb{D}}{n}$, or it contains as a peak-subposet one of the posets described in Lemma \ref{ltypeD}.
\end{proof}

\begin{lemma} \label{lemmasubpeakposet}
Let $\P$ be a poset of type $\DD$, any  connected peak-subposet of $\P$ is a poset of type $\DD$ or $\mathbb{A}$. 
\end{lemma}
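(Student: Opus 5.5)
We argue through the characterization of posets of type $\DD$ from Definition~\ref{defposetypeD} and Proposition~\ref{0}. Write $\P=\P_{Q^F}$ with $Q$ an admissible quiver of type $\DD_n$ and $F$ the set of new arrows. Let $x,y$ be the two distinguished leaves as in Figure~\ref{fig-generalquivers}, so that $\P\setminus\{x\}$ and $\P\setminus\{y\}$ are posets of type $\AA$. Let $\P'$ be a connected peak-subposet of $\P$. We split according to whether $\P'$ contains both $x$ and $y$.

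\emph{Case 1: $x\notin\P'$ or $y\notin\P'$.} Say $x\notin\P'$. Then $\P'$ is a full subposet of $\P\setminus\{x\}$. Moreover $\max\P'\subseteq\max\P\cap(\P\setminus\{x\})\subseteq\max(\P\setminus\{x\})$, so $\P'$ is a peak-subposet of the type $\AA$ poset $\P\setminus\{x\}$. The forbidden peak-subposets $\mathcal R_1,\dots,\mathcal R_{4,n}$ of Definition~\ref{defposetypeA} cannot occur in $\P'$. Indeed, peak-subposets of peak-subposets are peak-subposets, so any such occurrence would already occur in $\P\setminus\{x\}$. Since $\P'$ is connected, it is of type $\AA$.

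\emph{Case 2: $x,y\in\P'$.} If $\P'$ contains none of $D_1,D_2,D_3$ as a peak-subposet, we claim $\P'$ is of type $\AA$. Among the forbidden posets of Definition~\ref{defposetypeA}, $\mathcal R_1,\mathcal R_2,\mathcal R_3$ are exactly $D_1,D_2,D_3$, so these are excluded by assumption. It remains to exclude $\mathcal R_{4,n}$, a crown with $n+2$ peaks. Any crown contained in $\P'$ is contained in $\P$, and it must pass through both $x$ and $y$, since otherwise it would lie in a type $\AA$ poset. By the shape of Figure~\ref{fig-generalquivers}, a crown through both leaves forces $x$ and $y$ to share a common neighbour $w$. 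Combining this with the alien arrows confined to $\operatorname{Supp} I(z)$ yields one of the configurations $\P_1,\dots,\P_7$ of Lemma~\ref{lemmaP_1-p_7} as a peak-subposet, which is a contradiction. Hence $\P'$ is of type $\AA$.

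Otherwise $\P'$ contains some $D_i$ as a peak-subposet, and we verify the criterion of Proposition~\ref{descriptionD} for $\P'$. Condition (i) holds by assumption. For (ii), use the same $x,y$. The set $\P'\setminus\{x\}$ is a connected peak-subposet of $\P\setminus\{x\}$; connectivity holds because $x$ is a leaf of the underlying tree structure. By Case 1, $\P'\setminus\{x\}$ is therefore of type $\AA$, and likewise $\P'\setminus\{y\}$.

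The distance condition $d(x,y)=2$ should survive passage to $\P'$. The common neighbour $w$ of $x$ and $y$ lies in every connected peak-subposet containing both $x$ and $y$, because the Hasse diagram near the branch vertex forces any path between the leaves to go through $w$. Finally, $x$ must remain an extreme vertex of the type $\AA$ quiver realizing $\P'\setminus\{y\}$. This follows by restricting the quiver $Q_y^{F_y}$ to the full subquiver on $\P'\setminus\{y\}$. That restriction is again a Dynkin $\AA$ quiver with an alien set: conditions (a)--(d) of Definition~\ref{aliensetDef} are inherited, after replacing a removed interior path by the induced covering arrows. In this quiver $x$ is still an endpoint.

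The main obstacle is this last restriction step, namely showing that deleting non-peak vertices from $Q^F$ again yields a Dynkin quiver of type $\AA$ (resp.\ admissible of type $\DD$) plus an alien set, rather than merely a poset. We would first handle it by induction, deleting one non-maximal vertex at a time. Removing an interior vertex $v$ of the $\AA$-path merges its two neighbouring arrows, or alien arrows through $v$, into covering relations, and we would check conditions (b) and (c) of Definition~\ref{aliensetDef} directly. Then Proposition~\ref{descriptionD} gives that $\P'$ is of type $\DD$.
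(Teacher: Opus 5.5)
Your type-$\AA$ branch is essentially the paper's argument. The paper also splits on whether some $D_i$ occurs, excludes $\mathcal R_1,\mathcal R_2,\mathcal R_3$ because they are exactly $D_1,D_2,D_3$, and excludes $\mathcal R_{4,n}$ by Lemma~\ref{lemmaP_1-p_7}. You can be much shorter there: $\mathcal R_{4,n}$ is literally the crown $\P_1$ of that lemma, and a peak-subposet of $\P'$ is a peak-subposet of $\P$. So no discussion of crowns passing through $x,y$ is needed, and your Case 1 is subsumed.

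The gap is in the other branch. The paper only asserts it; you try to settle it by checking Proposition~\ref{descriptionD} for $\P'$ with the \emph{same} pair $x,y$. The two facts you invoke are false as stated.
\begin{itemize}
\item The Hasse diagram of a peak-subposet is not a subtree of $Q$, so ``$x$ is a leaf'' gives no connectivity. In Example~\ref{three-peakposet} (where $x=8$, $w=6$, $y=7$, $z=5$), the connected peak-subposet $\{5,7,8\}$ falls apart when $8$ is removed.
\item $w$ need not lie in a connected peak-subposet containing $x$ and $y$: $\{7,8\}$ in the same example is one.
\end{itemize}

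More seriously, the conclusion $d_{\P'}(x,y)=2$ itself fails even when $\P'$ contains some $D_i$. Take configuration (b) of Figure~\ref{fig-generalquivers}. Let $u\to a\to\cdots\to z$ be the other path of $Q$ ending at $z$ (the one not containing $x,y$), with $a\neq z$. Take the alien arrow $x\to a$; it is one of the arrows at $x$ allowed in Remark~\ref{rem-generalquivers}, and it satisfies Definition~\ref{aliensetDef}. Then $\P'=\{x,y,u,a,z\}$ is a connected peak-subposet containing $D_1=\{x,y,u,z\}$. Its Hasse diagram is $x\to a\leftarrow u$, $a\to z\leftarrow y$. So $x$ and $y$ have no common Hasse neighbour in $\P'$, $d_{\P'}(x,y)=3$, and condition (ii) of Proposition~\ref{descriptionD} fails for this pair.

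This $\P'$ is still of type $\DD$, but only with new data. One choice is the admissible quiver $x\to a\to z$, $u\to z$, $y\to z$ with alien arrow $u\to a$, whose leaves at the branch vertex are $u,y$. For Proposition~\ref{descriptionD}, the pair $x,u$ works instead. So the missing idea is how to choose, for a given $\P'$, a new admissible quiver or a new distinguished pair; it cannot simply be inherited from $Q$. Your sketched restriction step, which you yourself flag as unproved, does not address this. As written, only condition (i) of Proposition~\ref{descriptionD} is established in the $\DD$ branch.
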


\begin{proof}
Let $\P$ be a poset of type $\DD$ and suppose $\P'$ a connected peak-subposet of $\P$, then there is a connected quiver $Q'$ whose vertices are the same elements of $\P'$ and each arrow from $x$ to $y$ in $Q'$  is given by the path from $x$ to $y$ in the quiver $Q^{F}$ associated with $\P$. If $Q'$ contains  $D_1$, $D_{2}$ or $D_{3}$, then $\P'$ is a poset of type $\DD$. If $Q'$ does not contain to $D_1$, $D_{2}$ or $D_{3}$, then $\P'$ does not contain $\mathcal{R}_{1}$, $\mathcal{R}_{2}$, and $\mathcal{R}_{3}$ as a peak-subposet. Also, by using Lemma \ref{lemmaP_1-p_7}, $\P'$ does not contain $\mathcal{R}_{4,n}$ as a peak-subposet. Thus $\P'$ is a poset of type $\mathbb{A}$.
\end{proof}

\begin{example}
Let $\P$ be the poset in Example \ref{onepeak-D}. The connected peak-subposets  of $\P$ are   $\{3\}$,  $\{1,3\}$, $\{2,3\}$, $\{3,4\}$, $\{3,5\}$, $\{1,2,3\}$, $\{1,3,4\}$, $\{1, 3, 5\}$, $\{2, 3,4 \}$, $\{2, 3, 5 \}$, $\{3, 4, 5\}$, $\{1, 2, 3, 4\}$, $\{1, 2, 3, 5\}$, $\{ 1, 3, 4, 5 \}$, $\{2, 3, 4, 5\}$ and $\P$. In this case, $\{2, 3, 4, 5\}$ and $\P$ are the only posets of type $\DD$. 
\end{example}

\begin{theorem} \label{sincereposetAD}
Let $\P$ be a poset of type $\DD$. If $M$ is an indecomposable object of $\md_{sp}(\Bbbk \P)$, then $\csupp M$ is a sincere poset of type $\DD$ or $\mathbb {A}$.
\end{theorem}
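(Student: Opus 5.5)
The plan is to combine Proposition \ref{ind} with Lemma \ref{lemmasubpeakposet}. Let $M$ be an indecomposable object of $\md_{sp}(\Bbbk\P)$ and put $\mathcal{S}=\csupp M$. We need three facts: $\mathcal{S}$ is a peak-subposet of $\P$, $\mathcal{S}$ is sincere, and $\mathcal{S}$ is connected. Once these hold, Lemma \ref{lemmasubpeakposet} shows that $\mathcal{S}$ is of type $\DD$ or $\AA$, which is the claim.

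\emph{Peak-subposet and sincerity.} Proposition \ref{finiterepresentation} says $\md_{sp}(\Bbbk\P)$ is of finite representation type, so Proposition \ref{ind} applies. It gives $M\cong T_{\mathcal{S}}(L)$, where $\mathcal{S}=\csupp M$ is a sincere peak-subposet of $\P$ and $L$ is the restriction of $M$ to $\mathcal{S}$. By construction $L$ is a sincere socle-projective $\Bbbk\mathcal{S}$-module. Hence $\mathcal{S}$ is a peak-subposet and a sincere poset.

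\emph{Connectedness.} This is the only step needing an argument, and I expect it to be the main, though mild, obstacle. Suppose $\mathcal{S}=\mathcal{S}_1\sqcup\mathcal{S}_2$, with both parts nonempty and no comparabilities between them. Then $\Bbbk\mathcal{S}\cong\Bbbk\mathcal{S}_1\times\Bbbk\mathcal{S}_2$, so $L$ splits as $L|_{\mathcal{S}_1}\oplus L|_{\mathcal{S}_2}$. Each summand is nonzero because $L$ is sincere, and each is socle-projective. So $L$ is decomposable. Since $T_{\mathcal{S}}$ is additive, $M\cong T_{\mathcal{S}}(L|_{\mathcal{S}_1})\oplus T_{\mathcal{S}}(L|_{\mathcal{S}_2})$.

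To contradict the indecomposability of $M$, we must check that both summands are nonzero. One could use that the functor $T_{\mathcal{S}}$ is fully faithful (Simson), but a direct argument suffices. A summand $T_{\mathcal{S}}(L_i)$ restricts back to $L_i$ on $\mathcal{S}_i$, because $e_{\mathcal{S}}\Bbbk\P e_{\mathcal{S}}=\Bbbk\mathcal{S}$. Therefore $T_{\mathcal{S}}(L_i)\neq 0$, and $\mathcal{S}$ must be connected.

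Alternatively, $\P$ may be replaced by the connected component of its Hasse diagram containing the support, but the splitting argument above is cleaner.

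\emph{Conclusion.} $\mathcal{S}$ is now a connected peak-subposet of the type $\DD$ poset $\P$, so Lemma \ref{lemmasubpeakposet} shows it is of type $\DD$ or $\AA$. Together with sincerity, this proves the theorem. If one prefers to avoid the properties of $T_{\mathcal{S}}$ in the connectedness step, there is a more hands-on route. Condition (b) of Proposition \ref{conditions,a,b} forces every nonzero $M_x$ to map nontrivially into some peak above $x$. Combined with indecomposability, this gives that the coordinate support is connected through comparabilities.
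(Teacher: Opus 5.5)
Your proof is correct and takes the same route as the paper: Proposition~\ref{ind} shows that $\csupp M$ is a sincere peak-subposet, and Lemma~\ref{lemmasubpeakposet} then gives type $\DD$ or $\AA$. The only difference is that the paper asserts without argument that $\csupp M$ is connected, whereas you justify it. Your justification is valid: you split $L$ over the components of $\csupp M$, and you use $e_{\mathcal S}\Bbbk\P e_{\mathcal S}=\Bbbk\mathcal S$ to show that both resulting summands of $M$ are nonzero.
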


\begin{proof}
    Let $\P$ be a poset of type $\DD$ and suppose $M=(M_x,{_y}h_x)_{x,y\in\P}$ an indecomposable object in $\md_{sp}(\Bbbk\P)$. Proposition \ref{ind} implies that there is a sincere peak-subposet  $S=\csupp M$ and sincere socle-projective $\Bbbk S$-module $L$ such that $M$ is the image of $T_{S}(L)$ where $T_{S}$ is the functor described in Equation \ref{inducedfunctor}. Given that $S$ is a connected peak-subposet of $\P$, Lemma \ref{lemmasubpeakposet} implies $S$ is of type $\DD$ or $\mathbb {A}$.
\end{proof}
The list of the sincere socle-projective $\Bbbk S$-modules of type $\mathbb {A}$ are described in Lemma \ref{1},  whereas those of type $\DD$ are described  in  \cite{kleiner75}, \cite[Table 8.1]{justina}, \cite[Table 8.1]{justina1}, and
 \cite[Table 8.2]{justina2} associated with the sincere posets in  Appendix \ref{sincereposetsD}; they are reproduced, under the same names $\mathcal F_i^{(r)}$, in Appendix \ref{sincererepsD} (see also Remark \ref{sincereDmodules}).\\

Given a poset of type $\DD$, $Q^{F}$ as in Definition \ref{defposetypeD} and $M=(M_x,{_y}h_x)_{x,y\in\P}$ an object in $\md_{sp}(\Bbbk \P)$.  Let $d_{M}=(d_{M}(x))_{x \in \P}$ denote the dimension vector of $M$, where $d_{M}(x)$ is the dimension of the vector space $M_{x}$ for $x$ in $\P$. In addition, the support of $M$ (denoted by $\supp M$) is given by $\{ x \in \P \vert M_{x} \neq 0 \}$.

\begin{corollary} \label{suppoftypeD}
    Let $\P$ be a poset of type $\DD$ and $Q^{F}$ the corresponding quiver as in Definition \ref{defposetypeD}, with the labeling showed in Figure \ref{numberingD}. If $M$ is an indecomposable object in $\md_{sp}(\Bbbk \P)$, then:

    \begin{enumerate}
        \item [(a)] $\Supp M$ is connected as a subset of the quiver $Q$.
        \item [(b)] If $d_{M}(x)=2$,  then $x$ is one of the vertices $ 2 \leq x \leq n-2$.
        \item  [(c)] The vertices $x$ with $d_{M}(x)=2$ form a subgraph of $Q$ of type $\mathbb {A}$ that contains the vertex $n-2$ in $Q$.
        \item [(d)] The vertices $x \neq n-1,n$ in $Q$ with $d_{M}(x)=1$ form a subgraph of $Q$ of type $\mathbb {A}$.
        \item [(e)] If $d_{M}(x)\neq 2$ for all $x \in \P$. The vertices $x$ with $d_{M}(x)=1$ form a subgraph of $Q$ of type $\mathbb {A}$ or $\DD$.
    \end{enumerate}
\end{corollary}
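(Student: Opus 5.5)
The approach is to reduce everything to the sincere case via Theorem \ref{sincereposetAD} and Proposition \ref{ind}, and then inspect the finite lists of sincere representations. Let $M$ be indecomposable in $\md_{sp}(\Bbbk\P)$. By Proposition \ref{ind}, $M\cong T_{\mathcal S}(L)$ with $\mathcal S=\csupp M$ and $L$ a sincere socle-projective $\Bbbk\mathcal S$-module. By Theorem \ref{sincereposetAD}, $\mathcal S$ is a sincere poset of type $\AA$ or $\DD$. First I would describe $\supp M$ and $d_M$ in terms of $L$. Since $T_{\mathcal S}(L)=L\otimes_{\Bbbk\mathcal S}e_{\mathcal S}\Bbbk\P e_{\mathcal S\cup(\P\setminus\max\P)}$, at a non-maximal point $x\in\P$ the space $M_x$ is the image of $\bigoplus_{s\in\mathcal S,\, s\preceq x}L_s$ inside $\bigoplus_{z\in\max\mathcal S,\, z\succeq x}L_z$. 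For $x$ maximal not in $\mathcal S$, we have $M_x=0$. Thus $\supp M$ consists of $\mathcal S$ together with the non-maximal points $x$ lying above some element of $\mathcal S$ and below some peak of $\mathcal S$, and $d_M(x)$ is determined by the ranks of the structure maps of $L$.

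Next, I split into two cases.

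\textbf{Case 1: $\mathcal S$ is of type $\AA$.} By Lemma \ref{1}(b), $L$ is the module with all $L_s=\Bbbk$ and all maps the identity. Hence every $M_x$ has dimension at most $1$, so (b) and (c) hold vacuously. For (a), (d) and (e), I use that $\mathcal S$ is one of $\mathcal S^{(r)}_1,\mathcal S^{(r)}_2,\mathcal S^{(r)}_3$ and that the filled-in points lie on paths of $Q^F$ between points of $\mathcal S$. Because alien arrows only join vertices in the support of a single $I(z)$ (condition (a) of Definition \ref{aliensetDef}, applied to each $\AA$-subquiver), every such path can be replaced by a path in $Q$ through the same $z$-subquiver. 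This shows $\supp M$ is a connected subquiver of $Q$. A connected subgraph of the $\DD_n$ tree is of type $\AA$ or $\DD$, which gives (e). Removing the leaves $n-1,n$ leaves an $\AA$-type subgraph, which gives (d).

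\textbf{Case 2: $\mathcal S$ is of type $\DD$.} Here $\mathcal S$ is one of the sincere posets in Appendix \ref{sincereposetsD}, and $L$ is one of the $\mathcal F_i^{(r)}$ of Appendix \ref{sincererepsD}. I would go through this list, using the structure of Figure \ref{fig-generalquivers} and Remark \ref{rem-generalquivers} to place the points of $\mathcal S$ in $Q$: the special points $x,y$ are the leaves, and $w$, or the branch point, is $n-2$. Two facts then have to be checked on the list.
\begin{itemize}
\item[(i)] The coordinates of dimension $2$ occur only at points that are not extreme in $Q$, namely at the branch point and along the arm toward vertex $1$. These points form an interval containing $n-2$, which gives (b) and (c).
\item[(ii)] The dimension-$1$ points away from the leaves form an interval of the long arm, which gives (d).
\end{itemize}
Filling in the non-$\mathcal S$ points through the tensor formula preserves both facts, because a filled-in point $x$ receives the dimension of the image of the map from the points below it. That dimension is monotone along paths of $Q$ in the same direction as the arm, so the filled-in points interpolate between neighbouring dimensions and create no new components. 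Connectivity (a) follows as in Case 1.

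The main obstacle is the case analysis in Case 2. It requires matching each sincere type-$\DD$ poset of Appendix \ref{sincereposetsD}, together with its sincere representation, to its embedding in an admissible quiver $Q^F$, and verifying that the $2$-dimensional coordinates sit on the branch-containing segment. I would organize this by the three shapes (a), (b), (c) of Figure \ref{fig-generalquivers}. Remark \ref{lem-avoid-Di} restricts which sincere posets can occur in each shape, which should cut the list down considerably. For the remaining posets, I would read off the dimension vectors directly from the tables.
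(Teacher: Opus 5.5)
Your reduction is the paper's own: pass to a sincere $L$ on $\mathcal S=\csupp M$ via Proposition \ref{ind} and Theorem \ref{sincereposetAD}, then read the tables of Appendix \ref{sincererepsD}. For the points of $\mathcal S$ itself this works (Remark \ref{sincereDmodules}). The gap is at the points of $\supp M\setminus\mathcal S$, which is where the content lies.

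In Case 1 you replace an alien arrow $a\to b$ inside $\Supp I(z)$ by the $Q$-path $a\to\cdots\to z\leftarrow\cdots\leftarrow b$. That path stays inside $\supp M$ only if some peak of $\mathcal S$ lies above every vertex on it. By your own description of $T_{\mathcal S}$, $M_z=0$ whenever $z\notin\mathcal S$, and this really happens:
\begin{itemize}
\item Take $n=7$, $Q\colon 1\to2\to3\to4\leftarrow5\leftarrow6$, $5\to7$. It is admissible, with $D_2$ on $\{6,5,4,7\}$.
\item Take $F=\{2\to6\}$. On the $\AA$-subquiver with vertices $1,\dots,6$ this is an alien set: $2,6\in\Supp I(4)$, $6$ is an extreme source, $2\to 6$ is the only path from $2$ to $6$, and $Q^F$ is acyclic. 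On the other $\AA$-subquiver it restricts to $\varnothing$. So $\P=\P_{Q^F}$ is of type $\DD$ by Definition \ref{defposetypeD}.
\item Let $M_2=M_6=M_5=M_7=\Bbbk$ with identities along the chain $2\prec6\prec5\prec7$, and $M=0$ elsewhere. Here $M_3=0$ is forced, since $4$ is the only peak above $3$.
\item Then $M$ is indecomposable and socle-projective with $\csupp M=\{2,7\}$. But $\supp M=\{2,5,6,7\}$ is disconnected in $Q$, so (a) fails, and so do (d) and (e).
\end{itemize}
So (a), (d), (e) need an extra hypothesis excluding this. One option is to require that $\P\setminus\{x\}$ and $\P\setminus\{y\}$ be of type $\AA$ as subposets; Remark \ref{rem-generalquivers} and Proposition \ref{descriptionD} tacitly assume this, but it fails here, since $\{2,5,4,7\}\cong\mathcal R_2$ is a peak-subposet of $\P\setminus\{6\}$. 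The paper's argument for (a) has the same hole: it asserts $M_v\neq0$ whenever $v$ lies above a minimal point of $\csupp M$, yet here $2\prec3$ and $M_3=0$.

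In Case 2, the claim that ``the filled-in points interpolate'' restates (c) and (d) rather than proving them. For $v\notin\mathcal S$, $d_M(v)$ is the rank of $\bigoplus_{s\preceq v}L_s\to\bigoplus_{z\succeq v}L_z$. That rank depends on the peaks above $v$ as much as on the points below it. It does drop along directed paths, for instance at a vertex above a point with $L_s=\Bbbk^2$ and below a single one-dimensional peak.

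What you need are the two local facts the paper uses:
\begin{itemize}
\item $I_v=0$ bounds $d_M(v)$ by the dimensions of the peaks above $v$.
\item If $s\preceq v\preceq z$ with ${_z}h_s$ an isomorphism of $\Bbbk^2$, then $d_M(v)=2$, because a rank-two map cannot factor through $\Bbbk$ (identity (\ref{EMa})).
\end{itemize}
You must also treat the case $n-2\notin\mathcal S$, where your claim that the branch point is $n-2$ fails. For example, take $Q\colon 1\to2\to3\to4$, $3\to5$ and $\mathcal S=\{1,2,4,5\}$, with $L$ the sincere module of dimension vector $(1,2,1,1)$. The branch vertex of $\mathcal S$ is $2$, and $M_3=\Bbbk^2$ holds only because $L_2=\Bbbk^2$ maps isomorphically onto $L_4\oplus L_5$; this has to be checked, not assumed.
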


\begin{proof}
   Let $\P$ be a poset of type $\DD$ whose vertices are labeled as in Figure \ref{numberingD}, and let $Q^{F}$ be a quiver associated with  $\P$ as in Definition \ref{defposetypeD}.  Suppose $M=(M_x,{_y}h_x)_{x,y\in\P}$  an indecomposable module in $\md_{sp}(\Bbbk\P)$. By Lemma \ref{sincereposetAD}, $S=\csupp M$ is a sincere poset $S$ of type $\DD$ or $\mathbb {A}$. Note that, if $w$ is a minimal  and extreme vertex  of $S$, with $S$ of type $\AA$, then $w$ is not necessary a minimal vertex of $\P$. Similarly, if $S$ is of type $\DD$ and $w$ is a minimal and extreme vertex of $S$ different to  vertex $x$ or $y$ in Figure \ref{fig-generalquivers}, then $w$ is not necessary a minimal vertex of $\P$. For the other vertices in $S$ of type $\AA$ or $\DD$, if $w$ is a minimal  (respectively maximal) vertex in $S$, then $w$ is a minimal (respectively maximal) vertex in $\P$. Moreover, Proposition \ref{ind} implies that there exists a sincere representation  $L=(L_x,{_y}h'_x)_{x,y \in S}$ such that  $L$ is a restriction of $M$ over $S$, that is, $L_{x}=M_{x}$ and ${_y}h'_x={_y}h_x$ for $x,y \in S$. In particular,  if $M_{x}$ is generated by a family of paths in the incidence algebra $\Bbbk \P$, then $L_{x}$ is generated by the same family of paths. Since $\P$ is a poset of type $\DD$,  if ${_y}h'_x \neq 0$, then there is a path $(x\parallel \alpha_{1}, \dots, \alpha_{q}\parallel y)$  in $Q^{F}$ for some $ q \in \mathbb{Z}_{>0}$. Consider the Functor $T_{S}$ (see Equation \ref{inducedfunctor}), we have $$ T_{S}(L)=L\otimes_{\Bbbk\mathcal{S}}(e_{\mathcal{S}}\Bbbk\P e_{\P})=L\otimes_{\Bbbk\mathcal{S}} \bigoplus _{x \in  S}P(x)$$ where $P(x)=(P(x)_{j}, \varphi_{\alpha})$ is an indecomposable projective module in $\md (\Bbbk \P)$  with $j$ a vertex of $\P$, and $\alpha$ an arrow from $i$ to $j$ in incidence quiver of $ \Bbbk \P$. Recall that $P(x)_{j}$ is generated by the paths from $x$ to $j$ in the incident algebra $\Bbbk \P$. Now, let $v$ be a vertex  of $\P$ such that $x \prec v$ in $\P$, where $x$ is a minimal vertex of $S$. Since $T_{S}(L)=M$,  the vector space $M_{v}$ contains the nonzero element  $(x_{a}c)=\varphi_{c}(x_{a}) \in M_{v}$ of $M \in \md (\Bbbk \P)$  where $x_{a} \in M_{x}$ and $c=(x \parallel \beta_{1}, \dots, b_{t} \parallel v)$ is a nontrivial path with $\varphi_{c}= \varphi_{\beta_{t}} \circ \dots \circ \varphi_{\beta_{1}}: M_{x} 
   \longrightarrow M_{v}$. Therefore, $M_{v} \neq 0$, which proves part (a).\\
   
  According to the sincere socle-projective modules in Appendix \ref{sincererepsD}, when $x \prec z$  with  $x$ a minimal element and $z$ a maximal element of $S$, if $d_{S}(x)=d_{S}(z)$,  ${_z}h'_{x}=1$,and $\ker {_z}h'_x=0$. If  $d_{S}(x)<d_{S}(z)$, then $${_z}h'_{x} \in  \big \{ \left[\begin{smallmatrix} 0 \\1
    \end{smallmatrix}\right],  \left[\begin{smallmatrix} 1 \\0
    \end{smallmatrix}\right],  \left[\begin{smallmatrix} 1 \\1
    \end{smallmatrix}\right] \big\}$$ and again $\ker {_z}h'_x=0$. If $d_{S}(x)>d_{S}(z)$, then $${_z}h'_{x} \in \big \{ \left[\begin{smallmatrix} 1 &0 \end{smallmatrix}\right], \left[\begin{smallmatrix} 0 &1 \end{smallmatrix}\right], \left[\begin{smallmatrix} 1 &1 \end{smallmatrix}\right]  \big \}.$$ In this case, $ \ker {_z}h'_x$ is, respectively, $$ \left \langle \left[\begin{smallmatrix} 0 \\1
    \end{smallmatrix}\right] \right \rangle, \left \langle \left[\begin{smallmatrix} 1 \\0
    \end{smallmatrix}\right] \right \rangle, \left \langle \left[\begin{smallmatrix} 1 \\-1
    \end{smallmatrix}\right] \right \rangle.$$

Let $v$  be a vertex in $\P$ that is neither minimal nor maximal in $S$. Suppose that $x \prec v$  in $\P$, where $x \in S$ is a  minimal vertex of $S$, and   $v$ has  $x$ as its unique minimal vertex in $S$. If there exists a unique maximal vertex $z$ in $\P$ such that $v \prec z$, and if $d_{M}(z)=1$, and $d_{M}(v)=2$, then  $I_{v}\neq 0$, which is a contradiction. Suppose that  $d_{M}(x)=d_{M}(z)=2$; then ${_z}h_x=1$, and if $d_{M}(v)=1$ we have \begin{equation} \label{EMa}
\begin{bmatrix} 1  &0 \\0 &1 \end{bmatrix}= {_z}h_x={_z}h_{v}\cdot {_v}h_x= \begin{bmatrix} a \\b
    \end{bmatrix} \begin{bmatrix} c &d
    \end{bmatrix}= \begin{bmatrix} ac  &ad \\bc &bd \end{bmatrix},
    \end{equation}
 Since $ac=1$, $a,c\neq0$. Since $ad=0$, $d=0$; similarly, from $bc=0$ and $c\neq0$ it follows that $b=0$, and hence $bd=0$, contradicting the $(2,2)$ entry of the identity, which requires $bd=1$, which is a contradiction. Therefore, $d_{M}(v)=2$.\\

 If   $z,z' \in \P$ are the only maximal vertices satisfying the relation $ z \succ v \prec z'$, then $\P$ has an admissible quiver $\DD$ as in case (a) of Figure \ref{fig-generalquivers}, where $z$ and $z'$ correspond to the labeled vertices $x,y$ there. Then $d_{M}(z')=d_{M}(z)=1$, if $d_{M}(x)=2$ and $d_{M}(v)=1$, we have ${_z'}h_{x}={_z}h_{x}$, which is a contradiction; hence $d_{M}(v)=2$. If $v$ is smaller than three maximal vertices of $\P$, this would imply $v=x$ (see Figure \ref{fig-generalquivers}), which is a contradiction.\\

 Suppose that there exist $x,x' \in S$ such that $ x \prec v \succ x'$, where $x$ and $x'$ are the only minimal vertices smaller than $v$ in $\P$. Note that $v$ can be smaller than only one maximal vertex in $\P$, since otherwise it would contain $\P_{6}$ (see Lemma \ref{lemmaP_1-p_7}).
 Suppose that $d_{M}(x)=1$ and $d_{M}(x')=2$. If $d_{M}(z)=1$, then since the vertices are labeled from $1$ to $n$ as in Figure \ref{numberingD}, with $x'$ labeled $j$, either $v \in \{x',x,z \}$,  which is a contradiction, or every vertex labeled $i<j$ with nonzero dimension has dimension $1$ (see Remark \ref{sincereDmodules}). Note that the case $d_{M}(x)=2$ and $d_{M}(v)=1$ cannot occur (see the argument above); therefore $d_{M}(v)=2$. Now, if $d_{M}(x)=d_{M}(x')=2$ and $d_{M}(z)=1$, then $L$ would not be a sincere representation; hence $d_{M}(z)=2$, and therefore, using the same arguments as above, $d_{M}(v)=2$ (see Equation \ref{EMa}).\\

 If $v$ is greater than three minimal points $x, x', x''$ in $\P$, note that $v$ can be smaller than only one maximal point $z$ in $\P$; otherwise $\P$ would not be a poset of type $\DD$ (see Lemma \ref{lemmaP_1-p_7}). This implies that $\P$ has as admissible quiver the one in Figure \ref{fig-generalquivers}, case (b), and moreover two of these minimal vertices are the labeled vertices in that admissible quiver. Without loss of generality, suppose that these two vertices are $x$ and $x'$; according to the list of sincere representations, $d_{M}(x)=d_{M}(x')=1$. If $d_{M}(x'')=2$ and $d_{M}(z)=1$, this would imply that $L$ is not a sincere representation (see Remark \ref{sincereDmodules}); that is, $d_{M}(z)=2$. If $d_{M}(v)=1$, then using Equation \ref{EMa}, we obtain a contradiction.  Hence, $d_{M}(v)=2$. If the dimensions of the maximal and minimal vertices are $0$, they can be described by the previous situations. Note that in all the sincere representations the extreme vertices do not have dimension $2$; therefore (b) holds.\\

Recall that (b) holds, that is, if $d_{M}(x)=2$, then $2 \leq x \leq n-2$; as a consequence of the previous construction, the vertices $x$ satisfying (b) form a connected subgraph of the vertices of $\P$. Suppose that there exists an indecomposable socle-projective module $M$ of $\P$ such that $d_{M}(y)=2$, but $y \neq n-2$. Using the sincere representations of Appendix \ref{sincererepsD} that satisfy the above, $d_{M}(n-1)=d_{M}(n)=1$, that is, $d_{M}(n-2)=1$, so there exists a vertex $1 < y <n-2 $ such that $d_{M}(y)=2$ and $d_{M}(y+1)=1$. If $y \prec n-2$ in $\P$, there must exist a minimal vertex $y'$ in $S$ such that   $y' \preceq  y $. If  $n-2 $ is not maximal in $S$, then it could happen that $n-1$ and $n-2$ are maximal in $S$ and $_{n-1}h_{y'}=_{n}h_{y'}$, which is a contradiction (see the sincere indecomposable modules in Appendix \ref{sincererepsD}). Otherwise, either $n-1$ or $n$ is maximal, and in this case the vertices $\{y',n-2, n-1,n\}$ form an admissible subposet $D_{2}$ (see Table  \ref{admissiblequivers}), but there is no sincere indecomposable module of $S$ satisfying these dimensions for the subposet $D_{2}$ (see Remark \ref{sincereDmodules}). Now, if $n-2$ is maximal in $\P$, then it is maximal in $S$, but this is a contradiction since there is no indecomposable module satisfying this configuration. If $n-2 \prec y$ in $\P$, then there must exist a maximal vertex $z'$ in $\P$, which is maximal in $S$, such that $y \preceq z'$. If $n-2$ is minimal, the vertices $ \{z', n-2,n-1, n \}$ form a subposet of type $D_{3}$, and again there is no sincere indecomposable module satisfying this condition. If $n-2$ is not minimal, either $n-1$ and $n$ are both minimal in $\P$, or one of them is minimal in $\P$. If both are minimal, again $_{z'}h_{n-1}=_{z'}h_{n}$, which is a contradiction. If only one of them is minimal, then the vertices $\{z', n-2, n-1, n \}$ form a subposet of type $D_{2}$, and again there is no indecomposable module satisfying this configuration of dimensions. Therefore (c) holds.\\

Following the possible orders for a vertex $v \in \P$ and the ideas of the proof of (b), conditions (d) and (e) are satisfied.
    \end{proof}

\section{Categorical Equivalence} \label{equi}

In this section, we prove a categorical equivalence between the category of finitely generated socle-projective modules over a poset of type $\DD$ and a full subcategory of the arc category $(C/T)$, where $T$ is a triangulation of the punctured polygon corresponding to a Dynkin quiver of type $\DD$.\\

Let $T$ be a triangulation of a punctured n-gon  and  $Q_T$ the corresponding Dynkin quiver  of type $\DD_n$.  Following  \cite{BMR,Schiffler}, we have the equivalence of categories \begin{equation} \label{equivalence1}
\varphi_T:\mathcal{C}/T \to \textup{rep }Q_T.  
\end{equation}
Labeling the edges in $T$ by $\tau_1,\tau_2,\ldots,\tau_n$, we get the dimension
vector $\underline{\dim}\,\varphi_T(M_{a,b}^\ze)$ of a representation
$\varphi_T(M_{a,b}^\ze)$ by 
\begin{equation} \label{eins}
  (\underline{\dim}\,\varphi_T(M_{a,b}^\ze))_i =
\dim\Hom_{\mathcal{C}}(\tau^{-1}\,\tau_i, M_{a,b}^\ze) =
\dim\Ext^1_{\mathcal{C}}(M_{a,b}^\ze,\tau_i) = e(M_ {a,b}^\ze,\tau_i),
\end{equation}
where $e(-,-)$ is the crossing number defined in Section \ref{section2}.

\begin{remark}\label{applications} Following \cite{schiffler10}, we can determine the representation $\varphi_T(M_{a,b}^\ze)$  because the isoclasses of indecomposable representations of quivers $\DD_n$ are determined by their dimension vectors $\mathbf{d}=(d_1,\dots,d_n)$ as follows. We label the vertices of the Dynkin diagram as shown in Figure~\ref{numberingD}.
 The entries $d_i$ of the dimension vector are either 0,1 or 2, and if we have $d_i=2$, then (1) $i$ is one of the vertices $2,3,\dots,n-2$, (2) for all vertices $j$ with $i\leq j\leq n-2$ we have $d_j=2$, and (3) $d_{i-1}\geq 1$ and $d_{n-1}=d_n=1$.  Thus, the vertices $i$ with $d_i=2$ form a subgraph of type $\mathbb{A}$ that contains the vertex $n-2$. The vertices $i\neq n-1,n$ with $d_i=1$ also form a subgraph of type $\mathbb{A}$, and if $d_j\neq 2$ for all $j$, then all the vertices $i$ with $d_i=1$ form a subgraph of type $\mathbb{A}$ or a subgraph of type $\DD$.  The corresponding representation is $M=(M_i,\varphi_\alpha)$ with $M_i=\Bbbk^{d_i}$; and $\varphi_\alpha=1$ if $d_{s(\alpha)}=d_{t(\alpha)}$, $\varphi_\alpha=0$ if one of  $d_{s(\alpha)},d_{t(\alpha)}$ is zero. If one of the  $d_i$ is 2, then there are exactly three arrows that connect a vertex with dimension 1 to a vertex with dimension 2: two of these arrows, let us call them $\beta_1,\beta_2,$ connect the vertex $n-2$ with the vertices $n-1$ and $n$, the vector space of dimension two being at $n-2$, while the third arrow $\alpha_i$ connects two vertices $i$ and $i+1$, the vector space of dimension two being at vertex $i+1$. Consider the one-dimensional subspace  of $M_{i+1}$ given by 

\[
     U_1=\begin{cases}
       \text{Im }\varphi_{\alpha_i}&\quad\text{if }\alpha_i\text{ points to }i+1,\\ \text{ker }\varphi_{\alpha_i} &\quad\text{otherwise.} 
     \end{cases}
\]
It is also considered a subspace  of  $M_{n-2}$. Consider also the following two one-dimensional subspaces $U_2$ and $U_3$ of $M_{n-2}$:
\[
  U_2=\begin{cases}
       \text{Im }\varphi_{\beta_1}&\quad\text{if }\beta_1\text{ points to }n-2,\\ \text{ker }\varphi_{\beta_1} &\quad\text{otherwise;} 
     \end{cases}
\]
and 
\[
  U_3=\begin{cases}
       \text{Im }\varphi_{\beta_2}&\quad\text{if }\beta_2\text{ points to }n-2,\\ \text{ker }\varphi_{\beta_2} &\quad\text{otherwise.} 
     \end{cases}
\]
Then the condition on the three maps $\varphi_{\alpha_i}, \varphi_{\beta_1}$ and $\varphi_{\beta_2}$ is that these three one-dimensional subspaces are pairwise distinct. \end{remark}

We will label the arcs in a triangulation $T$ of a punctured n-gon by  $t_1,t_2,\dots, t_n$ according to Figure \ref{numberingD}.\\ 

Let $\P$ be a poset of type $\mathbb{D}$ associated with the quiver $Q^{F}$ as in Definition \ref{defposetypeD} where $T$ is the triangulation in a punctured $n$-gon induced by $Q$.

\begin{definition}\label{definitionfan} A \textit{fan} is a maximal subset $\Sigma$ of the triangulation $T$ of at least two arcs that satisfies one of the following conditions:
\begin{enumerate}
    \item [(a)] all of the arcs in $\Sigma$ share exactly one vertex of the polygon,  or
    \item [(b)] the arcs form a  triangle as in Figure \ref{trianglefan}.

\end{enumerate}

\begin{figure}[h!]
\begin{center}
\begin{tikzpicture}[y=.3cm, x=.3cm,font=\normalsize, scale=1.5]
\draw (0,0) circle (1cm);
\filldraw (0,0) circle (1pt);
\filldraw (-1.4,-3) circle (1pt);
\filldraw (1.4,-3) circle (1pt);
\node (tau3) at (-0.7,-1.6) {$_{\tau_{3}}$};
\node (tau1) at (0.7,-1.6) {$_{\tau_{1}}$};
\node (tau2) at (0.0,0.6) {$_{\tau_{2}}$};
\draw (-1.4,-3) .. controls (-1.25,-1.8) and (-1,0.3) .. (-0.3,0.6);
\draw (1.4,-3) .. controls (1.25,-1.8) and (1,0.3) .. (0.3,0.6);
\draw[-, >=latex,black](0,0) -- (-0.65, -1.4);
\draw[-, >=latex,black](-0.82, -1.79) -- (-1.4,-3);
\draw[-, >=latex,black](0,0) -- (0.65, -1.4);
\draw[-, >=latex,black](0.82, -1.79) -- (1.4,-3);

\end{tikzpicture}

\end{center}
   
    \caption{Triangle with a vertex at the puncture.}
    \label{trianglefan}
\end{figure}

\end{definition} 
\begin{remark}
A fan  $\Sigma$ is a  chain with respect to the order $\tau_i\leq \tau_j$ if the arc $\tau_j$ can be obtained from $\tau_i$ by rotating clockwise about their common vertex\textcolor{black}{; case (b) in Definition~\ref{definitionfan} corresponds} to the chain $\tau_1 \leq \tau_2 \leq \tau_3$. The peak-arc (denoted by $\tau_{\Sigma}$) is the maximum element of the chain $\Sigma$.
\end{remark}

\begin{example} \label{examplequiverD}
Let $Q$ be a quiver of type $\DD_{n}$ of the form

\begin{center}
\begin{tikzcd}[row sep= tiny, column sep = small]
&& && & \arrow[dl] 6  \\
&& &&\arrow[dl]5 \arrow[dr]& \\
&& &\arrow[dl]4& &7 \\
1\arrow[dr]&& \arrow[dl]3&&& \\
&2&&&&
\end{tikzcd}
\end{center}
then the triangulation $T$  associated with $Q$ is
\[\xy/r4.5pc/: {\xypolygon7"A"{~<{}~>{-}{\scriptstyle\bullet}}},
*+{\scriptstyle\bullet},
\POS"A7"\drop{\begin{array}{c}\\  \end{array}  }
 \POS"A3"\drop{\begin{array}{c}  \\ \\ \end{array}  } 
\POS"A4"\drop{\begin{array}{c}  \end{array}  }
\POS"A1"\drop{\begin{array}{c} \end{array}  }  
\POS"A2"\drop{\begin{array}{c}\\ \\  \end{array}  }  
\POS"A6"\drop{\begin{array}{c}\\ \\  \end{array}  }  
\POS"A5"\drop{\begin{array}{c}\qquad \end{array}  }  
\POS"A2" \ar@{-} @/^0ex/ |-{\tau_{7}}  "A0"
\POS"A3" \ar@{-} @/^0ex/ |-{\tau_{6}} "A0"
\POS"A6" \ar@{-} @/^2ex/|-{\tau_{1}} "A4"
\POS"A3" \ar@{-} @/_5ex/|-{\tau_{2}} "A6"
\POS"A3" \ar@{-} @/_8ex/|-{\tau_{3}} "A7",
\POS"A3" \ar@{-} @/_11ex/ |-{\tau_{4}} "A1",
\POS"A3" \ar@{-} @/_13ex/|-{\tau_{5}} "A2",
\endxy 
\]

Given that $\tau_{1}$ and $\tau_{2}$ share the exactly one vertex of the polygon, and $\tau_{2}$ is obtained from $\tau_{1}$ by rotating clockwise about their common vertex ($\tau_{1} \leq  \tau_{2}$), then $\Sigma_{1}=\{ \tau_{1},\tau_{2} \}$ is a fan in $T$. In the same way,  the arcs $\tau_{2}$, $\tau_{3}$, $\tau_{4}$, $\tau_{5}$, and $\tau_{6}$ satisfy the condition (a) in Definition \ref{definitionfan}, so, $\Sigma_{2}=\{ \tau_{2}, \tau_{3}, \tau_{4}, \tau_{5}, \tau_{6} \}$ is another fan and $\tau_{6} \leq  \tau_{5} \leq \tau_{4} \leq \tau_{3} \leq \tau_{2}$. Finally, $\tau_{5}$ is obtained from the arc  $\tau_{6}$ by rotating clockwise about their common vertex ($\tau_{6} \leq \tau_{5}$), $\tau_{7}$ and $\tau_{5}$ satisfy the previous condition ($\tau_{5} \leq \tau_{7}$), i.e., $\Sigma_{3}=\{ \tau_{5}, \tau_{6}, \tau_{7}\}$ is a fan (recall the condition (b) in Definition \ref{definitionfan}). The peak-arc of $\Sigma_{1}$ and $\Sigma_{2}$  is $\tau_{2}$, the peak-arc of $\Sigma_{3}$ is $\tau_{7}$ (see Figure \ref{examplesfans}). \textcolor{black}{Two fans may share several arcs; for example,}  $\Sigma_{2} \cap \Sigma_{3}$ is the set $\{ \tau_{5}, \tau_{6} \}$. 

\begin{figure}[h!]
\[\begin{array}{ccc}
\xy/r3.6pc/:{\xypolygon7"A"{~<{}~>{-}{\scriptstyle \bullet}}}
*+{\scriptstyle\bullet},
\POS"A6" \ar@{-} @/^2ex/|-{\tau_{1}} "A4"
\POS"A3" \ar@{-} @/_5ex/|-{\tau_{2}}@[blue] "A6"
\endxy
\quad 
&\quad
\xy/r3.6pc/:{\xypolygon7"A"{~<{}~>{-}{\scriptstyle \bullet}}}
*+{\scriptstyle\bullet},
\POS"A3" \ar@{-} @/_5ex/|-{\tau_{2}}@[blue] "A6"
\POS"A3" \ar@{-} @/_8ex/|-{\tau_{3}} "A7",
\POS"A3" \ar@{-} @/_11ex/ |-{\tau_{4}} "A1",
\POS"A3" \ar@{-} @/_13ex/|-{\tau_{5}} "A2",
\POS"A3" \ar@{-} @/^0ex/ |-{\tau_{6}} "A0"
\endxy 
\quad 
&\quad
\xy/r3.6pc/:{\xypolygon7"A"{~<{}~>{-}{\scriptstyle \bullet}}}
*+{\scriptstyle\bullet},
\POS"A2" \ar@{-} @/^0ex/ |-{\tau_{7}} @[blue] "A0"
\POS"A3" \ar@{-} @/^0ex/ |-{\tau_{6}} "A0"
\POS"A3" \ar@{-} @/_13ex/|-{\tau_{5}} "A2",
\endxy 
\end{array}\]
\caption{All the fans in $T$: $\Sigma_{1}$ (left polygon), $\Sigma_{2}$ (center polygon), and $\Sigma_{3}$ (right polygon).}\label{examplesfans}
\end{figure}
\end{example}

For an arc $\tau\in T$, we denote be  $\tau_{\vartriangle}$   the set of all  arcs $\tau'$ in  $T$ such that $\tau' \leq \tau$ with $\tau$ and $\tau'$ belong the same fan.

\begin{definition} \label{definitionarc}
An arc $\gamma$ not in $T$ is said to be \textit{$\star$-arc} if it satisfies the following conditions:
\begin{enumerate}
\item [(a)] For each arc $\tau$ in $T \setminus  (\tau_{n-2})_{\vartriangle}$, if $e(\gamma,\tau) \neq 0$, there exists a fan $\Sigma$ containing $\tau$  such that $e(\gamma, \tau) \leq e(\gamma, \tau_\Sigma)$.
\item [(b)]  For each arc $\tau$ in $(\tau_{n-2})_{\vartriangle}$,  if $e(\gamma,\tau) \neq 0$,  there exists a fan  $\Sigma$ containing $\tau$ such that $e(\gamma, \tau_\Sigma)\neq 0$ .
\end{enumerate}
\end{definition}

\begin{example} \label{examplequiverD1} Let $T$ be the triangulation in Example \ref{examplequiverD}. According to Definition \ref{definitionarc},   the arcs  $\gamma_{1}$ and $\gamma_{2}$ are not $\star$-arcs because $\gamma_{1}$ crosses $\tau_{3}$ twice but $\gamma_{1}$ crosses $\tau_{2}$ only once, and  $\gamma_{2}$ crosses $\tau_{4}$ but it does not cross $\tau_{2}$. On the other hand $\gamma_{3}$ is a $\star$-arc because it crosses $\tau_{2}$ and $\tau_{7}$. In this case, $ (\tau_{5})_{\vartriangle}=\{ \tau_{5}, \tau_{6}\}$ (see Figure \ref{examplestararcs}).
\end{example}

\begin{figure}[h!]
\[\begin{array}{ccc}
\xy/r3.6pc/:{\xypolygon7"A"{~<{}~>{-}{\scriptstyle \bullet}}}
*+{\scriptstyle\bullet},
\POS"A6" \ar@{-} @/_11ex/ |-{\gamma_{1}} "A4"
\endxy
\quad 
&\quad
\xy/r3.6pc/:{\xypolygon7"A"{~<{}~>{-}{\scriptstyle \bullet}}}
*+{\scriptstyle\bullet},
\POS"A7" \ar@{-}@/^13ex/ |-{\gamma_{2}} "A1",
\endxy 
\quad 
&\quad
\xy/r3.6pc/:{\xypolygon7"A"{~<{}~>{-}{\scriptstyle \bullet}}}
*+{\scriptstyle\bullet},
\POS"A5" \ar@{-}@/^8ex/ |-{\gamma_{3}} @[blue] "A1",
\endxy 
\end{array}\]
\caption{Arcs in a $7$-gon}\label{examplestararcs}
\end{figure}

\begin{definition} Given an alien arrow $\alpha\in F$ with $s(\alpha), t(\alpha) \in \supp I(z)$, we say that an arc $\gamma$ is frozen by $\alpha$ if $\gamma$ crosses $\tau_{s(\alpha)}$ and the number of the cross between $\gamma$ and $\tau_{t(\alpha)}$ is less than  the number of crossings  of $\gamma$ into $\tau_{z}$.
\end{definition}

\begin{example} \label{examplealienarrow}

Given an alien arrow $\alpha:6 \rightarrow 1$ in $Q$ of Example \ref{examplequiverD}. The  arcs frozen by $\alpha$ are the arcs $\gamma$ such that  if $\gamma$ crosses $\tau_{6}$, number of crossings 
between $\gamma$ and $\tau_{1}$ is least than number of crossings 
between $\gamma$ and $\tau_{2}$ (see Figure \ref{examplefrozenarc}).

\begin{figure}[h!]

\[\begin{array}{ccc}
\xy/r3.6pc/:{\xypolygon7"A"{~<{}~>{-}{\scriptstyle \bullet}}}
*+{\scriptstyle\bullet},
\POS"A4" \ar|-(0.5){\SelectTips{cm}{}\object@{+}}@{-}  @[blue]"A0"
\POS"A4" \ar@{-} @/^2ex/ @[blue]  "A2"
\POS"A1" \ar@{-} @/_5ex/ @[blue] "A4"
\POS"A7" \ar@{-} @/_8ex/ @[blue] "A4",
\POS"A6" \ar@{-} @/_11ex/  @[blue] "A4"
\POS"A4" \ar@{-}@/^13ex/ @[blue] "A5",
\endxy
\end{array}\]
\caption{Set of frozen arcs in  $Q_{T}$} \label{examplefrozenarc}
\end{figure}
\end{example}

\begin{definition} \label{sparc} An arc $\gamma\notin T$ is an sp-arc if it is a $\star$-arc and it is not frozen by an alien arrow in $F$.
\end{definition}

The following proposition describes a connection between socle-projective modules and $\star$-arcs.

\begin{proposition}\label{propstar} Let $T$ be  a  triangulation  of a punctured n-gon such that $Q_T$ is an admissible Dynkin quiver of type $\mathbb{D}_n$. A arc $\gamma\notin T$  is  a $\star$-arc if and only if $\varphi_T(\gamma)$ is a   socle-projective representation in $\ind Q_T$.
\end{proposition}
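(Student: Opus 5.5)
We will translate both sides of the equivalence into conditions on the dimension vector $\mathbf d=\underline{\dim}\,\varphi_T(\gamma)$ and compare them. By (\ref{eins}), $d_i=e(\gamma,\tau_i)$, so every crossing condition in Definition \ref{definitionarc} is a condition on the entries of $\mathbf d$. On the module side, Proposition \ref{conditions,a,b}(b) (with $F=\emptyset$, so $\P=\P_{Q_T}$ and the maximal points are the sinks of $Q_T$) says that $\varphi_T(\gamma)=(M_i,\varphi_\alpha)$ is socle-projective if and only if, for every non-sink $x$, the intersection of the kernels of the composite maps $M_x\to M_z$ over the sinks $z\succ x$ is zero. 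Since $\varphi_T(\gamma)$ is explicitly described in Remark \ref{applications}, this can be checked on $\mathbf d$ together with the position of the distinguished lines $U_1,U_2,U_3$.

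The first step is to match fans with the relevant parts of $Q_T$. We check that a fan $\Sigma$ of type (a) in Definition \ref{definitionfan} corresponds to a maximal directed path in $Q_T$ ending at a sink, and that the peak-arc $\tau_\Sigma$ corresponds to that sink. This holds because consecutive arcs of the fan are joined by arrows oriented by clockwise rotation. A fan of type (b) corresponds to the configuration $\tau_{n-2}$, $\tau_{n-1}$, $\tau_n$ near the puncture, that is, the three-arrow piece around the branch vertex. Thus $\tau\le\tau_\Sigma$ means there is a path $\tau\to\tau_\Sigma$ in $Q_T$. We also identify $(\tau_{n-2})_\vartriangle$ with the vertices from which the branch vertex can be reached inside a fan. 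These are exactly the vertices where, by Remark \ref{applications}, the dimension can equal $2$.

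The proof then splits into two parts. Away from $(\tau_{n-2})_\vartriangle$, every $d_i\le1$ and every map is the identity or zero. Socle-projectivity at a vertex $i$ then means that $d_i\neq0$ forces $d_z\neq0$ for some sink $z$ reachable from $i$ with the map nonzero. In the fan language this is $e(\gamma,\tau)\le e(\gamma,\tau_\Sigma)$, which gives condition (a). Near the branch vertex, entries of $\mathbf d$ can equal $2$, and the map $M_x\to M_z$ from a $2$-dimensional space is injective exactly when its target is also $2$-dimensional. When the target is $1$-dimensional, we will use that the kernels through two different sinks are the distinct lines $U_i$, so their intersection is zero. This is why condition (b) only requires $e(\gamma,\tau_\Sigma)\neq0$ rather than an inequality, and we verify the equivalence there case by case over the three admissible shapes $D_1$, $D_2$, $D_3$ of Table \ref{admissiblequivers} and Figure \ref{fig-generalquivers}. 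For both directions we use that indecomposables are determined by $\mathbf d$ (Remark \ref{applications}). Hence the equivalence reduces to checking that the set of $\mathbf d$ realized by arcs satisfying (a) and (b) coincides with the set of $\mathbf d$ satisfying the kernel condition.

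We expect the main obstacle to be the case near the puncture: tagged arcs, arcs crossing $\tau_{n-2}$ twice, and the fan of type (b), where one must confirm that the lines $U_1,U_2,U_3$ are placed so that two different sinks really do separate vectors. We would first handle this by computing the crossing numbers explicitly for arcs $M_{a,a}^{\pm1}$ and arcs $M_{a,b}$ winding around the puncture, and then comparing the results with the image and kernel descriptions in Remark \ref{applications}.
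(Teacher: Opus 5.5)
Your global reduction is sound. By (\ref{eins}) and Remark~\ref{applications}, both the $\star$-condition and socle-projectivity depend only on $\mathbf d$. Since $Q_T$ is a tree, the kernel criterion of Proposition~\ref{conditions,a,b}(b) with $F=\emptyset$ is equivalent to the criterion the paper uses, namely that every simple subrepresentation $S(x)$ of $\varphi_T(\gamma)$ sits at a sink. The gap is in your case split. You identify $(\tau_{n-2})_{\vartriangle}$ (the vertices with a directed path to $n-2$ inside a fan) with the vertices where a $2$ can occur. From this you conclude that away from $(\tau_{n-2})_{\vartriangle}$ every $d_i\le 1$ and every map is an identity or zero. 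This is false: by Remark~\ref{applications}, $d_i=2$ can occur at every $i\in\{2,\dots,n-2\}$.

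A concrete counterexample uses the labeling of Figure~\ref{numberingD} with $n=7$. Take the admissible quiver $1\leftarrow 2\leftarrow 3\leftarrow 4\rightarrow 5$, $6\rightarrow 5$, $7\rightarrow 5$ (shape $D_1$ at the sink $5$) and $\mathbf d=(1,2,2,2,2,1,1)$. None of $1,2,3$ has a directed path to $5$, so $\tau_1,\tau_2,\tau_3\notin(\tau_5)_{\vartriangle}$. Yet $d_2=d_3=2$, and the map on the arrow $2\to 1$ is $\Bbbk^2\to\Bbbk$. The paper's $\gamma_1$ in Example~\ref{examplequiverD1} shows the same phenomenon.

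This is not cosmetic. You translate condition (a) as ``$d_i\neq 0$ forces some reachable sink with nonzero map'', which drops exactly the content of the inequality $e(\gamma,\tau)\le e(\gamma,\tau_\Sigma)$. In the example every vertex passes your versions of (a) and (b):
\begin{itemize}
\item the fan through $\tau_2,\tau_3$ has peak $\tau_1$ with $e(\gamma,\tau_1)=1\neq 0$;
\item $\tau_4,\tau_6,\tau_7$ reach $\tau_5$ with $e(\gamma,\tau_5)=2$.
\end{itemize}
But the representation is not socle-projective. The only sink reachable from $2$ and $3$ is $1$, so $I_2$ and $I_3$ are the nonzero line $U_1$.

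The missing case is a vertex $x$ with $d_x=2$ and no directed path to $n-2$. For any path from such an $x$ to a sink $z$, the composite $M_x\to M_z$ is the identity, has kernel exactly $U_1$, or is zero, according as $d_z=2,1,0$. An arrow from $x$ towards the branch vertex always leads to a sink inside the contiguous dimension-$2$ segment. Hence the kernel condition at $x$ holds exactly when some fan through $\tau_x$ has $e(\gamma,\tau_\Sigma)=2$, which is condition (a) used at the value $2$.

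Neither direction of your plan covers this case. From socle-projectivity you extract only $d_z\neq 0$, and from the $\star$-condition you never use the inequality at value $2$. The paper handles exactly this case in two places:
\begin{itemize}
\item in case (iii) of the forward direction, where $2\le x\le n-3$ forces $e(\gamma,\tau_\Sigma)=2$;
\item in the converse, by choosing the maximal $\tau_x\in\Sigma$ with $e(\gamma,\tau_x)>e(\gamma,\tau_\Sigma)$ and an injective $f_x$ with $\operatorname{Im} f_x=\ker\varphi_{\alpha_x}$.
\end{itemize}
Your use of the distinct lines $U_1,U_2,U_3$ for condition (b) is correct and matches the paper's $x=n-2$ case. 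The repair is to split by $d_x$ and by whether $x$ has a directed path to $n-2$, not by proximity to the puncture. The explicit crossing computations near the puncture that you anticipate are unnecessary, since everything is already expressed through $\mathbf d$.
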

\begin{proof}
Let  $Q_T$  be an admissible Dynkin quiver of type $\DD_n$ associated with a triangulation $T$ of a punctured $n$-gon. First, we suppose that $\gamma$ is a $\star$-arc. Let $x$ be a vertex in $Q_T$ such that the indecomposable simple representation $S(x)$ at vertex $x$ is a subrepresentation of $\varphi_T(\gamma)=M_{\gamma}=(M_i,\varphi_{\alpha})$ with $M_i=\Bbbk^{e(\gamma, \tau_i)}$ and $\varphi_{\alpha}$ is as above. We shall prove that $S(x)$ is a projective representation of $Q_T$. Since there exists an injective morphism $f=(f_{x})_{x \in Q_T}$ from $S(x)$ to $M_{\gamma}$, then $M_{x} \neq 0$, i.e., $e(\gamma, \tau_{x})= 1 \text{ or } 2$. We take a fan $\Sigma$ associated with a maximal point $z$ in $\P$ that contains $\tau_{x}$. If $\tau_x=\tau_{\Sigma}$ is the peak-arc of $\Sigma$, we have that $S(x)$ is projective because $x$ would be  a sink-vertex in $Q_T$. Now suppose $\tau_x \neq \tau_\Sigma $. Since $\gamma$ is a $\star$-arc and $\tau_{\Sigma}$ is the peak-arc of the fan containing $\tau_{x}$ we have that $\gamma$ crosses $\tau_{\Sigma}$. Thus, there exists  a path $\rho$ in $Q_{T}$ from $x$ to the vertex $z_{\Sigma}$ associated to $\tau_{\Sigma}$ such that for every vertex $y$ in $\rho$ the arcs $\gamma$ and $\tau_{y}$ cross. Given an arrow $\alpha:x  \rightarrow x'$ in $\rho$, the morphism $f:S(x)\rightarrow M_{\gamma}$ induces a commutative diagram 
\[ \xymatrix{ S(x)_{x} \ar[r]^{0} \ar[d]_{f_{x}} & 0 \ar[d]^{0}\\
\Bbbk^{e(\gamma,\tau_{x})} \ar[r]^{ \varphi_{\alpha}}&   \Bbbk^{e(\gamma,\tau_{x'})}} \] with $e(\gamma,\tau_{x}), e(\gamma,\tau_{x'})>0$. We have the following five possibilities for $e(\gamma,\tau_{x})$ and $e(\gamma, \tau_{x'})$:\\

(i) If $e(\gamma,\tau_{x})=e(\gamma, \tau_{x'})$ then $\varphi_{\alpha}$ is the identity. Thus, the commutativity of the diagram implies that $f_x=0$, a contradiction.\\

(ii) If $e(\gamma,\tau_{x})=1$ and $e(\gamma, \tau_{x'})=2$, then   $\varphi_{\alpha}=\left(\begin{smallmatrix} a\\b\end{smallmatrix}\right) \neq 0$  and $f_x=c$, with $a,b,c\in \Bbbk$. Hence, the commutativity of the diagram implies that $c=0$, a contradiction.\\

(iii) We suppose that $e(\gamma,\tau_{x})=2$ and $e(\gamma,\tau_{x'})=1$. Because  the configurations of dimensions of  representations in $\ind Q_T$, we have that $x$ cannot be $1,n-1$ or $n$ (see Figure \ref{numberingD}).  If $2\leq x\leq n-3$  , since $\gamma$ is a $\star$-arc, Definition \ref{definitionarc} implies that $e(\gamma, \tau_{\Sigma})=2$, but this contradicts the configurations of  dimensions of  representations in $\ind Q_T$,  because it would mean $e(\gamma,\tau_{x})$=2, $e(\gamma, \tau_{x'})=1$, $e(\gamma, \tau_{\Sigma})=2$  and $\rho: x \rightarrow x' \rightarrow \cdots \rightarrow z$.  Suppose now $x=n-2$. Then $x$ is incident to three arrows in $Q_{T}$ connecting $x$ to the vertices $n-3$, $n-1$ and $n$. Since $Q_{T}$ does not contain a subquiver of the form $D_{1}$, and since $Q_{T}$ is admissible it must contain $D_{2}$ or $D_{3}$. Thus there exists  a vertex $y'\neq x'$ and arrows $\alpha:x \rightarrow x'$, $\alpha':x \rightarrow y'$. Since $M_{\gamma}$ is indecomposable, we have that $\ker\varphi_{\alpha} \cap \ker\varphi_{\alpha'}= 0$. Now the commutativity  of the diagram

\[ \xymatrix{ 0\ar[d]_{0} & S(x)_x \ar[l]_{0}\ar[r]^0\ar[d]^{f_x} & 0 \ar[d]^0\\
\Bbbk^{e(\gamma,\tau_{y'})} &\Bbbk^{e(\gamma,\tau_{x})} \ar[r]^{ \varphi_{\alpha}}  \ar[l]_{ \varphi_{\alpha'}}&   \Bbbk^{e(\gamma,\tau_{y})}} \]

implies that $ \text{Im } f_{x} \subset \ker\varphi_{\alpha} \cap \ker\varphi_{\alpha'}$ and thus $f_{x}=0$, a contradiction. Therefore,  all simple subrepresentations of $M_{\gamma}$ are projectives, that is, $M_\gamma $ is socle-projective.\\

In the other direction, we suppose that $\varphi_T(\gamma)$ is a socle-projective representation in $\ind Q_{T}$ but $\gamma$ is not a $\star$-arc. Thus, there is an arc  $\tau_x$  in $T$ such that $e(\gamma,\tau_x)\neq 0$ and Definition \ref{definitionarc} is not satisfied, that is, for each fan $\Sigma$ containing $\tau_x$, we have $e(\gamma, \tau_{x})>e(\gamma, \tau_{\Sigma})$ if $\tau_x\in T\setminus (\tau_{n-2})_\vartriangle$, or   $e(\gamma, \tau_{\Sigma})=0$ if $\tau_x\in (\tau_{n-2})_\vartriangle$. \textcolor{black}{Definition \ref{definitionarc} implies that $\tau_x$ cannot be a peak-arc.} First, we consider  the case when  $\tau_{x}$ belongs to exactly one fan $\Sigma$. Then  $x$ is  a vertex different to $n-2$  because  $\tau_{n-2}$ always belongs to at least two fans.  Thus we may consider  $\tau_x$ to be the maximal arc in $\Sigma$  satisfying  $e(\gamma,\tau_{x})>e(\gamma, \tau_{\Sigma})$, We will show that $S(x)$ is a simple non-projective subrepresentation of $M_\gamma$, that is, there exists an injective map $f_x$  such that the diagram    
\[ \xymatrix{  S(x)_{x} \ar[r]^{0} \ar[d]_{f_{x}}  & 0\ar[d]^{0}\\
 \Bbbk^{e(\gamma,\tau_{x})} \ar[r]^{ \varphi_{\alpha_{x}}}  &   \Bbbk^{e(\gamma,\tau_{x'})}} \]commutes, where $\tau_{x'}$  can be obtained from $\tau_x$ by rotating anticlockwise about their common vertex.
If  $e(\gamma,\tau_{x'})=0$ then $\varphi_{\alpha_x}=0$, and since $e(\gamma,\tau_{x})>0$, there exists an injective map $f_x$ such that the diagram above commutes. \textcolor{black}{For} the choice of $\tau_x$, if $\tau_x \in (\tau_{n-2})_{\vartriangle}$, then it follows that $e(\gamma,\tau_{x'}) = 0$. Now suppose that $e(\gamma,\tau_{x'})\neq 0$.  The maximality of $\tau_x$ implies that $e(\gamma, \tau_{x'})\leq e(\gamma, \tau_{\Sigma})$,  when $\tau_{x'}\in T\setminus (\tau_{n-2})_{\vartriangle}$. Then  $e(\gamma, \tau_{x'})< e(\gamma, \tau_{x})$. Hence, $e(\gamma,\tau_{x'})=1$ and $e(\gamma,\tau_x)=2$. Thus, we choose a injective map $f_x$ such that $\text{Im} f_x= \text{ker}\varphi_{\alpha_{x}}$. This  is always possible because  the matrix associated to $\varphi_{\alpha_{x}}$ is a non-zero matrix of the form $\left(\begin{smallmatrix}a & b \end{smallmatrix}\right)$; thus, the matrix associated to $f_x$ would be  $\left(\begin{smallmatrix}-b\\ a \end{smallmatrix}\right)$. Therefore, $S(x)$ is a non-projective subrepresentation of $M_{\gamma}$, but this is a contradiction.\\
   
When $\tau_{x}$ belongs exactly to two fans $\Sigma_{1}$ and $\Sigma_{2}$, Definition \ref{definitionfan} implies that $\Sigma_{1} \cap \Sigma_{2}$ can contain more than one arc, because the configuration of  the vertices $n-1$ and $n$ in a Dynkin quiver of type $\DD_{n}$ (see Example \ref{examplequiverD}). Thus, we distinguish the following two possibilities. \\
   
(iv) If $\Sigma_{1} \cap \Sigma_{2}$ contains exactly one element, then $\Sigma_{1} \cap \Sigma_{2}=\{ \tau_{x}\}$. Since $\tau_x$ cannot be a peak-arc, $x$ is a source-vertex in the set of vertices $\{2,\dots,n-3\}$. Since $\gamma$ is not a $\star$-arc, there are  maximal arcs $\tau_{y}$   in $\Sigma_{1}$ and  $\tau_{y'}$ in $\Sigma_{2}$ such that  $e(\gamma, \tau_{y})>e(\gamma,\tau_{\Sigma_1})$  and $e(\gamma, \tau_{y'})>e(\gamma,\tau_{\Sigma_2})$. Given that $M_{\gamma}$ is in $\ind Q_{T}$, the case when $e(\gamma, \tau_{x})=2$ is not possible, because Remark \ref{applications}  would imply that one of  $e(\gamma,\tau_{\Sigma_1})$ and $e(\gamma,\tau_{\Sigma_2})$ is equal to two. Thus, $e(\gamma,\tau_x)=1$. Then Remark \ref{applications} yields  $e(\gamma,\tau_y)=e(\gamma,\tau_{y'})=1$. Thus,   $e(\gamma, \tau_{\Sigma_1})=e(\gamma, \tau_{\Sigma_2})=0$. As a consequence, by the previous arguments  above, $S(y)$ and $S(y')$ are non-projective simple subrepresentations of $M_{\gamma}$, a contradiction. \\

\begin{figure}[h!] 
\begin{center}
\begin{tikzpicture}[y=.3cm, x=.3cm,font=\normalsize, scale=1.1]
\draw (0,0) circle (1cm);
\filldraw (0,0) circle (1pt);
\filldraw (-1.4,-3) circle (1pt);
\filldraw (1.4,-3) circle (1pt);
\filldraw (-2.9,-1.7) circle (1pt);
\filldraw (-3.3,0) circle (1pt);
\draw[-, >=latex,black] (0,0) -- (-1.4,-3);
\draw[-, >=latex,black] (0,0) -- (1.4,-3);
\draw (-1.4,-3) .. controls (-1,1.7) and (1,1.7) .. (1.4,-3);
\draw (-2.9,-1.75) .. controls (-1,1.9) and (2,2.1) .. (1.4,-3);
\draw (-3.3,0) .. controls (-1,1.9) and (3,2.3) .. (1.4,-3);

\draw (9,0) circle (1cm);
\filldraw (9,0) circle (1pt);
\filldraw (7.6,-3) circle (1pt);
\filldraw (10.4,-3) circle (1pt);
\filldraw (11.9,-1.7) circle (1pt);
\filldraw (12.3,0) circle (1pt);
\draw[-, >=latex,black] (9,0) -- (7.6,-3);
\draw[-, >=latex,black] (9,0) -- (10.4,-3);
\draw (7.6, -3) .. controls (8,1.7) and (10,1.7) .. (10.4,-3);
\draw (7.6,-3) .. controls (7,2.1) and (10,1.9) .. (11.9,-1.7);
\draw (7.6,-3) .. controls (6,2.3) and (10,1.9) .. (12.3,0);

\draw (18,0) circle (1cm);
\filldraw (18,0) circle (1pt);
\filldraw (16.6,-3) circle (1pt);
\filldraw (19.4,-3) circle (1pt);
\filldraw (15.1,-1.7) circle (1pt);
\filldraw (14.7,0) circle (1pt);
\draw [-, >=latex,black] (17.15,-1.6)-- (17.35,-1.7);
\draw[-, >=latex,black] (18,0) -- (16.6,-3);
\draw (18, 0) .. controls (18,-0.8) and (18.5,-1.5) .. (16.6,-3);
\draw (16.6, -3) .. controls (17,1.7) and (19,1.7) .. (19.4,-3);
\draw (15.1,-1.7) .. controls (17,1.9) and (20,2.1) .. (19.4,-3);
\draw (14.7,0) .. controls (17,1.9) and (21,2.3) .. (19.4,-3);

\end{tikzpicture}

\end{center}
    \caption{Two fans with an intersection greater or equal to two arcs (left and center figure) and three fans with a non-empty intersection (right figure).}\label{figureintersections}
\end{figure}
   
(v) If $\vert\Sigma_{1} \cap \Sigma_{2}\vert\geq 2$, we have that $\Sigma_{1} \cap \Sigma_{2}=\Sigma_{1}-\{ \tau_{x'}\}$ with $x' \in \{n-1, n\}$, then $\tau_{n-2}$ belongs to $\Sigma_{1}$ and $\Sigma_{2}$. If $\Sigma_{1}$ and $\Sigma_{2}$ have the same peak-arc, $Q_{T}$ is not an admissible quiver of type $\DD_{n}$. If $\Sigma_{1}$ and $\Sigma_{2}$ have different  peak-arcs $\tau_{\Sigma_{1}}$ and $\tau_{\Sigma_{2}}$,  then $\tau_{x}$  is in  $(\tau_{n-2})_{ \vartriangle}$\textcolor{black}{, and $e(\gamma, \tau_{\Sigma_{1}})=e(\gamma, \tau_{\Sigma_{2}})=0$}  because  $\tau_{n-2}$ belongs to the intersection of the fans and $\tau_{x}$ does not satisfy the condition (b) in Definition \ref{definitionarc}. In particular, by hypothesis $e(\gamma,\tau_{n-2})\leq 1$, since $M_{\gamma}$ is in $\ind Q_{T}$. Thus,  we proceed as in the case (i) (see Figure  \ref{figureintersections}).\\

When $\tau_{x}$ belongs to three fans $\Sigma_{1}$, $\Sigma_{2}$ and $\Sigma_{3}$, we have that  $x$ is a source-vertex in $Q_{T}$ with $\tau_{x} \in \tau_{n-2_{\vartriangle}}$, by using the above arguments,  $e(\gamma,\tau_{n-2})\leq 1$ and  we have that $e(\gamma, \tau_{\Sigma_{i}})=0$ for each $i=1,2,3$. In the same way, we procedure as in the case (iv) (see Figure  \ref{figureintersections}).\\

Therefore, $\gamma$ is a $\star$- arc.
\end{proof}


\subsection{The Category $(\mathcal{C}/T)_{F}$} \label{categorysp} Let $Q$ be a admissible quiver of type $\DD$, let $F$ be a set of alien arrows over $Q$, and let $T$ be the triangulation associated to $Q$. \textcolor{black}{The} category $\mathcal{C}/T$ is equivalent to the category of representations $\textup{rep}\,Q_T$ of the quiver $Q_T.$ We define a $\Bbbk$-linear additive  full subcategory $(\mathcal{C}/T)_{F}$ of $\mathcal{C}/T$ as follows. Objects are  direct sums of sp-arcs. The space of morphisms from an sp-arc $\gamma$ to an sp-arc $\gamma'$
is a quotient of the vector space over $\Bbbk$ spanned by composition of \textit{sp-moves}
from $\gamma$ to $\gamma'$ and the subspace by the \textit{sp-mesh relations}. \\

In the same way as in \cite{schifflerserna, Schiffler}, an sp-move  $\rho$ from an sp-arc $\gamma$ to  an  sp-arc $\gamma'$ is a composition of elementary moves  $\rho:\gamma=\gamma_{0} \xrightarrow{\rho_{1}} \gamma_{1} \xrightarrow{\rho_{2}} \cdots \xrightarrow{\rho_{m}} \gamma_{m}=\gamma' \ $  and  the arcs $\gamma_{1}, \dots, \gamma_{m-1}$ are not sp-arcs. Also, for each set of compositions of two sp-moves $$\{\gamma \xrightarrow{\rho_{1}}\gamma_{1} \xrightarrow{\rho'_{1}}\gamma', \gamma \xrightarrow{\rho_{2}}\gamma_{2} \xrightarrow{\rho'_{2}}\gamma', \cdots,  \gamma \xrightarrow{\rho_{p}}\gamma_{p} \xrightarrow{\rho'_{p}}\gamma' \},$$ where $\gamma_{1}, \gamma_{2},\dots, \gamma_{p}$  are distinct sp-arcs,  and $p \in \{2,3\}$, we define the \textit{sp-mesh relation} $m_{\gamma,\gamma'}$ as follows: 
In the case $p=2$, we have $\gamma \xrightarrow{\rho_{}}\gamma_{1} \xrightarrow{\rho'_{1}}\gamma'=\gamma \xrightarrow{\rho_{2}}\gamma_{2} \xrightarrow{\rho'_{2}}\gamma'$. \textcolor{black}{According to \cite{schifflerserna}, if one of the intermediate sp-segments is either a boundary edge or a diagonal in $T$, then the corresponding term in the mesh relation is replaced by zero, provided $\gamma$ and $\gamma'$ are not simultaneously tagged loops of the same sign, i.e., it is not the case that $\gamma = M^{\epsilon}_{b,b}$ and $\gamma' = M^{\epsilon}_{a,a}$ for $b$ the clockwise neighbor of $a$ and some sign $\epsilon$.} For $p=3$,  the mesh relation is given by $\rho_{1}\rho'_{1}+\rho_{2}\rho'_{2}+\rho_{3}\rho'_{3}=0$. In particular, for this case,  $\gamma_{i}=M^{\epsilon}_{a,a}$, and $\gamma_{j}=M^{\epsilon'}_{a,a}$ with $\epsilon \neq \epsilon'$, $1 \leq i \neq j \leq 3$, and where $a$ is any vertex of the polygon. 

\subsection{The Functor $\Theta$} Let $\P$ be the poset of type $\DD$ associated with the quiver $Q^{F}$, where $Q$ is an admissible Dynkin quiver of type $\DD$ and $F$ is a set of alien arrows over $Q$. Moreover, let $T$ be a triangulation associated with $Q$.

\begin{remark} \label{varphi'}
Given that  $\P=Q^{F}$, if $x \preceq y$ in $\P$, then there exists a sink vertex $z$ in $Q$ such that there are two paths $\rho$  and  $\rho'$ from $x$ to $z$ and from $y$ to $z$ respectively, \textcolor{black}{; $\tau_{z}$ is the peak-arc} of the fans $\Sigma_{x}$ and $\Sigma_{y}$ that contain to $\tau_{x}$ and $\tau_{y}$ respectively.  For any sp-arc $\gamma$  with the conditions $ e(\gamma, \tau_{x})=1$ and $ e(\gamma, \tau_{y})=2$ (or $e(\gamma, \tau_{y})=1$ and $ e(\gamma, \tau_{x})=2$). Observe that, since $\gamma$ is not a frozen arc and is a $\star$-arc, it necessarily follows that $e(\gamma,\tau_y)=e(\gamma,\tau_z)$. Thus, it is possible to find  arcs $\tau_{x'}$ and $\tau_{x''}$  such that the arc $\tau_{x'}$ is the maximal arc among the arcs of the fan $\Sigma_x$ whose number of crossings with $\gamma$ is $e(\gamma,\tau_x)$, and there exists an arrow $\alpha' \colon x' \to x''$ in $Q$ (see Figure \ref{mapvarphi})\textcolor{black}{, with $e(\gamma,\tau_{x''})=e(\gamma,\tau_{y})$}. In this case, we can take the map $\varphi_{\alpha'}$ from $\varphi_{T}(\gamma)_{x'}$ to $\varphi_{T}(\gamma)_{x''}$ in  the  indecomposable representation $ \varphi_{T} (\gamma) \in \rep Q$.

\begin{figure}[h!] 

\tikzset{every picture/.style={line width=0.6pt}} 

\begin{tikzpicture}[x=0.6pt,y=0.6pt,yscale=-0.6,xscale=0.6]

\draw    (280,284.5) -- (302.59,307.09) ;
\draw [shift={(304,308.5)}, rotate = 225] [color={rgb, 255:red, 0; green, 0; blue, 0 }  ][line width=0.75]    (10.93,-3.29) .. controls (6.95,-1.4) and (3.31,-0.3) .. (0,0) .. controls (3.31,0.3) and (6.95,1.4) .. (10.93,3.29)   ;
\draw    (234,239.5) -- (244,249.5) -- (256.48,260.2) ;
\draw [shift={(258,261.5)}, rotate = 220.6] [color={rgb, 255:red, 0; green, 0; blue, 0 }  ][line width=0.75]    (10.93,-3.29) .. controls (6.95,-1.4) and (3.31,-0.3) .. (0,0) .. controls (3.31,0.3) and (6.95,1.4) .. (10.93,3.29)   ;
\draw    (197,201) -- (214.6,219.07) ;
\draw [shift={(216,220.5)}, rotate = 225.74] [color={rgb, 255:red, 0; green, 0; blue, 0 }  ][line width=0.75]    (10.93,-3.29) .. controls (6.95,-1.4) and (3.31,-0.3) .. (0,0) .. controls (3.31,0.3) and (6.95,1.4) .. (10.93,3.29)   ;
\draw    (149,154.5) -- (163.5,169) -- (167.59,173.09) ;
\draw [shift={(169,174.5)}, rotate = 225] [color={rgb, 255:red, 0; green, 0; blue, 0 }  ][line width=0.75]    (10.93,-3.29) .. controls (6.95,-1.4) and (3.31,-0.3) .. (0,0) .. controls (3.31,0.3) and (6.95,1.4) .. (10.93,3.29)   ;
\draw    (113,119.5) -- (128.63,136.05) ;
\draw [shift={(130,137.5)}, rotate = 226.64] [color={rgb, 255:red, 0; green, 0; blue, 0 }  ][line width=0.75]    (10.93,-3.29) .. controls (6.95,-1.4) and (3.31,-0.3) .. (0,0) .. controls (3.31,0.3) and (6.95,1.4) .. (10.93,3.29)   ;
\draw    (77,82.5) -- (96.59,102.09) ;
\draw [shift={(98,103.5)}, rotate = 225] [color={rgb, 255:red, 0; green, 0; blue, 0 }  ][line width=0.75]    (10.93,-3.29) .. controls (6.95,-1.4) and (3.31,-0.3) .. (0,0) .. controls (3.31,0.3) and (6.95,1.4) .. (10.93,3.29)   ;
\draw    (38,40.5) -- (56.62,60.05) ;
\draw [shift={(58,61.5)}, rotate = 226.4] [color={rgb, 255:red, 0; green, 0; blue, 0 }  ][line width=0.75]    (10.93,-3.29) .. controls (6.95,-1.4) and (3.31,-0.3) .. (0,0) .. controls (3.31,0.3) and (6.95,1.4) .. (10.93,3.29)   ;
\draw    (-1,2) -- (17.57,20.1) ;
\draw [shift={(19,21.5)}, rotate = 224.27] [color={rgb, 255:red, 0; green, 0; blue, 0 }  ][line width=0.75]    (10.93,-3.29) .. controls (6.95,-1.4) and (3.31,-0.3) .. (0,0) .. controls (3.31,0.3) and (6.95,1.4) .. (10.93,3.29)   ;
\draw    (342,282.5) -- (322.33,304.51) ;
\draw [shift={(321,306)}, rotate = 311.78] [color={rgb, 255:red, 0; green, 0; blue, 0 }  ][line width=0.75]    (10.93,-3.29) .. controls (6.95,-1.4) and (3.31,-0.3) .. (0,0) .. controls (3.31,0.3) and (6.95,1.4) .. (10.93,3.29)   ;
\draw [dash pattern={on 0.84pt off 2.51pt}]   (62,65.5) -- (73,77.5) ;
\draw [dash pattern={on 0.84pt off 2.51pt}]   (175,179.5) -- (190,194.5) ;
\draw [dash pattern={on 0.84pt off 2.51pt}]   (263,267) -- (276,279.5) ;
\draw [dash pattern={on 0.84pt off 2.51pt}]   (-20,-18) -- (-7,-4.5) ;
\draw  [dash pattern={on 0.84pt off 2.51pt}]  (365,255.5) -- (348,275.5) ;
\draw    (760,286.5) -- (782.59,309.09) ;
\draw [shift={(784,310.5)}, rotate = 225] [color={rgb, 255:red, 0; green, 0; blue, 0 }  ][line width=0.75]    (10.93,-3.29) .. controls (6.95,-1.4) and (3.31,-0.3) .. (0,0) .. controls (3.31,0.3) and (6.95,1.4) .. (10.93,3.29)   ;
\draw    (630,156.5) -- (652.56,178.12) ;
\draw [shift={(654,179.5)}, rotate = 223.78] [color={rgb, 255:red, 0; green, 0; blue, 0 }  ][line width=0.75]    (10.93,-3.29) .. controls (6.95,-1.4) and (3.31,-0.3) .. (0,0) .. controls (3.31,0.3) and (6.95,1.4) .. (10.93,3.29)   ;
\draw    (595,121.5) -- (609.63,137.04) ;
\draw [shift={(611,138.5)}, rotate = 226.74] [color={rgb, 255:red, 0; green, 0; blue, 0 }  ][line width=0.75]    (10.93,-3.29) .. controls (6.95,-1.4) and (3.31,-0.3) .. (0,0) .. controls (3.31,0.3) and (6.95,1.4) .. (10.93,3.29)   ;
\draw    (560,85.5) -- (576.66,104.01) ;
\draw [shift={(578,105.5)}, rotate = 228.01] [color={rgb, 255:red, 0; green, 0; blue, 0 }  ][line width=0.75]    (10.93,-3.29) .. controls (6.95,-1.4) and (3.31,-0.3) .. (0,0) .. controls (3.31,0.3) and (6.95,1.4) .. (10.93,3.29)   ;
\draw    (517,42.5) -- (536.59,62.09) ;
\draw [shift={(538,63.5)}, rotate = 225] [color={rgb, 255:red, 0; green, 0; blue, 0 }  ][line width=0.75]    (10.93,-3.29) .. controls (6.95,-1.4) and (3.31,-0.3) .. (0,0) .. controls (3.31,0.3) and (6.95,1.4) .. (10.93,3.29)   ;
\draw    (479,4) -- (498.6,24.07) ;
\draw [shift={(500,25.5)}, rotate = 225.67] [color={rgb, 255:red, 0; green, 0; blue, 0 }  ][line width=0.75]    (10.93,-3.29) .. controls (6.95,-1.4) and (3.31,-0.3) .. (0,0) .. controls (3.31,0.3) and (6.95,1.4) .. (10.93,3.29)   ;
\draw    (822,284.5) -- (802.33,306.51) ;
\draw [shift={(801,308)}, rotate = 311.78] [color={rgb, 255:red, 0; green, 0; blue, 0 }  ][line width=0.75]    (10.93,-3.29) .. controls (6.95,-1.4) and (3.31,-0.3) .. (0,0) .. controls (3.31,0.3) and (6.95,1.4) .. (10.93,3.29)   ;
\draw [dash pattern={on 0.84pt off 2.51pt}]   (542,67.5) -- (553,79.5) ;
\draw [dash pattern={on 0.84pt off 2.51pt}]   (659,185.5) -- (751,277.5) ;
\draw [dash pattern={on 0.84pt off 2.51pt}]   (460,-16) -- (473,-2.5) ;
\draw [dash pattern={on 0.84pt off 2.51pt}]   (845,257.5) -- (828,277.5) ;
\draw    (873,222.5) -- (851.24,249.93) ;
\draw [shift={(850,251.5)}, rotate = 308.42] [color={rgb, 255:red, 0; green, 0; blue, 0 }  ][line width=0.75]    (10.93,-3.29) .. controls (6.95,-1.4) and (3.31,-0.3) .. (0,0) .. controls (3.31,0.3) and (6.95,1.4) .. (10.93,3.29)   ;
\draw    (913,173.5) -- (893.29,196.97) ;
\draw [shift={(892,198.5)}, rotate = 310.03] [color={rgb, 255:red, 0; green, 0; blue, 0 }  ][line width=0.75]    (10.93,-3.29) .. controls (6.95,-1.4) and (3.31,-0.3) .. (0,0) .. controls (3.31,0.3) and (6.95,1.4) .. (10.93,3.29)   ;
\draw  [dash pattern={on 0.84pt off 2.51pt}]  (933,151.5) -- (918,168.5) ;

\draw (306,311.9) node [anchor=north west][inner sep=0.75pt]  [xscale=0.75,yscale=0.75]  {$z$};
\draw (217,222.4) node [anchor=north west][inner sep=0.75pt]  [xscale=0.75,yscale=0.75]  {$y$};
\draw (132,132.9) node [anchor=north west][inner sep=0.75pt]  [xscale=0.75,yscale=0.75]  {$x''$};
\draw (98,98.4) node [anchor=north west][inner sep=0.75pt]  [xscale=0.75,yscale=0.75]  {$x'$};
\draw (130,105.4) node [anchor=north west][inner sep=0.75pt]  [xscale=0.75,yscale=0.75]  {$\alpha'$};
\draw (20,25.4) node [anchor=north west][inner sep=0.75pt]  [xscale=0.75,yscale=0.75]  {$x$};
\draw (786,313.9) node [anchor=north west][inner sep=0.75pt]  [xscale=0.75,yscale=0.75]  {$z$};
\draw (611,132.9) node [anchor=north west][inner sep=0.75pt]  [xscale=0.75,yscale=0.75]  {$x''$};
\draw (576,98.4) node [anchor=north west][inner sep=0.75pt]  [xscale=0.75,yscale=0.75]  {$x'$};
\draw (608,105.4) node [anchor=north west][inner sep=0.75pt]  [xscale=0.75,yscale=0.75]  {$\alpha'$};
\draw (499,26.4) node [anchor=north west][inner sep=0.75pt]  [xscale=0.75,yscale=0.75]  {$x$};
\draw (874,200.4) node [anchor=north west][inner sep=0.75pt]  [xscale=0.75,yscale=0.75]  {$y$};

\end{tikzpicture}

\caption{Possible situations when $x \preceq y$ in $\P$. There is a path from $x$ to $y$  (left figure) or there is no such path (right figure) in $Q$.} \label{mapvarphi}

\end{figure}
\end{remark}

 We define a $\Bbbk$-linear additive functor \[ \Theta:(\mathcal{C}/T)_{F} \rightarrow \md_{sp}(\Bbbk \P)\]
from the category of sp-arcs to the category of finitely generated socle-projective
$k\P$-modules as follows.  For any sp-arc $\gamma$,  $\Theta(\gamma)=M^{\gamma}=(M_{x}^{\gamma}, {_y}h_x^{\gamma})$ where $M_{x}^{\gamma}= \Bbbk^{e(\gamma, \tau_{x})}$  and for $x \preceq y$ in $\P$, 
\begin{equation} \label{equationfunctor}
 {_y}h_x^{\gamma}= \left\{ \begin{array}{ll}
\text{id}_{\Bbbk^{e(\gamma,\tau_{x})}} &  \mbox{if }  e(\gamma,\tau_{x})=e(\gamma,\tau_{y})\neq 0, \\
 \varphi_{\alpha'} &  e(\gamma,\tau_{x})=1 \text{ and } e(\gamma,\tau_{y})=2 \text{ or }  e(\gamma,\tau_{y})=1 \text{  and }e(\gamma,\tau_{x})=2,\\
0 & \mbox{otherwise},
\end{array}
\right.    
\end{equation}
with $\varphi_{\alpha'}$ the map described in Remark \ref{varphi'}.  We define the functor $\Theta$ over any sp-move $\rho:\gamma \rightarrow \gamma'$ as follows: $$\Theta(\rho)=\Theta(\rho_{m}\dots \rho_{2}\rho_{1})= \varphi_{T}(\rho_{m}\dots \rho_{2}\rho_{1})=f^{m}\cdots f^{2} f^{1}$$ where  $f^{j}=\varphi_{T}(\rho_{m})$ is the irreducible morphism in $\rep Q$  from $\varphi_{T}(\gamma_{j-1})$ to $\varphi_{T}(\gamma_{j})$  for $1 \leq j \leq m$.
\begin{lemma} \label{lemmafunctor}
$\Theta$ is well-defined.
\end{lemma}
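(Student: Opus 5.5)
To show that $\Theta$ is well-defined we have to check three things: that $\Theta(\gamma)=M^{\gamma}$ is a $\Bbbk\P$-module, that it is socle-projective, and that the assignment on morphisms is compatible with the relations defining $(\mathcal C/T)_F$ and lands in morphisms of $\Bbbk\P$-modules.

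\textbf{Step 1: $M^{\gamma}$ is a $\Bbbk\P$-module.} Fix an sp-arc $\gamma$. We verify condition (a) of Proposition \ref{conditions,a,b}, namely ${_w}h_y^{\gamma}\,{_y}h_x^{\gamma}={_w}h_x^{\gamma}$ for $x\preceq y\preceq w$. We split according to the triple $(e(\gamma,\tau_x),e(\gamma,\tau_y),e(\gamma,\tau_w))\in\{0,1,2\}^3$.

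\begin{itemize}
\item If some entry is $0$, both sides vanish. This uses that, by Remark \ref{varphi'} and the fact that $\gamma$ is a $\star$-arc not frozen by any alien arrow, a nonzero crossing at $x$ and at $w$ forces a nonzero crossing at every $y$ between them.
\item If all entries are equal, both sides are identities.
\item In the mixed $1/2$ cases, each nonidentity map is the structure map $\varphi_{\alpha'}$ of the indecomposable representation $\varphi_T(\gamma)$ (Remark \ref{applications}). The composite identity then reduces to the commutativity of the path relations in $Q$, together with the observation in Remark \ref{varphi'} that $e(\gamma,\tau_y)=e(\gamma,\tau_z)$ for the relevant sink $z$.
\item Relations coming from alien arrows $\alpha\in F$ are handled by the non-frozen hypothesis. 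It guarantees that whenever $\gamma$ crosses $\tau_{s(\alpha)}$, the crossing numbers along $\alpha$ match those along the corresponding path to the sink. Hence defining ${_y}h_x^{\gamma}$ through $\varphi_{\alpha'}$ is independent of the chosen path.
\end{itemize}

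\textbf{Step 2: $M^{\gamma}$ is socle-projective.} We check condition (b) of Proposition \ref{conditions,a,b}. By Proposition \ref{propstar}, $\varphi_T(\gamma)$ is socle-projective in $\rep Q_T$. Every maximal element of $\P$ is a sink of $Q$, and every map ${_z}h_x^{\gamma}$ to a maximal $z$ coincides with the composite along a path of $Q$ in $\varphi_T(\gamma)$. Therefore $I_x$ equals the corresponding intersection of kernels for $\varphi_T(\gamma)$, which is zero.

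\textbf{Step 3: compatibility on morphisms.} $\Theta(\rho)$ is defined as $\varphi_T(\rho)$ via the equivalence \eqref{equivalence1}, and the vector spaces agree: $M^\gamma_x=\varphi_T(\gamma)_x$. So a morphism of $Q$-representations between sp-arcs commutes with all maps along paths in $Q$. It remains to check commutativity with the maps ${_y}h_x$ arising only from alien arrows. We do this by comparing both sides through the sink $z$ using Step 1: the map ${_y}h_x$ factors compatibly with maps to $z$, and ${_z}h$ is injective on the relevant part by socle-projectivity.

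Finally we check that the sp-mesh relations are sent to zero. For $p=3$ this is the image under $\varphi_T$ of the mesh relation of $\mathcal C$, which holds in $\rep Q_T$. For $p=2$, when an intermediate term is replaced by zero, the replaced arc is a boundary edge or an arc of $T$, which $\varphi_T$ sends to zero. Hence the surviving two-term relation is exactly the image of an ordinary mesh relation. A composite of elementary moves through non-sp-arcs is still a morphism in $\mathcal C/T$, so $\Theta$ respects composition.

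The main obstacle is Step 1 in the presence of alien arrows together with $2$-dimensional spaces near the branch vertex $n-2$. There one must show that the choice of $\varphi_{\alpha'}$ in Remark \ref{varphi'} is independent of the chosen $x',x''$ and is compatible with composition. My plan is to reduce to the three one-dimensional subspaces $U_1,U_2,U_3$ of Remark \ref{applications}, and to use Corollary \ref{suppoftypeD}(c), which says that the $2$-dimensional vertices form a type-$\AA$ segment containing $n-2$. With these, the required identities become identities among fixed nonzero row and column vectors.
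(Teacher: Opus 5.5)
Your Step 1 and your treatment of the mesh relations follow the paper's outline. Deriving condition (b) from Proposition~\ref{propstar} is a valid shortcut, since you only need $I_x$ to be contained in the intersection of kernels over the sinks of $Q$ above $x$.

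The gap is in Step 3, at the claim that ${_z}h$ ``is injective on the relevant part by socle-projectivity''. Socle-projectivity of $M^{\gamma'}$ only says that the maps ${_{z'}}h^{\gamma'}_y$ are jointly injective, with $z'$ running over \emph{all} maximal elements above $y$. The single map ${_z}h^{\gamma'}_y$ is injective only when $z$ is the unique sink of $Q$ above $y$. If $y$ also lies below a second sink $z''$, the relation $x\prec z''$ may exist only through the alien arrow. Then $\varphi_T$ gives no commutativity at $z''$, and the reduction becomes circular.

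This cannot be fixed within the present definitions. Consider the following data.
\begin{itemize}
\item Take $n=5$, $Q\colon 1\to2\leftarrow3\leftarrow5$, $3\to4$, and $F=\{\alpha\colon1\to5\}$.
\item $Q$ is admissible, with $D_2$ on $\{5,3,2,4\}$ and sinks $2,4$.
\item $\alpha$ is alien for the $\AA$-subquiver on $\{1,2,3,5\}$, since $5$ is an extreme source there. $F$ restricts to $\varnothing$ on the other $\AA$-subquiver.
\item The resulting poset is $1\prec5\prec3\prec2,4$.
\item Let $\delta,\gamma'$ be the arcs with crossing vectors $(1,1,2,1,1)$ and $(0,0,1,1,1)$. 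Both are socle-projective, hence $\star$-arcs.
\item Neither is frozen by $\alpha$: $e(\delta,\tau_5)=1=e(\delta,\tau_2)$, and $\gamma'$ does not cross $\tau_1$.
\end{itemize}
The sequence $0\to(1,1,1,0,0)\to(1,1,2,1,1)\to(0,0,1,1,1)\to0$ is an AR sequence in $\rep Q$. Hence the irreducible map $\delta\to\gamma'$ is an sp-move. It is an isomorphism at vertex $5$ and zero at vertex $1$. But ${_5}h^{\delta}_1=\mathrm{id}$ and $M^{\gamma'}_1=0$, so the square for $1\prec5$ fails. In fact $\Hom_{\Bbbk\P}(M^{\delta},M^{\gamma'})=0$.

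In your notation, $y=5$ and $z=2$, so ${_2}h^{\gamma'}_5=0$, and the relation $1\prec4$ exists only via $\alpha$. The paper's proof fails at the same point: its assertion $e(\gamma',\tau_y)=e(\gamma',\tau_\Sigma)$ is false for this $\gamma'$.

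So an extra hypothesis on $F$ is needed. One choice is that the target of every alien arrow lies below only one sink of $Q$; this forbids alien arrows ending at $x$ in case (c) of Figure~\ref{fig-generalquivers}. Under that hypothesis, every relation $u\prec z'$ with $z'$ maximal already holds in $\P_Q$. Your Step 3 then goes through, provided you phrase it via joint injectivity over all maximal $z'\succ y$ (condition (b) for $M^{\gamma'}$) rather than injectivity at the single sink $z$.
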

\begin{proof}
Let $\P$ be a poset of type $\DD$ and let $\P_{Q}$ be the poset whose vertices are the same vertices of $\P$, and $\P$ is obtained from $\P_{Q}$ by adding edges to the Hasse diagram corresponding to the alien arrows in $F$. First, we prove that $M^{\gamma}$ belongs to $\md_{sp}(\Bbbk \P)$, for this, we will check conditions (a) and (b) in Proposition \ref{conditions,a,b}. To prove (a), we suppose that $x \preceq y \preceq w$ in $\mathcal{P}$, and we only consider the possibility when  there are parallel paths from $x$ to $w$ in $\P$. Suppose that $x \npreceq y$ and $y \npreceq w$ in $\P_{Q}$, then there are two alien arrows $\alpha_{1}:x' \rightarrow y'$, $\alpha_{2}:y'' \rightarrow w'$ on vertices of a $z$-subquiver $Q^{(z)}$ of $Q$  such that $x \preceq x' \prec w' \preceq w \preceq z$ and $y' \preceq y \preceq y'' \prec z$ in $Q$ where $z$ is  a sink vertex in $Q$. Since $\gamma$ is a sp-arc, we obtain the following six configurations of the crossing numbers
$e(\gamma,\tau_x)$, $e(\gamma,\tau_y)$, and $e(\gamma,\tau_w)$, which satisfy the possible  dimension vectors described in Remark~\ref{applications}:

\begin{equation} \label{cases}
 \begin{array}{cc}
      (C1) & e(\gamma, \tau_{y})=e(\gamma, \tau_{w})=2\text{ and } e(\gamma, \tau_{x})=1. \\
      (C2) & e(\gamma, \tau_{x})=e(\gamma, \tau_{w})=2 \text{ and } e(\gamma, \tau_{y})=1.\\
      (C3) & e(\gamma, \tau_{y})=e(\gamma, \tau_{w})=1 \text{ and } e(\gamma, \tau_{x})=2.\\
      (C4)& e(\gamma, \tau_{x})=e(\gamma, \tau_{w})=1 \text{ and } e(\gamma, \tau_{y})=2.\\
      (C5) & e(\gamma, \tau_{x})=e(\gamma, \tau_{y})=1 \text{ and } e(\gamma, \tau_{w})=2.\\
      \textcolor{black}{(C6)} & \textcolor{black}{e(\gamma, \tau_{x})=e(\gamma, \tau_{y})=e(\gamma, \tau_{w}) \ (=1 \text{ or } 2).}
 \end{array}
\end{equation}

Here, $\tau_x$, $\tau_{x'}$, $\tau_{w'}$, $\tau_w$, and $\tau_z$ are arcs belonging to a fan $\Sigma_x$, while $\tau_{y'}$, $\tau_y$, $\tau_{y''}$, and $\tau_z$ belong to a fan $\Sigma_y$, with $\tau_z=\tau_{\Sigma}$ being the peak-arc of both $\Sigma_x$ and $\Sigma_y$.\\

 If $(C1)$ is satisfied, there is a path $\rho: x \rightarrow \dots \rightarrow x' \xrightarrow {\alpha'} x'' \rightarrow \dots \rightarrow  w \rightarrow \dots \rightarrow z$  in $Q$ that satisfies Remark \ref{applications} and Identity \ref{equationfunctor}, then  $\varphi_{\alpha'}={_w}h_x^{\gamma}={_y}h_x^{\gamma}$. Thus $\varphi_{\alpha'}={_w}h_x^{\gamma}={_w}h_y^{\gamma} \cdot {_y}h_x^{\gamma}=\text{id}_{\Bbbk^{2}}\cdot \varphi_{\alpha'}$. If $(C2)$ and $(C5)$ occur,  $\gamma$ is frozen by $\alpha_{1}$, but this is a contradiction. If $(C3)$ occurs, there exists a path $\rho$ as above in $Q$ that satisfies Identity \ref{equationfunctor}, and $\varphi_{\alpha'}={_w}h_x^{\gamma}={_y}h_x^{\gamma}$. Thus, $\varphi_{\alpha'}={_w}h_x^{\gamma}={_w}h_y^{\gamma} \cdot {_y}h_x^{\gamma}=\text{id}_{\Bbbk} \cdot \varphi_{\alpha'}$. If it is satisfied $(C4)$, we have that $e(\gamma, \tau_{\Sigma})$ is $1$ or $2$, if $e(\gamma, \tau_{\Sigma})=1$, the vertex $y$ must be the vertex $n-2$, but $\alpha_{1}$ does not belong to the set $F$. If $e(\gamma, \tau_{\Sigma})=2$, again $\gamma$ is frozen by $\alpha_{2}$. \textcolor{black}{The} situation $e(\gamma, \tau_{x})=e(\gamma,\tau_{y})=2$ and  $e(\gamma, \tau_{w})=1$ does not satisfy Remark \ref{applications}. Finally, the case $(C6)$ is satisfied in the natural way. \\

In order to prove (b), for any $x$ in $\P\setminus\max\P$ such that $x \prec z$ in $\P$ with $z \in \max \P$. If $e(\gamma, \tau_{x})=0$, then  $\text{ker}{_z}h_x^{\gamma}=0$. If $e(\gamma, \tau_{x})=1$ there is a peak-arc $\tau_{\Sigma}$ associated with a vertex $z \in \max \P$ where $e(\gamma, \tau_{\Sigma}) \neq 0$, so ${_z}h_x^{\gamma}= \text{id}_{\Bbbk}$ or ${_z}h_x^{\gamma}= \left(\begin{smallmatrix}a\\ b \end{smallmatrix}\right)$, but both cases satisfy that $\text{ker}{_z}h_x^{\gamma}=0$. If $e(\gamma, \tau_{x})=2$, we have two possibilities: If $e(\gamma,\tau_{\Sigma})=2$, then $\text{ker}{_z}h_x^{\gamma}=0$. If $e(\gamma,\tau_{\Sigma})=1$,  there is a peak-arc $\tau_{\Sigma_{1}} \neq \tau_{\Sigma}$ in $T_{Q}$ associated to a  maximal vertex $z'\neq z$ in $Q$  such that $e(\gamma, \tau_{\Sigma_{1}})=1$ or $2$ when $x$ is the vertex $n-2$  or $x$ is a vertex source $Q$, respectively. Both cases satisfy that $\text{ker}{_z}h_x^{\gamma} \cap \text{ker}{_{z'}}h_x^{\gamma}=0$. Thus $ \displaystyle \bigcap_{z \in \max \P}\text{ker}{_z}h_x^{\gamma}=0$.\\

Secondly, we check that $\Theta(\rho)=(\Theta(\rho)_{x})_{x \in \P}$ is well defined by any $\rho: \gamma \rightarrow \gamma'$ sp-move in $(C/T)$. According \cite{schifflerserna}, we will show that for any relation $x \preceq y$ in $\P$, the diagram \[ \xymatrix{  M_{x}^{\gamma} \ar[r]^{{_y}h_x^{\gamma}} \ar[d]_{\Theta(\rho)_{x}}  & M_{x}^{\gamma} \ar[d]^{\Theta(\rho)_{y}}\\
  M_{x}^{\gamma'} \ar[r]^{{_y}h_x^{\gamma'}} &   M_{y}^{\gamma'}} \]commutes. If $x \preceq y$ in $\P_{Q}$. Given that $\alpha_{1} \dots \alpha_{t}$ is a path from $x$ to $y$ in $Q$ for any $t \in \mathbb{Z}_{\geq 0}$, then ${_y}h_x^{\gamma}=\varphi_{t} \dots \varphi_{1}$ and ${_y}h_x^{\gamma'}= \varphi'_{t} \dots \varphi'_{1}$ where $\varphi_{t}, \dots ,\varphi_{1}, \varphi'_{t} \dots \varphi'_{1}$ satisfy Remark \ref{applications}. Now, by definition of $\Theta$, we have that  $\Theta(\rho)_{x'}=\Theta(\rho_{m} \dots \rho_{2}\rho_{1})_{x'}=f^{m}_{x'} \dots f^{2}_{x'} f^{1}_{x'}$ for any $x'$ in $\P_{Q}$. Thus, the diagram  commutes in $\rep Q$, i.e., the diagram  commutes in $\md_{sp} (\Bbbk \P)$. If $x \npreceq y$ in $\P_{Q}$, there exists an alien arrow $\alpha: x' \rightarrow y'$ on vertices of a $z$-subquiver $Q^{(z)}$ of $Q$ where $ x \preceq x' \prec z$ and $y' \preceq y \prec z$ in $\P_{Q}$\textcolor{black}{, where $e(\gamma, \tau_{y})=e(\gamma,\tau_{\Sigma})$} and $e(\gamma',\tau_{y})=e(\gamma', \tau_{\Sigma})$, because if  $e(\gamma, \tau_{y})>e(\gamma,\tau_{\Sigma})$ and $e(\gamma', \tau_{y})>e(\gamma',\tau_{\Sigma})$ their corresponding dimension vectors  do not satisfy Remark \ref{applications}. For each possible situation,  ${_z}h_x^{\gamma}={_y}h_x^{\gamma}$ and ${_z}h_x^{\gamma'}={_y}h_x^{\gamma'}$ (see proof (a) above), so $\Theta(\rho)_{y}=\Theta(\rho)_{z}$. By using the previous cases with $x\preceq z$ in $\P_{Q}$,  again the diagram commutes in $\rep Q$. Therefore the diagram commutes in $\md_{sp}(\Bbbk \P)$.\\
  
 Finally, we will check the mesh relation. Let $\gamma \xrightarrow{\rho_{1}}\gamma_{1} \xrightarrow{\rho'_{1}}\gamma', \gamma \xrightarrow{\rho_{2}}\gamma_{2} \xrightarrow{\rho'_{2}}\gamma', \cdots,  \gamma \xrightarrow{\rho_{p}}\gamma_{p} \xrightarrow{\rho'_{p}}\gamma'$ be a family of compositions of sp-moves from $\gamma$ to $\gamma'$ that satisfies the conditions of the sp-mesh relation. According to \cite{schifflerserna}, it is enough to prove that the following diagrams 
 
 \[ \xymatrix @=1.05cm{M_{x}^{\gamma} \ar[r]^{\Theta(\rho_{1})_x} \ar[d]_{\Theta(\rho_{2})_{x}}  & M_{x}^{\gamma_{1}} \ar[d]^{\Theta(\rho'_{1})_{x} \text{  \hspace{0.5cm} {\large or,} \hspace{0.5cm}}}\\
  M_{x}^{\gamma_{2}} \ar[r]^{\Theta(\rho'_{2})_{x}} &   M_{y}^{\gamma'}}  
   \xymatrix  @R=0.23cm{ M_{x}^{\gamma} \ar[rr]^{\Theta(\rho_{1})_x} \ar[dd]_{\Theta(\rho_{3})_{x}} \ar[rd]_{\Theta(\rho_{2})_x}  && M_{x}^{\gamma_{1}} \ar[dd]^{\Theta(\rho'_{1})_{x}}\\
  &M_{x}^{\gamma_{2}} \ar[rd]^{\Theta(\rho'_{2})_x}&\\
  M_{x}^{\gamma_{3}} \ar[rr]^{\Theta(\rho'_{3})_{x}} &&   M_{y}^{\gamma'} }   \]
  
  commute for the cases $p=2$ or $p=3$, respectively. For $p=2$, we consider the condition where the intermediate sp-segment is neither a boundary edge nor a diagonal in $T$. Given that $\Theta(\rho_{i})_{x}=\Theta(\rho_{m_{i}} \dots \rho_{2_{i}}\rho_{1_{i}})_{x'}=f^{m_{i}}_{x'} \dots f^{2_{i}}_{x'} f^{1_{i}}_{x'}$ for any $x$ in $\P$, again the diagrams commute in $\rep Q$. Therefore, the diagrams commute in $\md_{sp}(\Bbbk \P)$.
\end{proof}

\begin{theorem} \label{main}
$\Theta$ is an equivalence of categories.
\end{theorem}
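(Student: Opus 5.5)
To show that $\Theta$ is an equivalence I would check that it is dense, faithful and full, following the scheme of \cite[Theorem 4.5]{schifflerserna}. The main idea is to factor $\Theta$ through the known equivalence $\varphi_T:\mathcal C/T\to\rep Q_T$ of (\ref{equivalence1}). Every socle-projective $\Bbbk\P$-module restricts, along $\P_Q\subseteq\P$, to a representation of $Q$. Concretely, $M^\gamma$ has the same vector spaces as $\varphi_T(\gamma)$, and its structure maps along arrows of $Q$ agree with those of $\varphi_T(\gamma)$ by Remark \ref{applications} and (\ref{equationfunctor}). So the restriction $\mathrm{res}:\md_{sp}(\Bbbk\P)\to\rep Q$ satisfies $\mathrm{res}\circ\Theta\cong\varphi_T|_{(\mathcal C/T)_F}$. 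This functor is faithful, because a module morphism is a family $(f_x)_{x\in\P}$ constrained by more commutativity conditions than a morphism of $Q$-representations. It is also injective on isoclasses of indecomposables, since these are determined by dimension vectors for $\DD_n$.

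\textbf{Faithfulness.} This follows at once. If $\Theta(f)=0$, then $\varphi_T(f)=\mathrm{res}\,\Theta(f)=0$, and since $\varphi_T$ is an equivalence, $f=0$ in $\mathcal C/T$. Because $(\mathcal C/T)_F$ is defined as a full subcategory (sp-moves modulo sp-mesh relations reproduce the Hom spaces of $\mathcal C/T$ between sp-arcs), $f=0$ there as well.

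\textbf{Fullness.} Take sp-arcs $\gamma,\gamma'$ and a morphism $g:M^\gamma\to M^{\gamma'}$ in $\md_{sp}(\Bbbk\P)$. Its restriction $\mathrm{res}(g)$ is a morphism $\varphi_T(\gamma)\to\varphi_T(\gamma')$ in $\rep Q$, so $\mathrm{res}(g)=\varphi_T(f)$ for some $f\in\Hom_{\mathcal C/T}(\gamma,\gamma')$. Then $\Theta(f)$ and $g$ have the same components at every point of $\P$, since $\P$ and $Q$ have the same vertices. Hence $\Theta(f)=g$.

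\textbf{Density.} This is the main obstacle, and I would handle it by counting. By Theorem \ref{sincereposetAD} and Proposition \ref{ind}, every indecomposable $M$ in $\md_{sp}(\Bbbk\P)$ is $T_S(L)$ for a sincere peak-subposet $S$ of type $\AA$ or $\DD$. Corollary \ref{suppoftypeD} shows that $\dm$-data of $M$ gives a dimension vector $d_M$ of the shape listed in Remark \ref{applications}. Next I must show that $\mathrm{res}(M)$ is an indecomposable representation of $Q$; I would verify this on the explicit sincere lists (Lemma \ref{1} and Appendix \ref{sincererepsD}), checking that the maps agree with the normal form of Remark \ref{applications}. It is socle-projective as a $Q$-representation, because its socle is supported on sinks of $Q$, which are exactly $\max\P$. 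By Proposition \ref{propstar}, $\mathrm{res}(M)\cong\varphi_T(\gamma)$ for a $\star$-arc $\gamma$. It remains to rule out that $\gamma$ is frozen by some $\alpha\in F$. If it were, then $e(\gamma,\tau_{s(\alpha)})\neq0$ and $e(\gamma,\tau_{t(\alpha)})<e(\gamma,\tau_z)$. Using the relation $s(\alpha)\prec t(\alpha)\prec z$ in $\P$, the composite ${_z}h_{s(\alpha)}$ would factor through a space of dimension too small. Following the case analysis $(C1)$--$(C6)$ in Lemma \ref{lemmafunctor}, this forces either a nonzero kernel violating Proposition \ref{conditions,a,b}(b) or a failure of condition (a). So $\gamma$ is an sp-arc, and $M\cong\Theta(\gamma)$ because both have the same dimension vector and the same maps along arrows of $Q$, which determine all maps ${_y}h_x$ by composition. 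The delicate point is the branch vertex $n-2$ with dimension $2$, where the three subspaces $U_1,U_2,U_3$ must be matched; I expect to treat it separately using Remark \ref{varphi'}.
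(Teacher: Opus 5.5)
\textbf{The gap is in density.} Your fullness and faithfulness arguments are the paper's: lift through $\varphi_T$ and use that $(\mathcal C/T)_F$ is full in $\mathcal C/T$. Making $\mathrm{res}$ explicit is cleaner than the paper's wording. The problem is in density, where you assert three things: that $\mathrm{res}(M)$ is socle-projective ``because its socle is supported on sinks of $Q$'', that $\mathrm{res}(M)$ is indecomposable, and that the $Q$-maps determine every ${_y}h_x$ by composition.

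The socle of $\mathrm{res}(M)$ at $v$ is $\bigcap_{\beta\colon v\to w}\ker\varphi_\beta$, taken over arrows of $Q$. Condition (b) of Proposition \ref{conditions,a,b} only kills $\bigcap\ker{_z}h_v$ over maximal $z\succ v$ in $\P$. An alien arrow can create a relation $v\prec z$ with $z\in\max\P$ that is absent from $\P_Q$, and then all three assertions can fail. Also, for $\alpha\colon s\to t$ in $F$, the map ${_t}h_s$ is never a composite of $Q$-maps.

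\textbf{Counterexample allowed by Definition \ref{defposetypeD}.} Take $n=5$ and $Q\colon 1\to2\leftarrow3\leftarrow4$, $3\to5$, which contains $D_2$ on $\{4,3,2,5\}$. Take $F=\{1\to4\}$. This is an alien set for the $\AA$-subquiver $1\to2\leftarrow3\leftarrow4$, since $4$ is an extreme source and $1,4\in\Supp I(2)$, and it restricts to $\varnothing$ on the other $\AA$-subquiver.
\begin{itemize}
\item $\P$ has Hasse quiver $1\to4\to3$, $3\to2$, $3\to5$. It contains $D_2$, so it is not of type $\AA$.
\item Let $M$ be the thin module with $M_1=M_4=M_3=M_5=\Bbbk$, $M_2=0$ and identity maps. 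It is indecomposable and socle-projective, with socle $S(5)$.
\item Yet $\mathrm{res}(M)=S(1)\oplus(\Bbbk_4\to\Bbbk_3\to\Bbbk_5)$, which is decomposable and not socle-projective.
\item $(1,0,1,1,1)$ has disconnected support in $Q$, so $M\not\cong\Theta(\gamma)$ for any $\gamma$.
\item Counting confirms this: $\md_{sp}(\Bbbk\P)$ has $14$ indecomposables, but only $12$ of the $16$ socle-projective indecomposable $Q$-representations are not frozen by $1\to4$.
\end{itemize}
So no check of the tables can close this step. The paper's own density argument has the same hole, since it rests on Corollary \ref{suppoftypeD}(a), which fails for this $M$. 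Theorem \ref{main} as stated therefore needs a restriction on $F$.

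\textbf{A repair under an extra hypothesis.} Your strategy goes through under the hypothesis (H): whenever $v\prec z$ in $\P$ with $z\in\max\P$, already $v\preceq z$ in $\P_Q$. This holds, for instance, if no alien arrow ends at a vertex lying below two sinks of $Q$, which excludes the source leaf $x$ in case (c) of Figure \ref{fig-generalquivers}.

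Under (H) you need no case-checking of the lists. Take $M,N$ in $\md_{sp}(\Bbbk\P)$, a $Q$-morphism $f\colon\mathrm{res}M\to\mathrm{res}N$, and $x\preceq y$ in $\P$. Every maximal $z\succeq y$ lies above $x$ and $y$ in $\P_Q$, so ${_z}h_x$ and ${_z}h_y$ are composites along $Q$-paths. Hence ${_z}h^N_y\,(f_y\,{_y}h^M_x-{_y}h^N_x f_x)=f_z\,{_z}h^M_x-{_z}h^N_x f_x=0$, and condition (b) for $N$ gives $f_y\,{_y}h^M_x={_y}h^N_x f_x$.

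So $\mathrm{res}$ is fully faithful on $\md_{sp}(\Bbbk\P)$, and the same kernel argument shows $\mathrm{res}(M)$ is socle-projective. It follows that $\mathrm{res}(M)$ is indecomposable and isomorphic to $\varphi_T(\gamma)$ for a $\star$-arc $\gamma$. Once $\gamma$ is shown to be an sp-arc, $\mathrm{res}(M)\cong\varphi_T(\gamma)=\mathrm{res}\,\Theta(\gamma)$ lifts to $M\cong\Theta(\gamma)$.

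\textbf{A smaller point in the frozen step.} When $e(\gamma,\tau_{s(\alpha)})=e(\gamma,\tau_{t(\alpha)})=1<2=e(\gamma,\tau_z)$, factoring through a line is no dimension contradiction. You need that the images of $M_{s(\alpha)}$ and $M_{t(\alpha)}$ in $M_z$ are distinct lines. This follows from the pairwise distinctness of $U_1,U_2,U_3$ in Remark \ref{applications}, because $s(\alpha)$ and $t(\alpha)$ are incomparable in $Q$.
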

\begin{proof}
Lemma \ref{lemmafunctor} implies that $\Theta$ is well defined. Now we will check $\Theta$ is dense, full, and faithful. To prove that $\Theta$ is dense, we suppose that $M$ is an indecomposable in $\md_{sp} (\Bbbk \P)$, by using Corollary \ref{suppoftypeD}, Proposition \ref{propstar}, and Identity \ref{equivalence1}, there exists an $\star$-arc $\gamma \in C/T$ such that $\Supp \gamma= \Supp M$ and $e(\gamma, \tau_{x})=d_{M}(x)$ for $x$ in $Q$. We need to show that $\gamma$ is an sp-arc. Suppose that $\alpha: x \rightarrow y$ is an alien arrow with $x,y$ in the support of $I(z)$ where $z$ is a sink vertex in $Q_{0}$ such that $e(\gamma, \tau_{x}) \neq 0$ and $e(\gamma, \tau_{y}) \neq 0$, for this situation, we have two possibilities:\\

(i) If $\tau_{x} \in T \setminus  (\tau_{n-2})_{\vartriangle}$ then $e(\gamma, \tau_{x}) \leq e(\gamma, \tau_{\Sigma})$ where $\tau_{\Sigma}$ is the peak-arc associated with $z$ in $Q$ , and ${_z}h_x^{\gamma}=\varphi_{\alpha'}$  satisfies Identity \ref{equationfunctor}. Given that the condition (a) in Proposition \ref{conditions,a,b} occurs, we have $e(\gamma, \tau_{y}) \neq 0$ and there are two different conditions for $\tau_{y}$. If $\tau_{y} \in T \setminus  (\tau_{n-2})_{\vartriangle}$ then $e(\gamma, \tau_{y}) \leq e(\gamma, \tau_{\Sigma})$,  if ${_z}h_y^{\gamma}=\varphi_{\alpha'}$ with $e(\gamma, \tau_{x}) \leq e(\gamma, \tau_{\Sigma})=2$, we have the situations $(C1), (C2), (C5)$, and $(C6)$ (see Identity \ref{cases});
\textcolor{black}{$(C2)$ and $(C6)$ imply that $\gamma$ is not frozen by $\alpha$}. If $(C5)$ occurs, $\tau_{y}$ belongs to  $(\tau_{n-2})_{\vartriangle}$, that it is a contradiction. If it is satisfied $(C2)$,${_z}h_y^{\gamma}=\left(\begin{smallmatrix}a \\ b \end{smallmatrix}\right)$, and ${_y}h_x^{\gamma}=\left(\begin{smallmatrix}c & d \end{smallmatrix}\right)$, then $\text{ker } ({_z} h_y \cdot {_y}h_x)= \Bbbk$, again, a contradiction. Now, if $\tau_{x} \in (\tau_{n-2})_{\vartriangle}$ and $(C6)$ occurs, $\gamma$ is not frozen again. \textcolor{black}{The other possibilities are not possible} because $(C2)$  does not satisfy Remark \ref{applications}, and  if $(C4)$ is satisfied, then $\alpha$ would not be an alien arrow. \\

(ii) If $\tau_{x}  \in (\tau_{n-2})_{\vartriangle}$, by the previous arguments if $\tau_{y} \in T \setminus  (\tau_{n-2})_{\vartriangle}$ the only possibilities for $\gamma$ not to be frozen by $\alpha$ to occur when $(C6)$ and $(C3)$ are satisfied. The case $\tau_{y}  \in (\tau_{n-2})_{\vartriangle}$ is not possible (see (iv) in Proposition \ref{propstar}).\\

To show that $\Theta$ is full. Suppose that $\Theta(\gamma) \xrightarrow[]{g} \Theta(\gamma')$ is a  nonzero morphism in $\md_{sp}(\Bbbk \P)$ with $g=(g_{x})_{x \in Q_{0}}$ where $g_{x}$ is a map from $\Theta(\gamma)_{x}$ to $\Theta(\gamma')_{x}$. For this case, we can take a morphism $g'=(g'_{x})_{x \in Q_{0}}$ from $\varphi_{T}(\gamma)$ to $\varphi_{T}(\gamma')$ such that $g'_{x}=g_{x}$ is a
morphism in $\rep Q$. So, for any arrow $\alpha: x \rightarrow y$ in $Q$, it is satisfied that $x \prec y$. \textcolor{black}{Here} $\varphi_{T}(\gamma)=(\varphi_{T}(\gamma)_{x}, \varphi_{\alpha}^{\gamma})$ has the same $\Bbbk$-vector spaces $\varphi_{T}(\gamma)_{x}=\Theta(\gamma)_{x}$ and the same maps ${_y}h_x^{\gamma}=\varphi_{\alpha}^{\gamma}$ in $\Theta(\gamma)$ with $\Theta(\gamma)=(\Theta(\gamma)_{x}, {_y}h_x^{\gamma}) \in \md_{sp}(\Bbbk \P)$. In particular if $\beta: s \rightarrow w$ is an alien arrow, then ${_w}h_s^{\gamma}$ is map $\varphi_{\beta'}^{\gamma}$   associated with  the representation $\varphi_{T}(\gamma) \in \rep Q$. By using the above conditions, the diagram

\[ \xymatrix @=1.05cm{ \Theta(\gamma)_{x} \ar[r]^{{_y}h_x^{\gamma}} \ar[d]_{g_{x}}  & \Theta(\gamma)_{y} \ar[d]^{g_{y}}\\
  \Theta(\gamma')_{x} \ar[r]^{{_y}h_x^{\gamma'}} &   \Theta(\gamma')_{y}}\] commutes in $\md_{sp} (\Bbbk \P)$, i.e., the following diagram also commutes
\[ \xymatrix @=1.05cm{ \varphi_{T}(\gamma)_{x} \ar[r]^{\varphi_{\alpha}^{\gamma}} \ar[d]_{g'_{x}}  & \varphi_{T}(\gamma)_{y} \ar[d]^{g'_{y}}\\
  \varphi_{T}(\gamma')_{x} \ar[r]^{\varphi_{\alpha}^{\gamma'}} &   \varphi_{T}(\gamma')_{y}}\] in $\rep Q_{T}$. Given that,  $\varphi_{T}$ is an equivalence of categories,  there exists an elementary move in $\rho$ in $C/T$ such that $\varphi_{T}(\rho)= g'$. Since $\gamma$ and $\gamma'$ are \textcolor{black}{sp-arcs}, $\rho$ is a morphism in $C/T$\textcolor{black}{, and since $(\mathcal C/T)_F$ is by definition (Subsection \ref{categorysp}) a \emph{full} subcategory of $\mathcal C/T$, $\rho$ is a morphism of $(\mathcal C/T)_F$}. Therefore $\Theta(\rho)=g$.

In order to prove that $\Theta$ is faithful. Let $\Theta(\gamma) \xrightarrow[]{f} \Theta(\gamma')$ and $\Theta(\gamma) \xrightarrow[]{g} \Theta (\gamma')$ be two morphisms in $\md_{sp} (\Bbbk \P)$ such  that $f=g$. Since $\Theta$ is full, there are sp-moves $\rho$ and $\rho'$ in $C/T$ that satisfy $\Theta(\rho)=\varphi_{T}(\rho)=f$ and $\Theta(\rho')=\varphi_{T}(\rho')=g$. As $\varphi_{T}$ is an categorical equivalence, we have $\Theta(\rho)=\varphi_{T}(\rho)=f=g=\varphi_{T}(\rho')=\Theta(\rho')$. Thus, $\rho=\rho'$. 
\end{proof}

\begin{example}
 Let  $\P$ be a poset of type $\mathbb{D}$  described by the following Hasse diagram.

\begin{center}
\begin{tikzcd}[row sep= small, column sep = small]
& _6\arrow[d] \arrow[ddl]\\
 & _5\arrow[d]\arrow[dr] &\\
_1\arrow[ddr]& _4\arrow[d]& _7\\
& _3  \arrow[d]\\
& _2\\
\end{tikzcd}
\end{center}

 $\P$ has associated the quiver $Q^{F}$ where $Q$ is a quiver in Example \ref{examplequiverD}  and $F$ is the set of alien arrows given by Example \ref{examplealienarrow}. Figure  \ref{A-R} shows the Auslander-Reiten quiver of the category of finitely generated socle-projective modules $\md_{sp} (\Bbbk \P)$ of $\P$.\\

\begin{figure}[h!]
\begin{adjustbox}{scale=0.3,center}

\begin{tikzcd}[ampersand replacement=\&, row sep= normal, column sep = normal]
\&\smrd{ & \\ &\\ \colorb{1}& & \\ & \colorb{2}& \text{ }& &\\}   \& \& \& \& \& \&\\
\smrud{  \text{ }&\colorb{2}& \text{ }}\&\&\smrd{ & &\\ & & \\ \colorb{1}&\colorb{3} & \text{ }\\ &\colorb{2}& \\ & & } \& \& \& \& \&\\
 \& \smrud{  \\ \colorb{3}\\ \colorb{2} \\ \\} \& \& \smrd{ &\colorb{4}\\  \colorb{1} &  \colorb{3}\\ &  \colorb{2} }\& \&  \smrd{ \colorb{5}&\\ & \colorb{7}} \& \& \sm{ &\colorb{6}&\\ & \colorb{5} \\ & \colorb{4}\\ \colorb{1}& \colorb{3}\\  & \colorb{2} }\\
 \& \& \smrud{\colorb{4}\\ \colorb{3}\\ \colorb{2}} \& \&\smr{&\colorb{5}&\\ &\colorb{4} & \colorb{7}\\ \colorb{1}&\colorb{3}&\\ &\colorb{2}&} \smrud{} \& \smr{&\colorb{6}\\&\colorb{5}&\\ &\colorb{4} & \colorb{7}\\ \colorb{1}&\colorb{3}&\\ &\colorb{2}&} \&  \smrud{&\colorb{6}\\&\colorb{5} \colorb{5} &\\ &\colorb{4} & \colorb{7}\\  \colorb{1}&\colorb{3}&\\ &\colorb{2}&}\& \\
 \& \&  \&  \smrud{\colorb{5}& \\ \colorb{4}& \colorb{7}\\ \colorb{3}&\\ \colorb{2}&} \& \& \smru{&\colorb{5} \\ &\colorb{4} \\ \colorb{1}&\colorb{3}\\ &\colorb{2}}\& \& \sm{\colorb{6}& \\ \colorb{5} & \\ & \colorb{7}} \\
 \& \& \smru{\colorb{7}}  \& \& \smru{\colorb{5} \\ \colorb{4}\\ \colorb{3}\\ \colorb{2}} \&  \& \& \\
\end{tikzcd}
\end{adjustbox}
\caption{Auslander-Reiter quiver of $\md_{sp}(\Bbbk \P)$} \label{A-R}
\end{figure}

Now, given the triangulation associated  with the poset $\P$ defined in Example \ref{examplequiverD}, the Auslander-Reiten quiver  of the category of $\rep Q$ is described in Figure \ref{A-R model}. In particular, the red arrows  are the set of sp-arcs of the category $(C/T)_{F}$.
\begin{figure}[h!]
\[
\begin{adjustbox}{max totalsize={0.9\textwidth}{0.85\textheight},center}
\begin{turn}{90}
\xymatrix@R=10pt@C=10pt{ &\xy/r1pc/:{\xypolygon7"A"{~<{}~>{-}{}}}*+{\cdot}
\POS"A5" \ar@{-} @/_0.4ex/ @[red] "A7"
\endxy \ar[rd] &&\xy/r1pc/:{\xypolygon7"A"{~<{}~>{-}{}}}*+{\cdot}
\POS"A6" \ar@{-} @/_0.4ex/ "A1"
\endxy \ar[rd]
&& \xy/r1pc/:{\xypolygon7"A"{~<{}~>{-}{}}}*+{\cdot}
\POS"A7" \ar@{-} @/_0.4ex/  "A2"
\endxy \ar[rd]
&&\xy/r1pc/:{\xypolygon7"A"{~<{}~>{-}{}}}*+{\cdot}
\POS"A1" \ar@{-} @/_0.4ex/ @[red]  "A3"
\endxy \ar[rd]
&& \xy/r1pc/:{\xypolygon7"A"{~<{}~>{-}{}}}*+{\cdot}
\POS"A2" \ar@{-} @/_0.4ex/ "A4"
\endxy \ar[rd] 
&& \xy/r1pc/:{\xypolygon7"A"{~<{}~>{-}{}}}*+{\cdot}
\POS"A3" \ar@{-} @/_0.4ex/  "A5"
\endxy && &\\
\xy/r1pc/:{\xypolygon7"A"{~<{}~>{-}{}}}*+{\cdot}
\POS"A4" \ar@{-} @/_1ex/ @[red] "A7"
\endxy \ar[rd] \ar[ru] 
&& \xy/r1pc/:{\xypolygon7"A"{~<{}~>{-}{}}}*+{\cdot}
\POS"A5" \ar@{-} @/_1ex/ @[red]  "A1"
\endxy \ar[rd] \ar[ru] 
&& \xy/r1pc/:{\xypolygon7"A"{~<{}~>{-}{}}}*+{\cdot}
\POS"A6" \ar@{-} @/_1ex/   "A2"
\endxy
\ar[rd] \ar[ru]
&& \xy/r1pc/:{\xypolygon7"A"{~<{}~>{-}{}}}*+{\cdot}
\POS"A7" \ar@{-} @/_1ex/   "A3"
\endxy
\ar[rd] \ar[ru] 
&& \xy/r1pc/:{\xypolygon7"A"{~<{}~>{-}{}}}*+{\cdot}
\POS"A1" \ar@{-} @/_1ex/   "A4"
\endxy
\ar[rd] \ar[ru]
&&\xy/r1pc/:{\xypolygon7"A"{~<{}~>{-}{}}}*+{\cdot}
\POS"A2" \ar@{-} @/_1ex/ @[red]  "A5"
\endxy
\ar[rd] \ar[ru] && &&
\\
& \xy/r1pc/:{\xypolygon7"A"{~<{}~>{-}{}}}*+{\cdot}
\POS"A4" \ar@{-} @/_1.8ex/ @[red] "A1"
\endxy
\ar[rd] \ar[ru]
&& \xy/r1pc/:{\xypolygon7"A"{~<{}~>{-}{}}}*+{\cdot}
\POS"A5" \ar@{-} @/_1.8ex/ @[red] "A2"
\endxy
\ar[rd] \ar[ru]
&& \xy/r1pc/:{\xypolygon7"A"{~<{}~>{-}{}}}*+{\cdot}
\POS"A6" \ar@{-} @/_1.8ex/ "A3"
\endxy
\ar[rd] \ar[ru]
&&
\xy/r1pc/:{\xypolygon7"A"{~<{}~>{-}{}}}*+{\cdot}
\POS"A7" \ar@{-} @/_1.8ex/ "A4"
\endxy
\ar[rd] \ar[ru]
&&
\xy/r1pc/:{\xypolygon7"A"{~<{}~>{-}{}}}*+{\cdot}
\POS"A1" \ar@{-} @/_1.8ex/ @[red]"A5"
\endxy
\ar[rd] \ar[ru]
&& \xy/r1pc/:{\xypolygon7"A"{~<{}~>{-}{}}}*+{\cdot}
\POS"A2" \ar@{-} @/_1.8ex/ "A6"
\endxy
\ar[rd] && &
\\
&& \xy/r1pc/:{\xypolygon7"A"{~<{}~>{-}{}}}*+{\cdot}
\POS"A4" \ar@{-} @/_2.5ex/ @[red] "A2"
\endxy
\ar[rd] \ar[ru] 
&&\xy/r1pc/:{\xypolygon7"A"{~<{}~>{-}{}}}*+{\cdot}
\POS"A5" \ar@{-} @/_2.5ex/ @[red] "A3"
\endxy
\ar[rd] \ar[ru]
&&\xy/r1pc/:{\xypolygon7"A"{~<{}~>{-}{}}}*+{\cdot}
\POS"A6" \ar@{-} @/_2.5ex/ "A4"
\endxy
\ar[rd] \ar[ru]
&&\xy/r1pc/:{\xypolygon7"A"{~<{}~>{-}{}}}*+{\cdot}
\POS"A7" \ar@{-} @/_2.5ex/ "A5"
\endxy
\ar[rd] \ar[ru]
&&\xy/r1pc/:{\xypolygon7"A"{~<{}~>{-}{}}}*+{\cdot}
\POS"A1" \ar@{-} @/_2.5ex/ "A6"
\endxy
\ar[rd] \ar[ru]
&&\xy/r1pc/:{\xypolygon7"A"{~<{}~>{-}{}}}*+{\cdot}
\POS"A2" \ar@{-} @/_2.5ex/ "A7"
\endxy
\ar[rd] 
&&\\
&&&\xy/r1pc/:{\xypolygon7"A"{~<{}~>{-}{}}}*+{\cdot}
\POS"A4" \ar@{-} @/_3ex/ @[red] "A3"
\endxy
\ar[rd] \ar[ru] \ar[r]
&\xy/r1pc/:{\xypolygon7"A"{~<{}~>{-}{}}}*+{\cdot}
\POS"A4" \ar|-(0.8){\SelectTips{cm}{}\object@{+}}@{-}    "A0"
\endxy \ar[r]
&\xy/r1pc/:{\xypolygon7"A"{~<{}~>{-}{}}}*+{\cdot}
\POS"A5" \ar@{-} @/_3ex/  "A4"
\endxy
\ar[rd] \ar[ru] \ar[r]
&\xy/r1pc/:{\xypolygon7"A"{~<{}~>{-}{}}}*+{\cdot}
\POS"A5" \ar@{-} @[red]  "A0"
\endxy  \ar[r]
&\xy/r1pc/:{\xypolygon7"A"{~<{}~>{-}{}}}*+{\cdot}
\POS"A6" \ar@{-} @/_3ex/ "A5"
\endxy
\ar[rd] \ar[ru] \ar[r]
&\xy/r1pc/:{\xypolygon7"A"{~<{}~>{-}{}}}*+{\cdot}
\POS"A6" \ar|-(0.8){\SelectTips{cm}{}\object@{+}}@{-}   "A0"
\endxy \ar[r] 
&\xy/r1pc/:{\xypolygon7"A"{~<{}~>{-}{}}}*+{\cdot}
\POS"A7" \ar@{-} @/_3ex/ "A6"
\endxy
\ar[rd] \ar[ru] \ar[r]
&\xy/r1pc/:{\xypolygon7"A"{~<{}~>{-}{}}}*+{\cdot}
\POS"A7" \ar@{-} "A0"
\endxy \ar[r] 
&\xy/r1pc/:{\xypolygon7"A"{~<{}~>{-}{}}}*+{\cdot}
\POS"A1" \ar@{-} @/_3ex/ "A7"
\endxy
\ar[rd] \ar[ru] \ar[r]
&\xy/r1pc/:{\xypolygon7"A"{~<{}~>{-}{}}}*+{\cdot}
\POS"A1" \ar|-(0.8){\SelectTips{cm}{}\object@{+}}@{-}   @[red] "A0"
\endxy \ar[r]
&\xy/r1pc/:{\xypolygon7"A"{~<{}~>{-}{}}}*+{\cdot}
\POS"A2" \ar@{-} @/_3ex/ "A1"
\endxy \ar[rd]&\\
&& \xy/r1pc/:{\xypolygon7"A"{~<{}~>{-}{}}}*+{\cdot}
\POS"A3" \ar|-(0.8){\SelectTips{cm}{}\object@{+}}@{-}   @[red] "A0"
\endxy \ar[ru] 
&&\xy/r1pc/:{\xypolygon7"A"{~<{}~>{-}{}}}*+{\cdot}
\POS"A4" \ar@{-}  @[red]  "A0"
\endxy \ar[ru] 
&& \xy/r1pc/:{\xypolygon7"A"{~<{}~>{-}{}}}*+{\cdot}
\POS"A5" \ar|-(0.8){\SelectTips{cm}{}\object@{+}}@{-}  @[red] "A0"
\endxy \ar[ru] 
&& \xy/r1pc/:{\xypolygon7"A"{~<{}~>{-}{}}}*+{\cdot}
\POS"A6" \ar@{-}   "A0"
\endxy \ar[ru] 
&& \xy/r1pc/:{\xypolygon7"A"{~<{}~>{-}{}}}*+{\cdot}
\POS"A7" \ar|-(0.8){\SelectTips{cm}{}\object@{+}}@{-}   "A0"
\endxy \ar[ru] 
&& \xy/r1pc/:{\xypolygon7"A"{~<{}~>{-}{}}}*+{\cdot}
\POS"A1" \ar@{-}   "A0"
\endxy \ar[ru] 
&& \xy/r1pc/:{\xypolygon7"A"{~<{}~>{-}{}}}*+{\cdot}
\POS"A2" \ar|-(0.8){\SelectTips{cm}{}\object@{+}}@{-}   "A0"
\endxy
}
\end{turn}
\end{adjustbox}
\]
\caption{Red arrow are the set of sp-arcs associated  with Example \ref{examplealienarrow}.} \label{A-R model}
\end{figure}
\end{example}

According to \cite{schifflerserna}, let $\P$ be a poset of type $\mathbb{A}$ whose $Q^F$ is the quiver described in Proposition~\ref{0}. Let $\mathcal{A}=\mathcal{A}(x,Q)$ be the cluster algebra associated with the initial seed $(x,Q)$, where $x=\{x_1,\dots,x_n\}$, and let $\mathcal{A}(\P)$ be the subalgebra of $\mathcal{A}$ generated by the cluster variables $x_\gamma$ such that $\gamma$ is an sp-diagonal in the category $\mathcal{C}_{(T,F)}$ (see Subsection~\ref{subsectionposetsof typeA}). Theorem~5.2 of \cite{schifflerserna} provides a partial answer to the question of when $\mathcal{A}(\P)=\mathcal{A}$.\\

Using the same ideas as above, suppose that $\P$ is a poset of type $\mathbb{D}$ such that $Q^F$ is the quiver defined in Definition~\ref{defposetypeD}, where $Q$ is an admissible quiver of type $\mathbb{D}$ as shown in Figure~\ref{fig-generalquivers}. We consider the cluster algebra $\mathcal{A}=\mathcal{A}(x,Q)$ and the subalgebra $\mathcal{A}(\P)$ generated by the cluster variables $x_\gamma$ such that $\gamma$ is an sp-arc in $(\mathcal{C}/T)_F$. We obtain the following property.

\begin{corollary}\textcolor{black}{\label{clusterprop}}
    Let $\P$ be a poset of type $\mathbb{D}$ whose $Q^{\varnothing}$ is the quiver associated with $\P$ as in Definition~\ref{defposetypeD}, then $\mathcal{A}(\P)=\mathcal{A}$.
\end{corollary}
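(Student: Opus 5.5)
The plan is to show that when $F=\varnothing$ the sp-arcs already contain a full cluster, and then to finish with a generation argument patterned on \cite[Theorem 5.2]{schifflerserna}.

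\textbf{Step 1: reduce sp-arcs to $\star$-arcs.} With $F=\varnothing$ no arc is frozen, so by Definition \ref{sparc} the sp-arcs are exactly the $\star$-arcs. By Proposition \ref{propstar}, these are the arcs $\gamma\notin T$ for which $\varphi_T(\gamma)$ is an indecomposable socle-projective representation of $Q_T=Q$. Note that $\P=\P_Q$ in this case.

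\textbf{Step 2: reduce to the initial cluster.} Since $\mathcal A(\P)\subseteq\mathcal A$ trivially, it suffices to show that every cluster variable of $\mathcal A$ lies in $\mathcal A(\P)$. A cluster algebra is generated by the cluster variables of any single cluster together with those obtained from it by mutation. So the plan is to show that the initial variables $x_1,\dots,x_n$ (the arcs of $T$) lie in $\mathcal A(\P)$. Then $\mathcal A(\P)$ contains the initial seed, hence all variables reachable from it, which gives $\mathcal A(\P)=\mathcal A$.

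\textbf{Step 3: recover each $x_i$ by an exchange relation.} For each $\tau_i\in T$, take the arc $\tau_i'$ obtained by flipping $\tau_i$ in $T$. The exchange relation is
$x_{\tau_i}x_{\tau_i'}=\prod x_{\text{one side}}+\prod x_{\text{other side}}$,
where the right side involves arcs of $T$ and boundary edges (boundary edges are equal to $1$). The arc $\tau_i'$ crosses only $\tau_i$, so $\varphi_T(\tau_i')=S(i)$. This is socle-projective exactly when $i$ is a sink of $Q$. Hence the $x_{\tau_z}$ for peak-arcs $\tau_z$ can be handled first, following the type $\AA$ induction.

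The plan is to order the vertices by decreasing distance from the sinks and proceed by induction. Fix a fan $\Sigma$ and an arc $\tau\in\Sigma$, and suppose all arcs above $\tau$ in $\Sigma$ are already known to be generated. Choose an arc $\gamma$ whose crossings with $T$ are exactly the arcs of $\Sigma$ between $\tau$ and $\tau_\Sigma$. Then $\varphi_T(\gamma)$ is the interval module $I$ on a path ending at the sink, and such a module has projective socle, so $\gamma$ is a $\star$-arc. Its Ptolemy (exchange) relation with $\tau$, in the quadrilateral cut out by the fan, expresses $x_\tau$ as a Laurent polynomial. To get a genuine polynomial, invert the relation instead, writing it as $x_\tau x_{\gamma'}=x_\gamma x_{(\cdot)}+x_{(\cdot)}$ with $\gamma'$ an sp-arc. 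Alternatively, one can use the result from \cite{schifflerserna} that in type $\AA$ (with no alien arrows) the subalgebra is the whole algebra, applied separately to the two $\AA$-subquivers of $Q$. Those are exactly the type-$\AA$ situation, and their sp-diagonals are sp-arcs here (this compatibility must be checked).

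\textbf{Main obstacle.} The hard part is the arcs near the puncture, i.e.\ $\tau_{n-2}$, $\tau_{n-1}$, $\tau_n$ and the tagged loops. There, exchange relations involve $x_{M_{a,a}^{1}}x_{M_{a,a}^{-1}}$ products, and the two $\AA$-subquivers overlap. The first thing to try is to verify directly, case by case for the three admissible shapes (a), (b), (c) of Figure \ref{fig-generalquivers}, that the relevant loops and the arc $M_{a,b}$ with $|\delta_{a,b}|=n$ are $\star$-arcs using Definition \ref{definitionarc}(b). Then feed them into the $p=3$ mesh-type exchange relation to recover $x_{\tau_{n-1}}$, $x_{\tau_n}$ and $x_{\tau_{n-2}}$.
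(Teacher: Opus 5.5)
Step~1 is fine, but Step~2 is false as stated, and the rest of the plan depends on it. $\mathcal A(\P)$ is only a subring, while a mutation $x_k'=(m_1+m_2)/x_k$ divides by $x_k$. A subring containing the initial cluster need not contain anything else: for example $\mathbb Z[x_1,\dots,x_n]\subsetneq\mathcal A$. So ``hence all variables reachable from it'' does not follow. What is true (Berenstein--Fomin--Zelevinsky) is that, for an acyclic seed, $\mathcal A$ is generated by the cluster together with its $n$ one-step mutations. For the initial seed these neighbours are the $x_{S(i)}$, which are not sp-variables unless $i$ is a sink. So even a successful Step~3 would leave you short.

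The paper's proof avoids arc-by-arc work entirely. It identifies $(\mathcal C/T)_\varnothing$ with $\md_{sp}(\Bbbk Q)$ through $\Theta$ (Theorem~\ref{main} with $F=\varnothing$). It then matches the indecomposable projectives $P(i)$ with $\Delta$-arcs; these are socle-projective, hence already sp-arcs, and they form an acyclic seed. Finally it applies \cite[Lemma 5.1]{schifflerserna}, which is stated for arbitrary tree quivers, so the puncture never needs separate treatment.

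Step~3 has its own gap. An exchange relation with $x_\tau$ on the left never puts $x_\tau$ into a subring; it only exhibits $x_\tau$ as a quotient. This applies both to $x_{\tau_i}x_{\tau_i'}=\cdots$ and to your ``inverted'' form $x_\tau x_{\gamma'}=x_\gamma x_{(\cdot)}+x_{(\cdot)}$, and it already fails for the peak-arcs. You need relations in which $x_\tau$ is a whole monomial on the right and its partner is a boundary edge, i.e.\ $x_\tau=x_\gamma x_{\gamma'}-x_\rho x_\kappa$ with $\gamma,\gamma',\rho,\kappa$ sp-arcs. For instance, for $Q=D_1$ with arrows $a,b,c\to z$, one checks that $x_a=x_{P(z)}x_N-x_{P(b)}x_{P(c)}$, where $N$ is the indecomposable supported on $\{b,c,z\}$.

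The fallback to type $\AA$ is not available in the form you use. The paper itself calls \cite[Theorem 5.2]{schifflerserna} only a partial answer. Indeed, for $Q\colon 1\to 2$ the only sp-diagonals are those of $P(1)$ and $P(2)$, which do not cross, so $\mathcal A(\P)$ is a polynomial ring on one cluster and differs from $\mathcal A$. Besides, an $\AA$-subquiver sees $\mathcal A$ only after freezing the deleted leaf, which changes the exchange relations. Finally, the genuinely type-$\DD$ part, the arcs at the puncture, is left as ``the first thing to try''. As it stands, the proposal does not yet contain an argument for the statement.
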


\begin{proof}
    Using \cite[Lemma 5.1]{schifflerserna} and the same arguments as in the proof of \cite[Theorem 5.2]{schifflerserna}\textcolor{black}{: $(\mathcal C/T)_\varnothing$ is identified with $\md_{sp}(\Bbbk Q)$ via $\Theta$ (Theorem \ref{main} with $F=\varnothing$), the indecomposable projectives correspond to $\Delta$-arcs, and $Q$ is a tree quiver, so \cite[Lemma 5.1]{schifflerserna} --- stated there for an arbitrary tree quiver, not only for type $\AA$ --- applies directly}.
\end{proof}

\clearpage
\appendix
\section{Sincere Posets of Type $\DD$}\label{sincereposetsD} 
 The following list comprises sincere posets of type $\DD$, derived from the lists of sincere posets of finite representation type provided in
 \cite[Theorem 1]{kleiner75}, \cite[Table 3.2]{justina}, \cite[Table 2.2]{justina1}, and \cite[Table 2.2]{justina2}.\\

\begin{longtable}{|c|}
\hline\begin{adjustbox}{max width=0.95\textwidth}
         
$\mathcal{F}_1^{(r)}=$\begin{tikzcd}[row sep= small, column sep = 0.4, scale=0.8]
 & _1 \arrow[d]\arrow[dl]\arrow[dr]&&_2\arrow[dl]\arrow[dr] & &_3\arrow[dl]\arrow[dr]&&\cdots &&\ _{r-2}\arrow[dl]\arrow[dr] \\
\star & \star & \star &&\star &&\star&\cdots &\star&&\star\\
\end{tikzcd} $_{r \geq 3}$
\end{adjustbox}
\\
\hline
\begin{adjustbox}{max width=0.95\textwidth}
$\mathcal{F}_2^{(r)}=$\begin{tikzcd}[row sep= small, column sep = 0.4]
& & _2 \arrow[dl]\arrow[dr]&&_3\arrow[dl]\arrow[dr] & &_4\arrow[dl]\arrow[dr]&&\cdots &&\ _{r-1}\arrow[dl]\arrow[dr] \\
&\ _1\arrow[dl]\arrow[dr]&  & \star &&\star &&\star&\cdots &\star&&\star\\
\star && \star\\ 
\end{tikzcd}  $_{r \geq 3}$
\end{adjustbox}
\\
\hline

\begin{adjustbox}{max width=0.95\textwidth}
$\mathcal{F}_{3,k}^{(r)}=$\begin{tikzcd}[row sep= small, column sep = 0.4]
&&&&&&&&&& _{k+1 }\arrow[dl]\arrow[dr]&& _{k+2} \arrow[dl]\arrow[dr]&&\cdots && _{r-1}\arrow[dl]\arrow[dr]\\
 & _1 \arrow[d]\arrow[dl]\arrow[dr]&&_2\arrow[dl]\arrow[dr]& &
 \cdots && _{k-1}\arrow[dr] \arrow[dl]&& _{k}\arrow[dl]&&\star && \star &\cdots &\star && \star \\
\star & \star & \star &&\star &\cdots &\star &&\star \\
\end{tikzcd} $_{r \geq 4\text{, }2 \leq k \leq r-2}$
\end{adjustbox}
\\
\hline

\begin{adjustbox}{max width=0.95\textwidth}
$\mathcal{F}_4^{(r)}=$\begin{tikzcd}[row sep= small, column sep = 0.4]
 & _1 \arrow[d]\arrow[dl]\arrow[dr]&&_2\arrow[dl]\arrow[dr] & &_3\arrow[dl]\arrow[dr]&&\cdots &&\ _{r-2}\arrow[dl]\arrow[dr]& _{r-1}\arrow[d] \\
\star & \star & \star &&\star &&\star&\cdots &\star&&\star\\
\end{tikzcd} $_{r \geq 3}$
\end{adjustbox}
\\
\hline
\begin{adjustbox}{max width=0.95\textwidth}
$\mathcal{F}_5^{(r)}=$\begin{tikzcd}[row sep= small, column sep = 0.4]
& & _2 \arrow[dl]\arrow[dr]&&_3\arrow[dl]\arrow[dr] & &_4\arrow[dl]\arrow[dr]&&\cdots &&\ _{r-1}\arrow[dl]\arrow[dr]& _r\arrow[d] \\
&\ _1\arrow[dl]\arrow[dr]&  & \star &&\star &&\star&\cdots &\star&&\star\\
\star && \star\\ 
\end{tikzcd} $_{r \geq 3}$
\end{adjustbox}
\\
\hline

\begin{adjustbox}{max width=0.95\textwidth}
$\mathcal{F}_{6,k}^{(r)}=$\begin{tikzcd}[row sep= small, column sep = 0.4]
&&&&&&&&&& _{k+1} \arrow[dl]\arrow[dr]&& _{k+2} \arrow[dl]\arrow[dr]&&\cdots && _{r-1}\arrow[dl]\arrow[dr] & _r\arrow[d]\\
 & _1 \arrow[d]\arrow[dl]\arrow[dr]&&_2\arrow[dl]\arrow[dr]& &
 \cdots && _{k-1}\arrow[dr] \arrow[dl]&& _{k}\arrow[dl]&&\star && \star &\cdots &\star && \star \\
\star & \star & \star &&\star &\cdots &\star &&\star \\
\end{tikzcd} $_{r \geq 4\text{, }2 \leq k \leq r-2}$
\end{adjustbox}
\\
\hline

\begin{adjustbox}{max width=0.95\textwidth}
$\mathcal{F}_7^{(r)}=$\begin{tikzcd}[row sep= small, column sep =0.4]
&&&&&&&&&& _{r-1}\arrow[d]\\
 & _1 \arrow[d]\arrow[dl]\arrow[dr]&&_2\arrow[dl]\arrow[dr] & &_3\arrow[dl]\arrow[dr]&&\cdots &&\ _{r-2}\arrow[dl]\arrow[dr]& _{r}\arrow[d] \\
\star & \star & \star &&\star &&\star&\cdots &\star&&\star\\
\end{tikzcd} $_{r \geq 3}$
\end{adjustbox}
\\
\hline
\begin{adjustbox}{max width=0.95\textwidth}
$\mathcal{F}_9^{(r)}=$\begin{tikzcd}[row sep= small, column sep = 0.4]
& _1\arrow[d]\\ 
 & _2 \arrow[dl]\arrow[dr]&&_3\arrow[dl]\arrow[dr] & &_4\arrow[dl]\arrow[dr]&&\cdots &&\ _{r}\arrow[dl]\arrow[dr] \\
\star & & \star &&\star &&\star&\cdots &\star&&\star\\
\end{tikzcd} $_{r \geq 2}$
\end{adjustbox}
\\
\hline
\begin{adjustbox}{max width=0.95\textwidth}
$\mathcal{F}_{11,k}^{(r)}=$\begin{tikzcd}[row sep= small, column sep = 0.4]
&_1\arrow[d]&&&&&&&&& _{k+1} \arrow[dl]\arrow[dr]&& _{k+2} \arrow[dl]\arrow[dr]&&\cdots && _{r+1}\arrow[dl]\arrow[dr]\\
 & _2 \arrow[dl]\arrow[dr]&&_3\arrow[dl]\arrow[dr]& &
 \cdots && _{k-1}\arrow[dr] \arrow[dl]&& _{k}\arrow[dl]&&\star && \star &\cdots &\star && \star \\
\star &  & \star &&\star &\cdots &\star &&\star \\
\end{tikzcd} $_{r \geq 3\text{, }3 \leq k \leq r,}$
\end{adjustbox}
\\
\hline
\begin{adjustbox}{max width=0.95\textwidth}
$\mathcal{F}_{12}^{(r)}=$\begin{tikzcd}[row sep= small, column sep = 0.4]
& _1\arrow[d]\\ 
 & _2 \arrow[dl]\arrow[dr]&&_3\arrow[dl]\arrow[dr] & &_4\arrow[dl]\arrow[dr]&&\cdots &&\ _{r}\arrow[dl]\arrow[dr]& _{r+1}\arrow[d] \\
\star & & \star &&\star &&\star&\cdots &\star&&\star\\
\end{tikzcd} $_{r \geq 2}$
\end{adjustbox}
\\
\hline\begin{adjustbox}{max width=0.95\textwidth}
 
$\mathcal{F}_{14}^{(r)}=$\begin{tikzcd}[row sep= small, column sep = 0.4]
 & _1 \arrow[dl]\arrow[dr]&&_2\arrow[dl]\arrow[dr] & &_3\arrow[dl]\arrow[dr]&&\cdots &&\ _{r-1}\arrow[dl]\arrow[dr] & _{r}\arrow[d]& _{r+1}\arrow[dl] \\
\star &  & \star &&\star &&\star&\cdots &\star&&\star\\
\end{tikzcd} $_{r \geq 1}$
\end{adjustbox}
\\
\hline
\begin{adjustbox}{max width=0.95\textwidth}
$\mathcal{F}_{15,k}^{(r)}=$\begin{tikzcd}[row sep= small, column sep = 0.4]
&_1\arrow[d]&&&&&&&&& _{k+1} \arrow[dl]\arrow[dr]&& _{k+2} \arrow[dl]\arrow[dr]&&\cdots && _{r+1}\arrow[dl]\arrow[dr]& _{r+2}\arrow[d]\\
 & _2 \arrow[dl]\arrow[dr]&&_3\arrow[dl]\arrow[dr]& &
 \cdots && _{k-1}\arrow[dr] \arrow[dl]&& _{k}\arrow[dl]&&\star && \star &\cdots &\star && \star \\
\star &  & \star &&\star &\cdots &\star &&\star \\
\end{tikzcd} $_{r \geq 3\text{, }3 \leq k \leq r}$
\end{adjustbox}
\\
\hline
\begin{adjustbox}{max width=0.95\textwidth}
$\mathcal{F}_{16}^{(r)}=$\begin{tikzcd}[row sep= small, column sep = 0.4]
_1 \arrow[d]& _2 \arrow[dl]\arrow[dr]&&_3\arrow[dl]\arrow[dr] & &_4\arrow[dl]\arrow[dr]&&\cdots &&\ _{r}\arrow[dl]\arrow[dr] & _{r+1}\arrow[d]& _{r+2}\arrow[dl] \\
\star &  & \star &&\star &&\star&\cdots &\star&&\star\\
\end{tikzcd} $_{r \geq 2}$
\end{adjustbox}
\\
\hline\begin{adjustbox}{max width=0.95\textwidth}
 
$\mathcal{F}_{17,k}^{(r)}=$\begin{tikzcd}[row sep= small, column sep = 0.4]
&&&&&&&&&&& _{k+1} \arrow[dl]\arrow[dr]&& _{k+2} \arrow[dl]\arrow[dr]&&\cdots && _{r+2}\arrow[dl]\arrow[dr]\\
 _1\arrow[dr]& _2\arrow[d] &_3\arrow[dl]\arrow[dr]& &_4\arrow[dl]\arrow[dr]
 &&\cdots && _{k-1}\arrow[dr] \arrow[dl]&& _{k}\arrow[dl]&&\star && \star &\cdots &\star && \star \\
&\star &  & \star &&\star &\cdots &\star &&\star \\
\end{tikzcd} $_{r \geq 2\text{, }3 \leq k \leq r+1}$
\end{adjustbox}
\\
\hline
\begin{adjustbox}{max width=0.95\textwidth}
$\mathcal{F}_{18}^{(r)}=$\begin{tikzcd}[row sep= small, column sep = 0.4]
_1\arrow[d]\\
_2 \arrow[d]& _3 \arrow[dl]\arrow[dr]&&_4\arrow[dl]\arrow[dr] & &_5\arrow[dl]\arrow[dr]&&\cdots &&\ _{r+1}\arrow[dl]\arrow[dr] & _{r+2}\arrow[d]& _{r+3}\arrow[dl] \\
\star &  & \star &&\star &&\star&\cdots &\star&&\star\\
\end{tikzcd} $_{r \geq 2}$
\end{adjustbox}
\\
\hline\begin{adjustbox}{max width=0.95\textwidth}

$\mathcal{F}_{19,k}^{(r)}=$\begin{tikzcd}[row sep=small, column sep =0.]
&&&&&&&&&&& _{k+1} \arrow[dl]\arrow[dr]&& _{k+2} \arrow[dl]\arrow[dr]&&\cdots && _{r+2}\arrow[dl]\arrow[dr]& _{r+3}\arrow[d]\\
 _1\arrow[dr]& _2\arrow[d] &_3\arrow[dl]\arrow[dr]& &_4\arrow[dl]\arrow[dr]
 &&\cdots && _{k-1}\arrow[dr] \arrow[dl]&& _{k}\arrow[dl]&&\star && \star &\cdots &\star && \star \\
&\star &  & \star &&\star &\cdots &\star &&\star \\
\end{tikzcd}

$_{r \geq 2\text{, }3 \leq k \leq r+1}$
\end{adjustbox}
\\
\hline
\begin{adjustbox}{max width=0.95\textwidth}
$\mathcal{F}_{20}^{(r)}=$\begin{tikzcd}[row sep= small, column sep = 0.4]
& _1\arrow[d]&&&&&&&&&_{r+1}\arrow[d]\\ 
 & _2 \arrow[dl]\arrow[dr]&&_3\arrow[dl]\arrow[dr] & &_4\arrow[dl]\arrow[dr]&&\cdots &&\ _{r}\arrow[dl]\arrow[dr]& _{r+2}\arrow[d] \\
\star & & \star &&\star &&\star&\cdots &\star&&\star\\
\end{tikzcd}$_{r \geq 2}$
\end{adjustbox}
\\
\hline

\end{longtable}

\clearpage
\section{Sincere Representations of the Sincere Posets of Type $\DD$}\label{sincererepsD}

For each sincere poset $\mathcal{F}$ of type $\DD$ listed in Appendix \ref{sincereposetsD}, we display below the complete list of the sincere indecomposable socle-projective $\Bbbk\mathcal{F}$-modules. The list is the one obtained by Kosakowska in \cite[Tables 8.2]{justina2} (see also \cite[Table 8.1]{justina} and \cite[Table 8.1]{justina1} for the two-peak and the three-peak cases, and \cite[Theorem 1]{kleiner75}); recall that every socle-projective $\Bbbk\mathcal{F}$-module is prinjective, although the converse fails in general; for a \emph{sincere} module, however, the two notions coincide (see \cite[Section 3]{schifflerserna}), so the sincere prinjective $\Bbbk\mathcal{F}$-modules tabulated by Kosakowska are precisely the sincere socle-projective $\Bbbk\mathcal{F}$-modules listed below.\\

\textbf{Conventions.} In every diagram, we consider a representation  $L=(L_x,{_y}h_x)_{x,y\in\mathcal{F}}$. Following \cite[Section 8]{justina2}, an unlabelled arrow is the identity whenever $\dim L_x=\dim L_y$, and it is the canonical map
$\left[\begin{smallmatrix}1&0\end{smallmatrix}\right]$ (respectively, $\left[\begin{smallmatrix}1&0\end{smallmatrix}\right]^{t}$) whenever $\dim L_x=2$ and $\dim L_y=1$ (respectively, $\dim L_x=1$ and $\dim L_y=2$); an arrow labelled $\varepsilon$ is the complementary canonical map $\left[\begin{smallmatrix}0&1\end{smallmatrix}\right]$ (respectively, $\left[\begin{smallmatrix}0&1\end{smallmatrix}\right]^{t}$); and an arrow labelled $d$ is the map given by the matrix $\left[\begin{smallmatrix}1&1\end{smallmatrix}\right]$ (respectively, $\left[\begin{smallmatrix}1&1\end{smallmatrix}\right]^{t}$).\\

Each poset below is named $\mathcal{F}_i^{(r)}$ (or $\mathcal{F}_{i,k}^{(r)}$), the same name it receives in Appendix \ref{sincereposetsD}, following the numbering and order of \cite[Table 2.2]{justina2}. The three families $\mathcal{F}^{(r)}_{8}$, $\mathcal{F}^{(r)}_{10}$ and $\mathcal{F}^{(r)}_{13}$ of that table --- absent from the list below, together with their sincere representations $M^{(1,r)}_{8}$, $M^{(1,r)}_{10}$ and $M^{(1,r)}_{13}$ --- correspond to sincere posets of type $\AA$ and are covered instead by Lemma \ref{1}.\\

In the tables below, the label in the left-hand column is the name of the module in \cite[Tables 8.2]{justina2}; the parameter $k$ occurring in a family $\mathcal{F}^{(r)}_{i,k}$ is the same parameter as in Appendix \ref{sincereposetsD}.

\begin{longtable}{|c|c|}
\hline
$M^{(1,r)}_{1}$ &
\begin{adjustbox}{max width=0.60\textwidth}
\begin{tikzcd}[row sep=small, column sep=0.45]
 & \kO \arrow[d]\arrow[dl]\arrow[dr]&&\kO\arrow[dl]\arrow[dr] & &\kO\arrow[dl]\arrow[dr]&&\cdots &&\kO\arrow[dl]\arrow[dr] \\
\kO & \kO & \kO &&\kO &&\kO&\cdots &\kO&&\kO\\
\end{tikzcd}
\end{adjustbox}
\\
\hline
$M^{(2,r)}_{1}$ &
\begin{adjustbox}{max width=0.60\textwidth}
\begin{tikzcd}[row sep=small, column sep=0.45]
 & \kT \arrow[d,"\varepsilon"',pos=0.28]\arrow[dl]\arrow[dr,"d"]&&\kO\arrow[dl]\arrow[dr] & &\kO\arrow[dl]\arrow[dr]&&\cdots &&\kO\arrow[dl]\arrow[dr] \\
\kO & \kO & \kO &&\kO &&\kO&\cdots &\kO&&\kO\\
\end{tikzcd}
\end{adjustbox}
\\
\hline
$M^{(3,r)}_{1}$ &
\begin{adjustbox}{max width=0.60\textwidth}
\begin{tikzcd}[row sep=small, column sep=0.45]
 & \kT \arrow[d,"\varepsilon"',pos=0.28]\arrow[dl]\arrow[dr]&&\kO\arrow[dl,"d"']\arrow[dr] & &\kO\arrow[dl]\arrow[dr]&&\cdots &&\kO\arrow[dl]\arrow[dr] \\
\kO & \kO & \kT &&\kO &&\kO&\cdots &\kO&&\kO\\
\end{tikzcd}
\end{adjustbox}
\\
\hline
$M^{(2k,r)}_{1}$, $2\leq k\leq r-3$ &
\begin{adjustbox}{max width=0.60\textwidth}
\begin{tikzcd}[row sep=small, column sep=0.45]
 & \kT \arrow[d,"\varepsilon"',pos=0.28]\arrow[dl]\arrow[dr]&&\kT\arrow[dl]\arrow[dr] & &\cdots & &\kT\arrow[dl]\arrow[dr,"d"]&&\kO\arrow[dl]\arrow[dr]&&\cdots &&\kO\arrow[dl]\arrow[dr] \\
\kO & \kO & \kT &&\kT &\cdots &\kT &&\kO &&\kO &\cdots &\kO &&\kO\\
\end{tikzcd}
\end{adjustbox}
\\
\hline
$M^{(2k+1,r)}_{1}$, $2\leq k\leq r-4$ &
\begin{adjustbox}{max width=0.60\textwidth}
\begin{tikzcd}[row sep=small, column sep=0.45]
 & \kT \arrow[d,"\varepsilon"',pos=0.28]\arrow[dl]\arrow[dr]&&\kT\arrow[dl]\arrow[dr] & &\cdots & &\kT\arrow[dl]\arrow[dr]&&\kO\arrow[dl,"d"']\arrow[dr]&&\cdots &&\kO\arrow[dl]\arrow[dr] \\
\kO & \kO & \kT &&\kT &\cdots &\kT &&\kT &&\kO &\cdots &\kO &&\kO\\
\end{tikzcd}
\end{adjustbox}
\\
\hline
$M^{(2r-5,r)}_{1}$ &
\begin{adjustbox}{max width=0.60\textwidth}
\begin{tikzcd}[row sep=small, column sep=0.45]
 & \kT \arrow[d,"\varepsilon"',pos=0.28]\arrow[dl]\arrow[dr]&&\kT\arrow[dl]\arrow[dr] & &\cdots & &\kT\arrow[dl]\arrow[dr]&&\kO\arrow[dl,"d"']\arrow[dr] \\
\kO & \kO & \kT &&\kT &\cdots &\kT &&\kT &&\kO\\
\end{tikzcd}
\end{adjustbox}
\\
\hline
$M^{(2r-4,r)}_{1}$ &
\begin{adjustbox}{max width=0.60\textwidth}
\begin{tikzcd}[row sep=small, column sep=0.45]
 & \kT \arrow[d,"\varepsilon"',pos=0.28]\arrow[dl]\arrow[dr]&&\kT\arrow[dl]\arrow[dr] & &\cdots & &\kT\arrow[dl]\arrow[dr]&&\kT\arrow[dl]\arrow[dr,"d"] \\
\kO & \kO & \kT &&\kT &\cdots &\kT &&\kT &&\kO\\
\end{tikzcd}
\end{adjustbox}
\\
\hline
$M^{(1,r)}_{2}$ &
\begin{adjustbox}{max width=0.60\textwidth}
\begin{tikzcd}[row sep=small, column sep=0.45]
& & \kO \arrow[dl]\arrow[dr]&&\kO\arrow[dl]\arrow[dr] & &\kO\arrow[dl]\arrow[dr]&&\cdots &&\kO\arrow[dl]\arrow[dr] \\
&\kT\arrow[dl,"d"']\arrow[dr]&  & \kO &&\kO &&\kO&\cdots &\kO&&\kO\\
\kO && \kO\\
\end{tikzcd}
\end{adjustbox}
\\
\hline
$M^{(1,r)}_{3,k}$ &
\begin{adjustbox}{max width=0.60\textwidth}
\begin{tikzcd}[row sep=small, column sep=0.45]
&&&&&&&&&& \kO\arrow[dl,"d"']\arrow[dr]&& \kO \arrow[dl]\arrow[dr]&&\cdots && \kO\arrow[dl]\arrow[dr]\\
 & \kT \arrow[d,"\varepsilon"',pos=0.28]\arrow[dl]\arrow[dr]&&\kT\arrow[dl]\arrow[dr]& &
 \cdots && \kT\arrow[dr] \arrow[dl]&& \kT\arrow[dl]&&\kO && \kO &\cdots &\kO && \kO \\
\kO & \kO & \kT &&\kT &\cdots &\kT &&\kT \\
\end{tikzcd}
\end{adjustbox}
\\
\hline
$M^{(1,r)}_{4}$ &
\begin{adjustbox}{max width=0.60\textwidth}
\begin{tikzcd}[row sep=small, column sep=0.45]
 & \kO \arrow[d]\arrow[dl]\arrow[dr]&&\kO\arrow[dl]\arrow[dr] & &\kO\arrow[dl]\arrow[dr]&&\cdots &&\kO\arrow[dl]\arrow[dr]& \kO\arrow[d] \\
\kO & \kO & \kO &&\kO &&\kO&\cdots &\kO&&\kO\\
\end{tikzcd}
\end{adjustbox}
\\
\hline
$M^{(2,r)}_{4}$ &
\begin{adjustbox}{max width=0.60\textwidth}
\begin{tikzcd}[row sep=small, column sep=0.45]
 & \kT \arrow[d,"\varepsilon"',pos=0.28]\arrow[dl]\arrow[dr,"d"]&&\kO\arrow[dl]\arrow[dr] & &\kO\arrow[dl]\arrow[dr]&&\cdots &&\kO\arrow[dl]\arrow[dr]& \kO\arrow[d] \\
\kO & \kO & \kO &&\kO &&\kO&\cdots &\kO&&\kO\\
\end{tikzcd}
\end{adjustbox}
\\
\hline
$M^{(3,r)}_{4}$ &
\begin{adjustbox}{max width=0.60\textwidth}
\begin{tikzcd}[row sep=small, column sep=0.45]
 & \kT \arrow[d,"\varepsilon"',pos=0.28]\arrow[dl]\arrow[dr]&&\kO\arrow[dl,"d"']\arrow[dr] & &\kO\arrow[dl]\arrow[dr]&&\cdots &&\kO\arrow[dl]\arrow[dr]& \kO\arrow[d] \\
\kO & \kO & \kT &&\kO &&\kO&\cdots &\kO&&\kO\\
\end{tikzcd}
\end{adjustbox}
\\
\hline
$M^{(2k,r)}_{4}$, $2\leq k\leq r-3$ &
\begin{adjustbox}{max width=0.60\textwidth}
\begin{tikzcd}[row sep=small, column sep=0.45]
 & \kT \arrow[d,"\varepsilon"',pos=0.28]\arrow[dl]\arrow[dr]&&\kT\arrow[dl]\arrow[dr] & &\cdots & &\kT\arrow[dl]\arrow[dr,"d"]&&\kO\arrow[dl]\arrow[dr]&&\cdots &&\kO\arrow[dl]\arrow[dr]& \kO\arrow[d] \\
\kO & \kO & \kT &&\kT &\cdots &\kT &&\kO &&\kO &\cdots &\kO &&\kO\\
\end{tikzcd}
\end{adjustbox}
\\
\hline
$M^{(2k+1,r)}_{4}$, $2\leq k\leq r-3$ &
\begin{adjustbox}{max width=0.60\textwidth}
\begin{tikzcd}[row sep=small, column sep=0.45]
 & \kT \arrow[d,"\varepsilon"',pos=0.28]\arrow[dl]\arrow[dr]&&\kT\arrow[dl]\arrow[dr] & &\cdots & &\kT\arrow[dl]\arrow[dr]&&\kO\arrow[dl,"d"']\arrow[dr]&&\cdots &&\kO\arrow[dl]\arrow[dr]& \kO\arrow[d] \\
\kO & \kO & \kT &&\kT &\cdots &\kT &&\kT &&\kO &\cdots &\kO &&\kO\\
\end{tikzcd}
\end{adjustbox}
\\
\hline
$M^{(2r-4,r)}_{4}$ &
\begin{adjustbox}{max width=0.60\textwidth}
\begin{tikzcd}[row sep=small, column sep=0.45]
 & \kT \arrow[d,"\varepsilon"',pos=0.28]\arrow[dl]\arrow[dr]&&\kT\arrow[dl]\arrow[dr] & &\cdots & &\kT\arrow[dl]\arrow[dr]&&\kT\arrow[dl]\arrow[dr,"d"]& \kO\arrow[d] \\
\kO & \kO & \kT &&\kT &\cdots &\kT &&\kT &&\kO\\
\end{tikzcd}
\end{adjustbox}
\\
\hline
$M^{(2r-3,r)}_{4}$ &
\begin{adjustbox}{max width=0.60\textwidth}
\begin{tikzcd}[row sep=small, column sep=0.45]
 & \kT \arrow[d,"\varepsilon"',pos=0.28]\arrow[dl]\arrow[dr]&&\kT\arrow[dl]\arrow[dr] & &\cdots & &\kT\arrow[dl]\arrow[dr]&&\kT\arrow[dl]\arrow[dr]& \kO\arrow[d,"d"] \\
\kO & \kO & \kT &&\kT &\cdots &\kT &&\kT &&\kT\\
\end{tikzcd}
\end{adjustbox}
\\
\hline
$M^{(1,r)}_{5}$ &
\begin{adjustbox}{max width=0.60\textwidth}
\begin{tikzcd}[row sep=small, column sep=0.45]
& & \kO \arrow[dl]\arrow[dr]&&\kO\arrow[dl]\arrow[dr] & &\kO\arrow[dl]\arrow[dr]&&\cdots &&\kO\arrow[dl]\arrow[dr]& \kO\arrow[d] \\
&\kT\arrow[dl,"d"']\arrow[dr]&  & \kO &&\kO &&\kO&\cdots &\kO&&\kO\\
\kO && \kO\\
\end{tikzcd}
\end{adjustbox}
\\
\hline
$M^{(1,r)}_{6,k}$ &
\begin{adjustbox}{max width=0.60\textwidth}
\begin{tikzcd}[row sep=small, column sep=0.45]
&&&&&&&&&& \kO\arrow[dl,"d"']\arrow[dr]&& \kO \arrow[dl]\arrow[dr]&&\cdots && \kO\arrow[dl]\arrow[dr] & \kO\arrow[d]\\
 & \kT \arrow[d,"\varepsilon"',pos=0.28]\arrow[dl]\arrow[dr]&&\kT\arrow[dl]\arrow[dr]& &
 \cdots && \kT\arrow[dr] \arrow[dl]&& \kT\arrow[dl]&&\kO && \kO &\cdots &\kO && \kO \\
\kO & \kO & \kT &&\kT &\cdots &\kT &&\kT \\
\end{tikzcd}
\end{adjustbox}
\\
\hline
$M^{(1,r)}_{7}$ &
\begin{adjustbox}{max width=0.60\textwidth}
\begin{tikzcd}[row sep=small, column sep=0.45]
&&&&&&&&&& \kO\arrow[d,"d"]\\
 & \kT \arrow[d,"\varepsilon"',pos=0.28]\arrow[dl]\arrow[dr]&&\kT\arrow[dl]\arrow[dr] & &\cdots & &\kT\arrow[dl]\arrow[dr]&&\kT\arrow[dl]\arrow[dr]& \kT\arrow[d] \\
\kO & \kO & \kT &&\kT &\cdots &\kT &&\kT &&\kT\\
\end{tikzcd}
\end{adjustbox}
\\
\hline
$M^{(1,r)}_{9}$ &
\begin{adjustbox}{max width=0.60\textwidth}
\begin{tikzcd}[row sep=small, column sep=0.45]
& \kO\arrow[d]\\
 & \kT \arrow[dl]\arrow[dr,"d"]&&\kO\arrow[dl]\arrow[dr] & &\kO\arrow[dl]\arrow[dr]&&\cdots &&\kO\arrow[dl]\arrow[dr] \\
\kO & & \kO &&\kO &&\kO&\cdots &\kO&&\kO\\
\end{tikzcd}
\end{adjustbox}
\\
\hline
$M^{(2,r)}_{9}$ &
\begin{adjustbox}{max width=0.60\textwidth}
\begin{tikzcd}[row sep=small, column sep=0.45]
& \kO\arrow[d]\\
 & \kT \arrow[dl]\arrow[dr]&&\kO\arrow[dl,"d"']\arrow[dr] & &\kO\arrow[dl]\arrow[dr]&&\cdots &&\kO\arrow[dl]\arrow[dr] \\
\kO & & \kT &&\kO &&\kO&\cdots &\kO&&\kO\\
\end{tikzcd}
\end{adjustbox}
\\
\hline
$M^{(2k+1,r)}_{9}$, $1\leq k\leq r-3$ &
\begin{adjustbox}{max width=0.60\textwidth}
\begin{tikzcd}[row sep=small, column sep=0.45]
& \kO\arrow[d]\\
 & \kT \arrow[dl]\arrow[dr]&&\kT\arrow[dl]\arrow[dr] & &\cdots & &\kT\arrow[dl]\arrow[dr,"d"]&&\kO\arrow[dl]\arrow[dr]&&\cdots &&\kO\arrow[dl]\arrow[dr] \\
\kO & & \kT &&\kT &\cdots &\kT &&\kO &&\kO &\cdots &\kO &&\kO\\
\end{tikzcd}
\end{adjustbox}
\\
\hline
$M^{(2k,r)}_{9}$, $2\leq k\leq r-3$ &
\begin{adjustbox}{max width=0.60\textwidth}
\begin{tikzcd}[row sep=small, column sep=0.45]
& \kO\arrow[d]\\
 & \kT \arrow[dl]\arrow[dr]&&\kT\arrow[dl]\arrow[dr] & &\cdots & &\kT\arrow[dl]\arrow[dr]&&\kO\arrow[dl,"d"']\arrow[dr]&&\cdots &&\kO\arrow[dl]\arrow[dr] \\
\kO & & \kT &&\kT &\cdots &\kT &&\kT &&\kO &\cdots &\kO &&\kO\\
\end{tikzcd}
\end{adjustbox}
\\
\hline
$M^{(2r-4,r)}_{9}$ &
\begin{adjustbox}{max width=0.60\textwidth}
\begin{tikzcd}[row sep=small, column sep=0.45]
& \kO\arrow[d]\\
 & \kT \arrow[dl]\arrow[dr]&&\kT\arrow[dl]\arrow[dr] & &\cdots & &\kT\arrow[dl]\arrow[dr]&&\kO\arrow[dl,"d"']\arrow[dr] \\
\kO & & \kT &&\kT &\cdots &\kT &&\kT &&\kO\\
\end{tikzcd}
\end{adjustbox}
\\
\hline
$M^{(2r-3,r)}_{9}$ &
\begin{adjustbox}{max width=0.60\textwidth}
\begin{tikzcd}[row sep=small, column sep=0.45]
& \kO\arrow[d]\\
 & \kT \arrow[dl]\arrow[dr]&&\kT\arrow[dl]\arrow[dr] & &\cdots & &\kT\arrow[dl]\arrow[dr]&&\kT\arrow[dl]\arrow[dr,"d"] \\
\kO & & \kT &&\kT &\cdots &\kT &&\kT &&\kO\\
\end{tikzcd}
\end{adjustbox}
\\
\hline
$M^{(1,r)}_{11,k}$ &
\begin{adjustbox}{max width=0.60\textwidth}
\begin{tikzcd}[row sep=small, column sep=0.45]
&\kO\arrow[d]&&&&&&&&& \kO\arrow[dl,"d"']\arrow[dr]&& \kO \arrow[dl]\arrow[dr]&&\cdots && \kO\arrow[dl]\arrow[dr]\\
 & \kT \arrow[dl]\arrow[dr]&&\kT\arrow[dl]\arrow[dr]& &
 \cdots && \kT\arrow[dr] \arrow[dl]&& \kT\arrow[dl]&&\kO && \kO &\cdots &\kO && \kO \\
\kO &  & \kT &&\kT &\cdots &\kT &&\kT \\
\end{tikzcd}
\end{adjustbox}
\\
\hline
$M^{(1,r)}_{12}$ &
\begin{adjustbox}{max width=0.60\textwidth}
\begin{tikzcd}[row sep=small, column sep=0.45]
& \kO\arrow[d]\\
 & \kT \arrow[dl]\arrow[dr,"d"]&&\kO\arrow[dl]\arrow[dr] & &\kO\arrow[dl]\arrow[dr]&&\cdots &&\kO\arrow[dl]\arrow[dr]& \kO\arrow[d] \\
\kO & & \kO &&\kO &&\kO&\cdots &\kO&&\kO\\
\end{tikzcd}
\end{adjustbox}
\\
\hline
$M^{(2,r)}_{12}$ &
\begin{adjustbox}{max width=0.60\textwidth}
\begin{tikzcd}[row sep=small, column sep=0.45]
& \kO\arrow[d]\\
 & \kT \arrow[dl]\arrow[dr]&&\kO\arrow[dl,"d"']\arrow[dr] & &\kO\arrow[dl]\arrow[dr]&&\cdots &&\kO\arrow[dl]\arrow[dr]& \kO\arrow[d] \\
\kO & & \kT &&\kO &&\kO&\cdots &\kO&&\kO\\
\end{tikzcd}
\end{adjustbox}
\\
\hline
$M^{(2k+1,r)}_{12}$, $1\leq k\leq r-3$ &
\begin{adjustbox}{max width=0.60\textwidth}
\begin{tikzcd}[row sep=small, column sep=0.45]
& \kO\arrow[d]\\
 & \kT \arrow[dl]\arrow[dr]&&\kT\arrow[dl]\arrow[dr] & &\cdots & &\kT\arrow[dl]\arrow[dr,"d"]&&\kO\arrow[dl]\arrow[dr]&&\cdots &&\kO\arrow[dl]\arrow[dr]& \kO\arrow[d] \\
\kO & & \kT &&\kT &\cdots &\kT &&\kO &&\kO &\cdots &\kO &&\kO\\
\end{tikzcd}
\end{adjustbox}
\\
\hline
$M^{(2k,r)}_{12}$, $2\leq k\leq r-2$ &
\begin{adjustbox}{max width=0.60\textwidth}
\begin{tikzcd}[row sep=small, column sep=0.45]
& \kO\arrow[d]\\
 & \kT \arrow[dl]\arrow[dr]&&\kT\arrow[dl]\arrow[dr] & &\cdots & &\kT\arrow[dl]\arrow[dr]&&\kO\arrow[dl,"d"']\arrow[dr]&&\cdots &&\kO\arrow[dl]\arrow[dr]& \kO\arrow[d] \\
\kO & & \kT &&\kT &\cdots &\kT &&\kT &&\kO &\cdots &\kO &&\kO\\
\end{tikzcd}
\end{adjustbox}
\\
\hline
$M^{(2r-3,r)}_{12}$ &
\begin{adjustbox}{max width=0.60\textwidth}
\begin{tikzcd}[row sep=small, column sep=0.45]
& \kO\arrow[d]\\
 & \kT \arrow[dl]\arrow[dr]&&\kT\arrow[dl]\arrow[dr] & &\cdots & &\kT\arrow[dl]\arrow[dr]&&\kT\arrow[dl]\arrow[dr,"d"]& \kO\arrow[d] \\
\kO & & \kT &&\kT &\cdots &\kT &&\kT &&\kO\\
\end{tikzcd}
\end{adjustbox}
\\
\hline
$M^{(2r-2,r)}_{12}$ &
\begin{adjustbox}{max width=0.60\textwidth}
\begin{tikzcd}[row sep=small, column sep=0.45]
& \kO\arrow[d]\\
 & \kT \arrow[dl]\arrow[dr]&&\kT\arrow[dl]\arrow[dr] & &\cdots & &\kT\arrow[dl]\arrow[dr]&&\kT\arrow[dl]\arrow[dr]& \kO\arrow[d,"d"] \\
\kO & & \kT &&\kT &\cdots &\kT &&\kT &&\kT\\
\end{tikzcd}
\end{adjustbox}
\\
\hline
$M^{(1,r)}_{14}$ &
\begin{adjustbox}{max width=0.60\textwidth}
\begin{tikzcd}[row sep=small, column sep=0.45]
 & \kO \arrow[dl]\arrow[dr]&&\kO\arrow[dl]\arrow[dr] & &\kO\arrow[dl]\arrow[dr]&&\cdots &&\kO\arrow[dl]\arrow[dr] & \kO\arrow[d]& \kO\arrow[dl] \\
\kO &  & \kO &&\kO &&\kO&\cdots &\kO&&\kO\\
\end{tikzcd}
\end{adjustbox}
\\
\hline
$M^{(2,r)}_{14}$ &
\begin{adjustbox}{max width=0.60\textwidth}
\begin{tikzcd}[row sep=small, column sep=0.45]
 & \kO \arrow[dl]\arrow[dr]&&\kO\arrow[dl]\arrow[dr] & &\kO\arrow[dl]\arrow[dr]&&\cdots &&\kO\arrow[dl]\arrow[dr,"d"',pos=0.6] & \kO\arrow[d,"\varepsilon"',pos=0.28]& \kO\arrow[dl] \\
\kO &  & \kO &&\kO &&\kO&\cdots &\kO&&\kT\\
\end{tikzcd}
\end{adjustbox}
\\
\hline
$M^{(3,r)}_{14}$ &
\begin{adjustbox}{max width=0.60\textwidth}
\begin{tikzcd}[row sep=small, column sep=0.45]
 & \kO \arrow[dl]\arrow[dr]&&\kO\arrow[dl]\arrow[dr] & &\kO\arrow[dl]\arrow[dr]&&\cdots &&\kT\arrow[dl,"d"']\arrow[dr] & \kO\arrow[d,"\varepsilon"',pos=0.28]& \kO\arrow[dl] \\
\kO &  & \kO &&\kO &&\kO&\cdots &\kO&&\kT\\
\end{tikzcd}
\end{adjustbox}
\\
\hline
$M^{(2k,r)}_{14}$, $2\leq k\leq r-3$ &
\begin{adjustbox}{max width=0.60\textwidth}
\begin{tikzcd}[row sep=small, column sep=0.45]
 & \kO\arrow[dl]\arrow[dr]&&\kO\arrow[dl]\arrow[dr] & &\cdots & &\kO\arrow[dl]\arrow[dr,"d"]&&\kT\arrow[dl]\arrow[dr]&&\cdots &&\kT\arrow[dl]\arrow[dr]& \kO\arrow[d,"\varepsilon"',pos=0.28]& \kO\arrow[dl] \\
\kO &  & \kO &&\kO &\cdots &\kO &&\kT &&\kT &\cdots &\kT &&\kT\\
\end{tikzcd}
\end{adjustbox}
\\
\hline
$M^{(2k+1,r)}_{14}$, $2\leq k\leq r-3$ &
\begin{adjustbox}{max width=0.60\textwidth}
\begin{tikzcd}[row sep=small, column sep=0.45]
 & \kO\arrow[dl]\arrow[dr]&&\kO\arrow[dl]\arrow[dr] & &\cdots & &\kO\arrow[dl]\arrow[dr]&&\kT\arrow[dl,"d"']\arrow[dr]&&\cdots &&\kT\arrow[dl]\arrow[dr]& \kO\arrow[d,"\varepsilon"',pos=0.28]& \kO\arrow[dl] \\
\kO &  & \kO &&\kO &\cdots &\kO &&\kO &&\kT &\cdots &\kT &&\kT\\
\end{tikzcd}
\end{adjustbox}
\\
\hline
$M^{(2r-4,r)}_{14}$ &
\begin{adjustbox}{max width=0.60\textwidth}
\begin{tikzcd}[row sep=small, column sep=0.45]
 & \kO\arrow[dl]\arrow[dr,"d"]&&\kT\arrow[dl]\arrow[dr] & &\cdots & &\kT\arrow[dl]\arrow[dr]& \kO\arrow[d,"\varepsilon"',pos=0.28]& \kO\arrow[dl] \\
\kO &  & \kT &&\kT &\cdots &\kT &&\kT\\
\end{tikzcd}
\end{adjustbox}
\\
\hline
$M^{(2r-3,r)}_{14}$ &
\begin{adjustbox}{max width=0.60\textwidth}
\begin{tikzcd}[row sep=small, column sep=0.45]
 & \kT\arrow[dl,"d"']\arrow[dr]&&\kT\arrow[dl]\arrow[dr] & &\cdots & &\kT\arrow[dl]\arrow[dr]& \kO\arrow[d,"\varepsilon"',pos=0.28]& \kO\arrow[dl] \\
\kO &  & \kT &&\kT &\cdots &\kT &&\kT\\
\end{tikzcd}
\end{adjustbox}
\\
\hline
$M^{(1,r)}_{15,k}$ &
\begin{adjustbox}{max width=0.60\textwidth}
\begin{tikzcd}[row sep=small, column sep=0.45]
&\kO\arrow[d]&&&&&&&&& \kO\arrow[dl,"d"']\arrow[dr]&& \kO \arrow[dl]\arrow[dr]&&\cdots && \kO\arrow[dl]\arrow[dr] & \kO\arrow[d]\\
 & \kT \arrow[dl]\arrow[dr]&&\kT\arrow[dl]\arrow[dr]& &
 \cdots && \kT\arrow[dr] \arrow[dl]&& \kT\arrow[dl]&&\kO && \kO &\cdots &\kO && \kO \\
\kO &  & \kT &&\kT &\cdots &\kT &&\kT \\
\end{tikzcd}
\end{adjustbox}
\\
\hline
$M^{(1,r)}_{16}$ &
\begin{adjustbox}{max width=0.60\textwidth}
\begin{tikzcd}[row sep=small, column sep=0.45]
\kO\arrow[d]& \kO \arrow[dl]\arrow[dr]&&\kO\arrow[dl]\arrow[dr] & &\kO\arrow[dl]\arrow[dr]&&\cdots &&\kO\arrow[dl]\arrow[dr] & \kO\arrow[d]& \kO\arrow[dl] \\
\kO &  & \kO &&\kO &&\kO&\cdots &\kO&&\kO\\
\end{tikzcd}
\end{adjustbox}
\\
\hline
$M^{(2,r)}_{16}$ &
\begin{adjustbox}{max width=0.60\textwidth}
\begin{tikzcd}[row sep=small, column sep=0.45]
\kO\arrow[d]& \kO \arrow[dl]\arrow[dr]&&\kO\arrow[dl]\arrow[dr] & &\kO\arrow[dl]\arrow[dr]&&\cdots &&\kO\arrow[dl]\arrow[dr,"d"',pos=0.6] & \kO\arrow[d,"\varepsilon"',pos=0.28]& \kO\arrow[dl] \\
\kO &  & \kO &&\kO &&\kO&\cdots &\kO&&\kT\\
\end{tikzcd}
\end{adjustbox}
\\
\hline
$M^{(3,r)}_{16}$ &
\begin{adjustbox}{max width=0.60\textwidth}
\begin{tikzcd}[row sep=small, column sep=0.45]
\kO\arrow[d]& \kO \arrow[dl]\arrow[dr]&&\kO\arrow[dl]\arrow[dr] & &\kO\arrow[dl]\arrow[dr]&&\cdots &&\kT\arrow[dl,"d"']\arrow[dr] & \kO\arrow[d,"\varepsilon"',pos=0.28]& \kO\arrow[dl] \\
\kO &  & \kO &&\kO &&\kO&\cdots &\kO&&\kT\\
\end{tikzcd}
\end{adjustbox}
\\
\hline
$M^{(2k+2,r)}_{16}$, $1\leq k\leq r-2$ &
\begin{adjustbox}{max width=0.60\textwidth}
\begin{tikzcd}[row sep=small, column sep=0.45]
\kO\arrow[d]& \kO\arrow[dl]\arrow[dr]&&\kO\arrow[dl]\arrow[dr] & &\cdots & &\kO\arrow[dl]\arrow[dr,"d"]&&\kT\arrow[dl]\arrow[dr]&&\cdots &&\kT\arrow[dl]\arrow[dr]& \kO\arrow[d,"\varepsilon"',pos=0.28]& \kO\arrow[dl] \\
\kO &  & \kO &&\kO &\cdots &\kO &&\kT &&\kT &\cdots &\kT &&\kT\\
\end{tikzcd}
\end{adjustbox}
\\
\hline
$M^{(2k+1,r)}_{16}$, $2\leq k\leq r-2$ &
\begin{adjustbox}{max width=0.60\textwidth}
\begin{tikzcd}[row sep=small, column sep=0.45]
\kO\arrow[d]& \kO\arrow[dl]\arrow[dr]&&\kO\arrow[dl]\arrow[dr] & &\cdots & &\kO\arrow[dl]\arrow[dr]&&\kT\arrow[dl,"d"']\arrow[dr]&&\cdots &&\kT\arrow[dl]\arrow[dr]& \kO\arrow[d,"\varepsilon"',pos=0.28]& \kO\arrow[dl] \\
\kO &  & \kO &&\kO &\cdots &\kO &&\kO &&\kT &\cdots &\kT &&\kT\\
\end{tikzcd}
\end{adjustbox}
\\
\hline
$M^{(2r-1,r)}_{16}$ &
\begin{adjustbox}{max width=0.60\textwidth}
\begin{tikzcd}[row sep=small, column sep=0.45]
\kO\arrow[d]& \kT\arrow[dl,"d"']\arrow[dr]&&\kT\arrow[dl]\arrow[dr] & &\cdots & &\kT\arrow[dl]\arrow[dr]& \kO\arrow[d,"\varepsilon"',pos=0.28]& \kO\arrow[dl] \\
\kO &  & \kT &&\kT &\cdots &\kT &&\kT\\
\end{tikzcd}
\end{adjustbox}
\\
\hline
$M^{(2r,r)}_{16}$ &
\begin{adjustbox}{max width=0.60\textwidth}
\begin{tikzcd}[row sep=small, column sep=0.45]
\kO\arrow[d,"d"']& \kT\arrow[dl]\arrow[dr]&&\kT\arrow[dl]\arrow[dr] & &\cdots & &\kT\arrow[dl]\arrow[dr]& \kO\arrow[d,"\varepsilon"',pos=0.28]& \kO\arrow[dl] \\
\kT &  & \kT &&\kT &\cdots &\kT &&\kT\\
\end{tikzcd}
\end{adjustbox}
\\
\hline
$M^{(1,r)}_{17,k}$ &
\begin{adjustbox}{max width=0.60\textwidth}
\begin{tikzcd}[row sep=small, column sep=0.45]
&&&&&&&&&&& \kO\arrow[dl,"d"']\arrow[dr]&& \kO \arrow[dl]\arrow[dr]&&\cdots && \kO\arrow[dl]\arrow[dr]\\
 \kO\arrow[dr]& \kO\arrow[d,"\varepsilon"',pos=0.28] &\kT\arrow[dl]\arrow[dr]& &\kT\arrow[dl]\arrow[dr]
 &&\cdots && \kT\arrow[dr] \arrow[dl]&& \kT\arrow[dl]&&\kO && \kO &\cdots &\kO && \kO \\
&\kT &  & \kT &&\kT &\cdots &\kT &&\kT \\
\end{tikzcd}
\end{adjustbox}
\\
\hline
$M^{(1,r)}_{18}$ &
\begin{adjustbox}{max width=0.60\textwidth}
\begin{tikzcd}[row sep=small, column sep=0.45]
\kO\arrow[d,"d"']\\
\kT\arrow[d]& \kT\arrow[dl]\arrow[dr]&&\kT\arrow[dl]\arrow[dr] & &\cdots & &\kT\arrow[dl]\arrow[dr]& \kO\arrow[d,"\varepsilon"',pos=0.28]& \kO\arrow[dl] \\
\kT &  & \kT &&\kT &\cdots &\kT &&\kT\\
\end{tikzcd}
\end{adjustbox}
\\
\hline
$M^{(1,r)}_{19,k}$ &
\begin{adjustbox}{max width=0.60\textwidth}
\begin{tikzcd}[row sep=small, column sep=0.45]
&&&&&&&&&&& \kO\arrow[dl,"d"']\arrow[dr]&& \kO \arrow[dl]\arrow[dr]&&\cdots && \kO\arrow[dl]\arrow[dr] & \kO\arrow[d]\\
 \kO\arrow[dr]& \kO\arrow[d,"\varepsilon"',pos=0.28] &\kT\arrow[dl]\arrow[dr]& &\kT\arrow[dl]\arrow[dr]
 &&\cdots && \kT\arrow[dr] \arrow[dl]&& \kT\arrow[dl]&&\kO && \kO &\cdots &\kO && \kO \\
&\kT &  & \kT &&\kT &\cdots &\kT &&\kT \\
\end{tikzcd}
\end{adjustbox}
\\
\hline
$M^{(1,r)}_{20}$ &
\begin{adjustbox}{max width=0.60\textwidth}
\begin{tikzcd}[row sep=small, column sep=0.45]
& \kO\arrow[d]&&&&&&&&& \kO\arrow[d,"d"]\\
 & \kT \arrow[dl]\arrow[dr]&&\kT\arrow[dl]\arrow[dr] & &\cdots & &\kT\arrow[dl]\arrow[dr]&&\kT\arrow[dl]\arrow[dr]& \kT\arrow[d] \\
\kO & & \kT &&\kT &\cdots &\kT &&\kT &&\kT\\
\end{tikzcd}
\end{adjustbox}
\\
\hline
\end{longtable}

\begin{remark}\label{sincereDmodules}
Let $S$ be a sincere poset of type $\DD$, with associated quiver $Q_S$, branch vertex $z$, and leaves $u$ and $v$ at $z$. For every sincere socle-projective $\Bbbk S$-module $L$ (tabulated in Appendix \ref{sincererepsD}):
\begin{itemize}
\item[(a)] $d_L(u)=d_L(v)=1$; whenever some vertex of $S$ has $d_L=2$, the branch vertex $z$ is one of them.
\item[(b)] The vertices with $d_L(x)=2$ form a contiguous subgraph of $Q_S$ of type $\AA$ containing $z$, and the vertices with $d_L(x)=1$ other than $u,v$ also form a subgraph of $Q_S$ of type $\AA$.
\end{itemize}
This is observed directly in the tables of Appendix \ref{sincererepsD}, and it coincides with the general behavior of dimension vectors of indecomposable representations of a Dynkin quiver of type $\DD$ described in Remark \ref{applications}.
\end{remark}

\bibliographystyle{amsplain}

\end{document}